\newtheorem{theo}{Theorem}[section]
\newtheorem{lem}[theo]{Lemma}
\newtheorem{prop}[theo]{Proposition}
\newtheorem{cl}[theo]{Claim}
\newcommand{\address}[1]{\gdef\@address{#1}}
\newcommand{\email}[1]{\gdef\@email{\url{#1}}}
\newcommand{\@endstuff}{\par\vspace{\baselineskip}\noindent\small
\begin{tabular}{@{}l}\scshape\@address\\\textit{E-mail address:} \@email\end{tabular}}
\title{Rigidity Result for Four Dimensional Willmore Submanifolds with Boundary}
\author{Peter Olamide Olanipekun}
\address{Department of Mathematics, The University of Auckland, Auckland 1010, New Zealand.}
\email{peter.olanipekun@auckland.ac.nz }
\date{}
\begin{document}\sloppy
\maketitle
\abstract{We establish a rigidity result for the critical points, with boundary, of a four dimensional Willmore energy. These critical points satisfy a 4-Willmore equation which is a sixth order nonlinear elliptic partial differential equation. We establish several curvature estimates and prove that four dimensional Willmore submanifold with totally geodesic boundary condition are umbilic. }
\vskip2mm
\noindent
2010 Mathematics Subject Classification: 53C42, 35B33 , 35J30, 35J35 , 58J05 , 58K05.
\section{Introduction}
The goal of this present paper is to establish a rigidity result for the critical points of the four dimensional Willmore energy.   This four dimensional energy is relatively new in literature and has only been studied from analytical perspective in \cite{peterthesis} where a regularity result was presented. We believe that our results will encourage further studies of the four dimensional energy from both geometric and analytical perspectives (and even computational perspective), particularly, higher order flow of four dimensional Willmore submanifolds, stability of four dimensional Willmore submanifolds and energy quantization results amongst others. 

\vskip3mm
\noindent
Let $\vec{\Phi}:\Sigma\rightarrow\mathbb{R}^m$ be an immersion of a four dimensional manifold $\Sigma$.
Denote by $g$, $\vec{h}$ and $\vec{H}$ the first fundamental form, second fundamental form and the mean curvature vector, respectively, of $\Sigma$. The four dimensional Willmore energy is given by
\begin{align}
\mathcal{E}(\Sigma):= \int_\Sigma (|\pi_{\vec{n}}\nabla\vec{H}|^2 -|\vec{H}\cdot\vec{h}|^2 +7|\vec{H}|^4 ) \,d\mu
\end{align}
where $d\mu$ is the volume element. This energy\footnote{See Section 1.2.1 of \cite{peterthesis} and the references therein for a more detailed discussion.}  was first derived by Jemal Guven in \cite{guven}. It was later rediscovered by Robin-Graham and Reichert in \cite{robingraham} and also by Zhang in \cite{zhang} where higher Willmore energies were constructed via minimal submanifold asymptotics.  Examples of critical points of $\mathcal{E}(\Sigma)$ include minimal submanifolds, totally geodesic submanifolds of $\mathbb{R}^m$, round spheres $\mathbb{S}^4$ of $\mathbb{R}^m$ and product of spheres of radius $r$ such as: $\mathbb{S}^2 (r_1) \times \mathbb{S}^2 (r_2) $, $\mathbb{S}^1 (r_1) \times \mathbb{S}^3 (r_2) $, $\mathbb{S}^1 (r_1) \times \mathbb{S}^1 (r_2)\times \mathbb{S}^2 (r_3)   $ and $\mathbb{S}^1 (r_1) \times \mathbb{S}^1 (r_2) \times \mathbb{S}^1 (r_3) \times \mathbb{S}^1 (r_4)  $ where $\sum r^2_i =1$. These are solutions of the 4-Willmore equation which will be derived in Section \ref{will67} as the first variation of the energy $\mathcal{E}(\Sigma)$.

\vskip2mm
\noindent
The rigidity of several kinds of submanifolds has been widely studied in literature under different contexts. For instance, while some rigidity results for manifolds with bounded Ricci curvature were obtained  in \cite{anderson} other studies have focused on minimal submanifolds \cite{gqchen, chern, dorel,  coibre, lawson, reilly, jsimons}, critical points of the Willmore functional \cite {kuwert, tobias, tobiasl} and  hypersurfaces of constant weighted mean curvature \cite{ancari, cheng, shu}. 
In \cite{mmcoy}, McCoy and Wheeler considered surfaces $\Sigma$ immersed into $\mathbb{R}^3$ which are critical points of the  functional
$$\int_\Sigma |\nabla H|^2\, d\mu$$
and  whose second fundamental form satisfies the smallness condition 
$$\int_\Sigma |h|^2\, d\mu \leq \varepsilon$$
where $\varepsilon$ is a small universal constant. They obtained the following result.
\begin{theo}
Let $f:\Sigma \rightarrow\mathbb{R}^3$ be an immersion satisfying
$$\Delta^2H +|h|^2 \Delta H- (h_0)^{ij} \nabla_iH \nabla_j H=0$$
with the boundary conditions
$$|h|=0 \quad\mbox{and}\quad \nabla_\eta H= \nabla_\eta \Delta H=0.$$
If $f$ also satisfies $\int_\Sigma |h|^2 d\mu \leq \varepsilon$ for some sufficiently small $\varepsilon >0$, then the immersed surface $f(\Sigma) $ is part of a flat plane, where $\eta$ is the unit  conormal to the boundary of $\Sigma$.

\end{theo}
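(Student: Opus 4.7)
The plan is to multiply the Euler--Lagrange equation by $H$, integrate over $\Sigma$, and integrate by parts so that every boundary term produced is killed by the hypotheses $\nabla_\eta H=\nabla_\eta\Delta H=0$ together with $|h|=0$ on $\partial\Sigma$ (which in particular forces $H|_{\partial\Sigma}=0$). A double integration by parts on $\int_\Sigma H\,\Delta^2 H\, d\mu$ produces $\int_\Sigma(\Delta H)^2\,d\mu$ together with the two conormal boundary integrals $\int_{\partial\Sigma}H\,\nabla_\eta\Delta H$ and $\int_{\partial\Sigma}\Delta H\,\nabla_\eta H$, both of which vanish. What remains is the identity
\[
\int_\Sigma (\Delta H)^2\, d\mu \;=\; -\int_\Sigma H\,|h|^2\,\Delta H\, d\mu \;+\; \int_\Sigma H\,(h_0)^{ij}\nabla_iH\,\nabla_jH\,d\mu,
\]
and the game is to estimate the cubic right-hand side by a fraction of the left.

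Using $|H|\le C|h|$, $|h_0|\le|h|$, Cauchy--Schwarz, and the Michael--Simon Sobolev inequality on the submanifold, each of the two integrals on the right is controlled by $C\,\|h\|_{L^2}^{\alpha}$ (for some $\alpha>0$) times $\|\Delta H\|_{L^2}^2+\|\nabla h\|_{L^2}^2$ plus absorbable lower-order terms. For instance, after one integration by parts the second integral is dominated by $\||h||\nabla H|^2\|_{L^1}$, which via H\"older and the surface Sobolev embedding is bounded by $C\,\|h\|_{L^2}\,(\|\nabla^2H\|_{L^2}^2+\|\nabla H\|_{L^2}^2)$; Simons' identity $\Delta h_{ij}=\nabla_i\nabla_j H+H(h^2)_{ij}-|h|^2h_{ij}$ then relates $\|\nabla^2 H\|_{L^2}$ to $\|\Delta H\|_{L^2}$ modulo quartic error. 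The smallness assumption $\|h\|_{L^2}^2\le\varepsilon$ allows absorption into the left once $\varepsilon$ is fixed small, and the identity collapses to $\Delta H\equiv 0$ on $\Sigma$. Since $H|_{\partial\Sigma}=0$, uniqueness for the Dirichlet problem for harmonic functions gives $H\equiv 0$, so the immersion is minimal.

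It remains to upgrade $H\equiv 0$ to $h\equiv 0$. On a minimal surface in $\mathbb{R}^3$, Simons' identity simplifies to $\tfrac12\Delta|h|^2=|\nabla h|^2-|h|^4$; integrating and observing that $\int_{\partial\Sigma}\nabla_\eta|h|^2=0$ because $h|_{\partial\Sigma}=0$ yields
\[
\int_\Sigma |\nabla h|^2\,d\mu \;=\; \int_\Sigma |h|^4\,d\mu.
\]
A further application of Michael--Simon together with H\"older produces $\int|h|^4\le C\,\|h\|_{L^2}^2\int|\nabla h|^2$, and the smallness hypothesis forces $\nabla h\equiv 0$, hence $h\equiv 0$, so $f(\Sigma)$ is a portion of a flat plane.

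The principal obstacle is the bookkeeping of boundary contributions. The Michael--Simon Sobolev inequality on a submanifold with boundary carries an extra $\int_{\partial\Sigma}|f|$ term beyond the closed case, and at each integration by parts one must verify that the stated boundary data $|h|=0$, $\nabla_\eta H=0$, $\nabla_\eta\Delta H=0$ collectively annihilate every such contribution. A secondary issue is the universality of the threshold $\varepsilon$: each Sobolev constant must be tracked to ensure it is independent of the immersion, which is exactly what Michael--Simon provides but which must be preserved through every H\"older and interpolation step so that no circular dependence on $\|h\|_{L^2}$ re-enters the absorption argument.
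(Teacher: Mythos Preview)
The paper does not prove this statement. Theorem~1.1 is quoted verbatim from McCoy--Wheeler \cite{mmcoy} as background motivation; the paper's own contribution is the four-dimensional analogue (Theorem~1.2), and no argument for the two-dimensional result is given or sketched anywhere in the text. There is therefore nothing in the paper to compare your proposal against.

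That said, two remarks on your sketch, in light of how the paper handles its own theorem. First, the paper's proof of Theorem~1.2 (and, from context, the McCoy--Wheeler argument it emulates) works via compactly supported cut-off functions $\gamma$ with $\|\nabla\gamma\|_\infty\le c/\rho$ and then lets $\rho\to\infty$; this is what justifies every integration by parts and replaces your appeal to ``uniqueness for the Dirichlet problem'' in step~5. Your write-up integrates over all of $\Sigma$ without localisation, and the passage from $\Delta H\equiv 0$ with $H|_{\partial\Sigma}=0$ to $H\equiv 0$ is not available on a noncompact $\Sigma$ without some such device or an a priori decay hypothesis. Second, the same cut-off machinery is what makes the Michael--Simon step clean: the boundary contribution in the Sobolev inequality is killed because the test function is $|\,\cdot\,|\gamma^{p/2}$ with $\gamma$ compactly supported in the interior (or because $h=0$ on $\partial\Sigma$), not merely because the raw boundary data vanish. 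Your two flagged ``obstacles'' are real, and the standard resolution is exactly the cut-off scheme the paper employs for its four-dimensional theorem.
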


\vskip3mm
Our main result is the following rigidity theorem for critical points of the energy $\mathcal{E}(\Sigma)$. 
\begin{theo} \label{mainr}
Let $\vec{\Phi}:\Sigma\rightarrow \mathbb{R}^m$ be an immersion of a 4-dimensional manifold $\Sigma$ satisfying $\int_\Sigma |\vec{h}|^2 d\mu\leq \varepsilon$ and $\int_\Sigma |\vec{h}|^4 d\mu\leq \varepsilon$ for some sufficiently small $\varepsilon >0$. If $\vec{\Phi}$ also satisfies
\begin{align}
\vec{\mathcal{W}}&=\vec{0} \label{654t}
\end{align}
together with the boundary conditions 
\begin{align}
\pi_{\vec{\eta}}\nabla\Delta_\perp \vec{H}=\pi_{\vec{\eta}}\nabla \vec{H}=\vec{0} \quad\mbox{and}\quad \vec{h}=\vec{0}    \label{bdry}
\end{align}
where
\begin{align}
\vec{\mathcal{W}}&:=-\frac{1}{2}\Delta^2_{\perp}\vec{H}-\frac{1}{2}(\vec{h}_{ik}\cdot\Delta_{\perp}\vec{H})\vec{h}^{ik}-4|\pi_{\vec{n}}\nabla\vec{H}|^2\vec{H} +2\pi_{\vec{n}}\nabla_j((\vec{h}^j_i\cdot\nabla^i\vec{H})\vec{H})
\nonumber
\\& \quad
-2\pi_{\vec{n}}\nabla_j((\vec{H}\cdot\vec{h}^j_i)\pi_{\vec{n}}\nabla^i\vec{H})
+2(\pi_{\vec{n}}\nabla_i\vec{H}\cdot\pi_{\vec{n}}\nabla_j\vec{H})\vec{h}^{ij}
\nonumber
                                \\&\quad-\frac{1}{2}\Delta_{\perp}((\vec{H}\cdot\vec{h}^{ij})\vec{h}_{ij})-2\pi_{\vec{n}}\nabla_i\nabla_k((\vec{H}\cdot\vec{h}^{ik})\vec{H}) -28|\vec{H}|^4\vec{H}                                                 \nonumber
\\&\quad      -\frac{1}{2}(\vec{H}\cdot\vec{h}^{ij})(\vec{h}_{ij}\cdot \vec{h}_{pq})\vec{h}^{pq}     -4(\vec{H}\cdot\vec{h}_{ij})(\vec{H}\cdot\vec{h}^i_k)\vec{h}^{jk}  +4|\vec{H}\cdot\vec{h}|^2\vec{H}  \nonumber
\\&\quad   +7 \Delta_\perp(|\vec{H}|^2\vec{H})  +7|\vec{H}|^2(\vec{H}\cdot\vec{h}_{ij})\vec{h}^{ij} \nonumber 
\end{align}
then the submanifold $\Sigma$ is umbilic with totally geodesic boundary. 
\end{theo}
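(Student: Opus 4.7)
The strategy I would pursue is an integration-by-parts and absorption argument in the spirit of the McCoy--Wheeler rigidity theorem for the third-order polyharmonic surface energy in $\mathbb{R}^3$, but adapted to the sixth-order operator $\vec{\mathcal{W}}$ and to arbitrary codimension. The underlying philosophy is to pair $\vec{\mathcal{W}}=\vec{0}$ with a carefully chosen curvature quantity, integrate by parts until a single coercive leading term dominates, and then use the smallness hypotheses $\int_\Sigma|\vec{h}|^2\,d\mu\le\varepsilon$ and $\int_\Sigma|\vec{h}|^4\,d\mu\le\varepsilon$ together with a Michael--Simon Sobolev inequality on four-submanifolds to absorb all remaining nonlinear cross-terms.

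Concretely, I would first form the $\mathbb{R}^m$-valued pairing $\vec{\mathcal{W}}\cdot\vec{H}$, integrate over $\Sigma$, and then integrate by parts twice on the top-order piece $-\tfrac{1}{2}\Delta_\perp^2\vec{H}\cdot\vec{H}$ to extract the coercive quantity $\tfrac{1}{2}\int_\Sigma|\Delta_\perp\vec{H}|^2\,d\mu$. The boundary contributions from this double integration by parts are controlled exactly by $\pi_{\vec{\eta}}\nabla\Delta_\perp\vec{H}$ and $\pi_{\vec{\eta}}\nabla\vec{H}$ along $\partial\Sigma$, both of which vanish by (\ref{bdry}); the boundary integrals produced by integrating by parts on the remaining eight nonlinear pieces of $\vec{\mathcal{W}}$ all carry a factor of $\vec{h}|_{\partial\Sigma}$, which also vanishes. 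One is then left with a purely interior identity relating $\int_\Sigma|\Delta_\perp\vec{H}|^2$, $\int_\Sigma|\vec{H}|^6$, $\int_\Sigma|\vec{H}|^2|\nabla|\vec{H}|^2|^2$, and a collection of cross-terms of the schematic form $|\vec{h}|^a|\vec{H}|^b|\nabla^c\vec{H}|^d$ of total scaling degree six.

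Each cross-term would then be estimated by isolating a factor of $|\vec{h}|$ in either $L^2$ or $L^4$ via H\"older's inequality, controlling the remaining Sobolev factor by Michael--Simon on four-submanifolds (or a Gagliardo--Nirenberg interpolation up to the fourth derivative of $\vec{H}$), and using the smallness assumptions to gain a factor of $\varepsilon$. For sufficiently small $\varepsilon$ this absorption forces $\Delta_\perp\vec{H}\equiv\vec{0}$ and, on iterating with a second test against $\Delta_\perp\vec{H}$, also $\pi_{\vec{n}}\nabla\vec{H}\equiv\vec{0}$. The equation $\vec{\mathcal{W}}=\vec{0}$ then collapses to an algebraic identity, which I would contract with the trace-free second fundamental form $\vec{h}_0:=\vec{h}-\tfrac{1}{4}g\,\vec{H}$, or equivalently combine with a Simons-type identity for $|\vec{h}_0|^2$, to derive an inequality of the form $\int_\Sigma|\vec{h}_0|^2\,d\mu\le C\varepsilon\int_\Sigma|\vec{h}_0|^2\,d\mu$ and hence $\vec{h}_0\equiv\vec{0}$; this is umbilicity, while the totally geodesic boundary is already built into (\ref{bdry}).

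The principal obstacle lies in the bookkeeping of the first step: $\vec{\mathcal{W}}$ contains nine distinct nonlinear terms with up to four orders of normal covariant derivative on $\vec{H}$, and each integration by parts on a four-submanifold with nontrivial normal bundle generates Gauss, Codazzi, and normal-Ricci commutators which must be tracked throughout. Identifying the precise cancellations that leave exactly one coercive leading quadratic form, and at the same time ensuring that no boundary contribution of order higher than the prescribed Navier-type data in (\ref{bdry}) survives, is where the bulk of the technical effort will sit; once this identity is secured cleanly, the absorption and Simons-type steps follow a rather standard template.
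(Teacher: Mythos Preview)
Your overall architecture --- pair $\vec{\mathcal{W}}$ with $\vec H$, integrate by parts, kill the boundary terms via \eqref{bdry}, and absorb the nonlinearities using Michael--Simon together with the smallness of $\|\vec h\|_{L^2}$ and $\|\vec h\|_{L^4}$ --- matches the paper's framework. The divergence from the paper, and the genuine gap, is your \emph{sequential} plan: first force $\Delta_\perp\vec H\equiv 0$, then $\pi_{\vec n}\nabla\vec H\equiv 0$, then reduce $\vec{\mathcal{W}}=\vec 0$ to an algebraic identity in $\vec h_0$.

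This sequence does not close. After integrating $\vec{\mathcal{W}}\cdot\vec H$ by parts, the cross-terms include quantities such as $\int|\vec H|^2|\pi_{\vec n}\nabla\vec H|^2$ and $\int|\vec h_0|^2|\pi_{\vec n}\nabla\vec h_0|^2$. When you apply Michael--Simon to these, what comes out is the full normal Hessian $\int|\pi_{\vec n}\nabla_{(2)}\vec H|^2$ (or of $\vec h$), not $\int|\Delta_\perp\vec H|^2$; you cannot absorb back into $\int|\Delta_\perp\vec H|^2$ alone, so the claimed conclusion $\Delta_\perp\vec H\equiv 0$ is unsupported. Your ``second test against $\Delta_\perp\vec H$'' is then vacuous, and even granting $\pi_{\vec n}\nabla\vec H\equiv 0$, the operator $\vec{\mathcal{W}}$ still contains second normal derivatives of $\vec h$ (through $\Delta_\perp((\vec H\cdot\vec h^{ij})\vec h_{ij})$ and $\pi_{\vec n}\nabla_i\nabla_k((\vec H\cdot\vec h^{ik})\vec H)$), so it does not collapse to an algebraic relation.

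The paper avoids this by never isolating $\Delta_\perp\vec H$. It first upgrades $\int|\Delta_\perp\vec H|^2$ to $\int|\pi_{\vec n}\nabla_{(2)}\vec H|^2$ via a Bochner-type commutation (Lemma~\ref{laks2}), then uses a Simons identity for $\vec h_0$ (Claim~\eqref{simonid}) to further upgrade to the composite coercive quantity
\[
\int_\Sigma\bigl(|\pi_{\vec n}\nabla_{(2)}\vec h|^2+|\vec h|^2|\pi_{\vec n}\nabla\vec h|^2+|\vec h|^4|\vec h_0|^2\bigr)\gamma^p\,d\mu,
\]
and only then performs the Michael--Simon absorption (Lemmas~\ref{b2}--\ref{con1}). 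All of this is done against a cut-off $\gamma=\tilde\gamma\circ\vec\Phi$ with $\|\nabla\gamma\|_\infty\le c/\rho$; the residual terms after absorption all carry powers of $c_\gamma$ multiplied by $\int|\vec h|^2$ or $\int|\vec h|^4$, and the conclusion $\vec h_0\equiv 0$ comes from sending $\rho\to\infty$. Your sketch omits the cut-off entirely, which is what actually delivers the vanishing at the end and is needed to justify the integrations by parts when $\Sigma$ is non-compact.
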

\noindent
Note that the Willmore equation \eqref{654t} is a sixth order nonlinear partial differential equation.
\noindent
The remaining part of this paper is organised as follows. Notations and brief details are given Section \ref{two84673}.  Section \ref{will67} is devoted to obtaining the 4-Willmore equation by computing the normal variation of $\mathcal{E}(\Sigma)$. In Section \ref{three0009}, we obtain several curvature estimates needed to prove the main result of this article.

\section{Preliminaries}\label{two84673}

Let $\vec{\Phi}: \Sigma \rightarrow \mathbb{R}^m$ be a smooth immersion of a manifold $\Sigma$ into $\mathbb{R}^m$. The induced metric $g$ on $\Sigma$ is given by 
$$g_{ij}:= \nabla_i\vec{\Phi}\cdot\nabla_j\vec{\Phi}$$
where $\nabla_{(.)}$ is a covariant derivative associate with $g$ and $g^{ij}$ is the inverse metric. Note that $g^{ij}g_{ij}=4=\textnormal{dim}(\Sigma)$. We append arrows on vector quantities and use a dot to   denote the scalar product of vectors in $\mathbb{R}^m$. The volume element on $\Sigma$ will be denoted by $$d\mu:= dvol_g= |g|^{1/2} dx$$ and on the boundary $\partial \Sigma$ by $d\sigma$. The components of the second fundamental form of $\Sigma$
 are given  by
$$\vec{h}_{ij}:=\nabla_i\nabla_j\vec{\Phi}$$
from which we find the mean curvature vector $4\vec{H}=g^{ij}\vec{h}_{ij}=\vec{h}_j^j$.
We denote by $\pi_{\vec{n}}\vec{X}$ and $\pi_T\vec{X}$ the normal and tangential projections of a vector $\vec{X}$ respectively and we have $\pi_{\vec{n}}\vec{X}+\pi_T\vec{X}=\vec{X}$. We will often decompose vectors into normal and tangential components.
For a vector $\vec{X}$, we denote by 
$$\Delta_\perp \vec{X}:=g^{ij}\pi_{\vec{n}}\nabla_i\pi_{\vec{n}}\nabla_j\vec{X}$$  the negative covariant Laplacian for the Levi-Civita connection in the normal bundle. We also denote 
$$\Delta_\perp^2 \vec{X}:=g^{kl}g^{ij}\pi_{\vec{n}}\nabla_k\pi_{\vec{n}}\nabla_l\pi_{\vec{n}}\nabla_i\pi_{\vec{n}}\nabla_j\vec{X}.$$

\noindent
The tracefree second fundamental form of $\Sigma$ is the symmetric tensor $\vec{h}_0$ whose components are given by
$$(\vec{h}_0)_{ij}=\vec{h}_{ij}-g_{ij}\vec{H}.$$

\noindent
The tracefree second fundamental form  satisfies
$$\pi_{\vec{n}}\nabla^i(\vec{h}_0)_{ij}=\pi_{\vec{n}}\nabla^i\vec{h}_{ij}-\pi_{\vec{n}}\nabla_j\vec{H} = 3\pi_{\vec{n}}\nabla_j\vec{H}$$
where we have used the Codazzi-Mainardi equations
$$\pi_{\vec{n}}\nabla_i\vec{h}_{jk}=\pi_{\vec{n}}\nabla_j\vec{h}_{ik}= \pi_{\vec{n}}\nabla_k\vec{h}_{ij}.$$

\noindent
We now introduce the cut-off function that will be needed whenever we are integrating by parts.  Let $\vec{\Phi}:\Sigma\rightarrow \mathbb{R}^m$ be our immersion and let $\tilde \gamma:\mathbb{R}^m\rightarrow [0,1]$ be a $C^1$ function with compact support, we take the composition 
\begin{align}
\gamma= \tilde \gamma \circ \vec{\Phi}: \Sigma \rightarrow [0,1] \label{cutoff}
\end{align}
with the assumption that $\|\nabla\gamma\|_{L^\infty}\leq c_\gamma$. We set $c_\gamma=\frac{c}{\rho}$ where $c$ is an absolute constant and $\rho$ will be specified later.

\section{The 4-Willmore equation} \label{will67}
We derive the normal variation of the energy $\mathcal{E}(\Sigma)$.
\begin{theo}
Let $\vec{\Phi}:\Sigma\rightarrow \mathbb{R}^m$ be an immersion of a 4-dimensional manifold $\Sigma$. Let $\vec{B}:\Sigma \rightarrow \mathbb{R}^m$ be a smooth normal variation of $\vec{\Phi}$. The first variation of the energy is given by
\begin{align}
\tilde\delta \mathcal{E}(\Sigma)=     \int_\Sigma \vec{B}\cdot \vec{\mathcal{W}}\,\, d\mu + \int_{\partial \Sigma} V\, d\sigma
\end{align}
where
\begin{align}
\vec{\mathcal{W}}&:=-\frac{1}{2}\Delta^2_{\perp}\vec{H}-\frac{1}{2}(\vec{h}_{ik}\cdot\Delta_{\perp}\vec{H})\vec{h}^{ik}-4|\pi_{\vec{n}}\nabla\vec{H}|^2\vec{H} +2\pi_{\vec{n}}\nabla_j((\vec{h}^j_i\cdot\nabla^i\vec{H})\vec{H})
\nonumber
\\& \quad
-2\pi_{\vec{n}}\nabla_j((\vec{H}\cdot\vec{h}^j_i)\pi_{\vec{n}}\nabla^i\vec{H})
+2(\pi_{\vec{n}}\nabla_i\vec{H}\cdot\pi_{\vec{n}}\nabla_j\vec{H})\vec{h}^{ij}
\nonumber
                                \\&\quad-\frac{1}{2}\Delta_{\perp}((\vec{H}\cdot\vec{h}^{ij})\vec{h}_{ij})-2\pi_{\vec{n}}\nabla_i\nabla_k((\vec{H}\cdot\vec{h}^{ik})\vec{H}) -28|\vec{H}|^4\vec{H}                                                 \nonumber
\\&\quad      -\frac{1}{2}(\vec{H}\cdot\vec{h}^{ij})(\vec{h}_{ij}\cdot \vec{h}_{pq})\vec{h}^{pq}     -4(\vec{H}\cdot\vec{h}_{ij})(\vec{H}\cdot\vec{h}^i_k)\vec{h}^{jk}  +4|\vec{H}\cdot\vec{h}|^2\vec{H}  \nonumber
\\&\quad   +7 \Delta_\perp(|\vec{H}|^2\vec{H})  +7|\vec{H}|^2(\vec{H}\cdot\vec{h}_{ij})\vec{h}^{ij} \nonumber
\end{align}
and 
\begin{align}
V^j&=  \frac{1}{2} (\nabla^j\vec{H}\cdot \vec{h}^{ik})(\vec{B}\cdot\vec{h}_{ik}) +\frac{1}{2} \nabla^j\vec{H}\cdot\Delta_\perp \vec{B} -2(\nabla^i\vec{H}\cdot\vec{h}_i^j)(\vec{H}\cdot\vec{B})                   \nonumber
\\&\quad+2(\nabla^i\vec{H}\cdot\vec{B})(\vec{H}\cdot\vec{h}_i^j)   -\frac{1}{2} \Delta_\perp\vec{H}\cdot\nabla^j\vec{B} +\frac{1}{2}\nabla^j\Delta_\perp\vec{H}\cdot\vec{B}   \nonumber
\\&
-\frac{1}{2}(\vec{H}\cdot\vec{h}^{ik})(\vec{h}_{ik}\cdot\nabla^j\vec{B})             +\frac{1}{2}\pi_{\vec{n}}\nabla^j((\vec{H}\cdot\vec{h}^{ik})\vec{h}_{ik})\cdot\vec{B} -2(\vec{H}\cdot\vec{h}^{ij})(\vec{H}\cdot\nabla_i\vec{B})     \nonumber
\\&\quad +2\pi_{\vec{n}}\nabla_i((\vec{H}\cdot\vec{h}^{ij})\vec{H})\cdot\vec{B} 
+7|\vec{H}|^2\vec{H}\cdot\nabla^j\vec{B} -7\pi_{\vec{n}}\nabla^j(|\vec{H}|^2\vec{H})\cdot\vec{B}. \nonumber
\end{align}

\end{theo}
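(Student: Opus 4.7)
The plan is to compute $\tilde\delta\mathcal{E}(\Sigma)=\frac{d}{dt}\big|_{t=0}\mathcal{E}(\Sigma_t)$ along the normal flow $\vec{\Phi}_t=\vec{\Phi}+t\vec{B}$, apply the product rule to the three summands of the integrand, and then integrate by parts repeatedly until every derivative has been moved off $\vec{B}$, so that the interior integrand is linear in $\vec{B}$ (identifying $\vec{\mathcal{W}}$) and all boundary contributions accumulate into $V$.

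First, I would record the standard normal-variation formulas: $\tilde\delta g_{ij}=-2\vec{B}\cdot\vec{h}_{ij}$, $\tilde\delta g^{ij}=2\vec{B}\cdot\vec{h}^{ij}$, and $\tilde\delta(d\mu)=-4\vec{H}\cdot\vec{B}\,d\mu$; the normal variation of the second fundamental form $\pi_{\vec{n}}\tilde\delta\vec{h}_{ij}=\pi_{\vec{n}}\nabla_i\pi_{\vec{n}}\nabla_j\vec{B}-(\vec{B}\cdot\vec{h}^k_i)\vec{h}_{kj}$; and, by tracing with $\tfrac14 g^{ij}$, a formula for $\tilde\delta\vec{H}$ whose leading piece is $\tfrac14\Delta_\perp\vec{B}$ plus zeroth-order terms polynomial in $\vec{B}$, $\vec{h}$, $\vec{H}$. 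One also needs the commutator identity $\pi_{\vec{n}}\tilde\delta(\pi_{\vec{n}}\nabla_i\vec{H})=\pi_{\vec{n}}\nabla_i\tilde\delta\vec{H}+$ (lower-order corrections from $\tilde\delta\Gamma$ and from varying the projector $\pi_{\vec{n}}$).

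Next, I would differentiate $g^{ij}(\pi_{\vec{n}}\nabla_i\vec{H})\cdot(\pi_{\vec{n}}\nabla_j\vec{H})$, $g^{ij}g^{kl}(\vec{H}\cdot\vec{h}_{ik})(\vec{H}\cdot\vec{h}_{jl})$ and $(\vec{H}\cdot\vec{H})^2$ against $d\mu$. The leading contribution $\pi_{\vec{n}}\nabla^j\vec{H}\cdot\pi_{\vec{n}}\nabla_j\tilde\delta\vec{H}$ becomes, after two integrations by parts, $-\tfrac12\Delta_\perp^2\vec{H}\cdot\vec{B}$, which matches the sixth-order term and fixes the order of the operator. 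Variation of the $|\vec{H}\cdot\vec{h}|^2$ term gives (after one or two integrations by parts) the double divergence $-2\pi_{\vec{n}}\nabla_i\nabla_k((\vec{H}\cdot\vec{h}^{ik})\vec{H})\cdot\vec{B}$, the Laplacian term $-\tfrac12\Delta_\perp((\vec{H}\cdot\vec{h}^{ij})\vec{h}_{ij})\cdot\vec{B}$, and the cubic pointwise terms $-\tfrac12(\vec{H}\cdot\vec{h}^{ij})(\vec{h}_{ij}\cdot\vec{h}_{pq})\vec{h}^{pq}$, $-4(\vec{H}\cdot\vec{h}_{ij})(\vec{H}\cdot\vec{h}^i_k)\vec{h}^{jk}$, $+4|\vec{H}\cdot\vec{h}|^2\vec{H}$. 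The $7|\vec{H}|^4$ piece yields $-28|\vec{H}|^4\vec{H}+7\Delta_\perp(|\vec{H}|^2\vec{H})+7|\vec{H}|^2(\vec{H}\cdot\vec{h}_{ij})\vec{h}^{ij}$. Each integration by parts drops a divergence, and these assemble into the stated $V^j$.

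The main obstacle is organizational rather than conceptual: many intermediate tangential contributions arise, both from $\tilde\delta g$ and from the variation of the projector $\pi_{\vec{n}}$ inside $\pi_{\vec{n}}\nabla_i\vec{H}$, and they must be grouped so that cross terms cancel and $\vec{\mathcal{W}}$ ends up normal-valued. Applying Codazzi--Mainardi $\pi_{\vec{n}}\nabla_i\vec{h}_{jk}=\pi_{\vec{n}}\nabla_k\vec{h}_{ij}$ collapses a number of mixed derivatives, and the Ricci identity in the normal bundle must be used when commuting $\pi_{\vec{n}}\nabla$'s to produce $\Delta_\perp^2\vec{H}$ from iterated gradients of $\tilde\delta\vec{H}$. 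Careful bookkeeping of up to four successive integrations by parts (driven by the two Laplacians in $\Delta_\perp^2$) then produces the boundary divergence $V^j$ in exactly the displayed form.
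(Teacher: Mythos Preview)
Your proposal is correct and follows essentially the same strategy as the paper: record the standard variation formulas for $g_{ij}$, $d\mu$, $\vec h_{ij}$ and $\vec H$, vary each of the three summands of $\mathcal E(\Sigma)$ separately, and integrate by parts until every derivative is off $\vec B$, collecting the interior piece into $\vec{\mathcal W}=\vec{\mathcal W}_1-\vec{\mathcal W}_2+7\vec{\mathcal W}_3$ and the divergences into $V=V_1-V_2+7V_3$. One small simplification relative to your outline: the paper never invokes a Ricci (curvature) commutator in the normal bundle---it works directly with $\pi_{\vec n}\tilde\delta\vec H=\tfrac14\big((\vec B\cdot\vec h^{ij})\vec h_{ij}+\Delta_\perp\vec B\big)$ and handles $\Delta_\perp\vec H\cdot\Delta_\perp\vec B$ by two straightforward integrations by parts, so no derivative-swapping identity is needed to reach $\Delta_\perp^2\vec H$.
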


\begin{proof}

We begin with the following straightforward variations  (c.f \cite{peterthesis}). We have
\begin{align}
&\tilde\delta g_{ij}=  -2\vec{B}\cdot\vec{h}_{ij} \,,
 \quad\quad \tilde\delta g^{ij}=2\vec{B}\cdot\vec{h}^{ij} \,,
\quad\quad  \tilde\delta|g|^{1/2}= -4|g|^{1/2}\vec{B}\cdot\vec{H}\,,   \label{evol1}
\\
&\pi_{\vec{n}}\tilde\delta \vec{h}_{ij}= \pi_{\vec{n}} \nabla_i\nabla_j \vec{B} \,, 
\quad\quad\mbox{and}\quad\pi_{\vec{n}}\tilde\delta\vec{H}=\frac{1}{4}((\vec{B}\cdot\vec{h}^{ij})\vec{h}_{ij}+\Delta_{\perp}\vec{B}).    \label{evol2}
\end{align}

\noindent
We compute the variation of each term of $\mathcal{E}(\Sigma)$. 
 \begin{align}
&\tilde\delta\int_\Sigma|\pi_{\vec{n}}\nabla\vec{H}|^2 d\mu = \tilde\delta\int_\Sigma g^{ij}\pi_{\vec{n}}\nabla_i\vec{H} \cdot\pi_{\vec{n}}\nabla_j\vec{H} \,|g|^{1/2}\, dx         \nonumber
\\&=2\int_{\Sigma} (\vec{B}\cdot\vec{h}^{ij}) \pi_{\vec{n}}\nabla_i\vec{H} \cdot\pi_{\vec{n}}\nabla_j\vec{H} \,d\mu +2\int_\Sigma \pi_{\vec{n}}\nabla^j\vec{H} \cdot\tilde\delta\pi_{\vec{n}}\nabla_j\vec{H} \,d\mu -4\int_{\Sigma} |\pi_{\vec{n}}\nabla\vec{H}|^2(\vec{B}\cdot\vec{H})\,  d\mu    \label{middle1}
\end{align}
\noindent
By splitting into tangential and normal components, the middle term in \eqref{middle1} can be simplified as follows.
\begin{align}
\pi_{\vec{n}}\nabla^j\vec{H} \cdot\tilde\delta\pi_{\vec{n}}\nabla_j\vec{H} &=  \pi_{\vec{n}}\nabla^j\vec{H} \cdot\tilde\delta\nabla_j\vec{H} -\pi_{\vec{n}}\nabla^j\vec{H} \cdot\tilde\delta\pi_{T}\nabla_j\vec{H}\nonumber
\\&= \pi_{\vec{n}}\nabla^j\vec{H} \cdot\nabla_j\tilde\delta\vec{H} -   \pi_{\vec{n}}\nabla^j\vec{H} \cdot\tilde\delta((\nabla_j\vec{H}\cdot \nabla_k\vec{\Phi}) \nabla^k\vec{\Phi} )       \nonumber
\\&=  \pi_{\vec{n}}\nabla^j\vec{H} \cdot\nabla_j\tilde\delta\vec{H} + \pi_{\vec{n}}\nabla^j\vec{H} \cdot\tilde\delta((\vec{H}\cdot \vec{h}_{jk}\vec{\Phi}) \nabla^k\vec{\Phi} )       \nonumber
\\&=   \pi_{\vec{n}}\nabla^j\vec{H} \cdot\nabla_j(\pi_{\vec{n}}\tilde\delta\vec{H} +\pi_{T}\tilde\delta\vec{H}) + \pi_{\vec{n}}\nabla^j\vec{H} \cdot((\vec{H}\cdot \vec{h}_{jk}\vec{\Phi}) \tilde\delta\nabla^k\vec{\Phi} )     .        \label{sothatt}
\end{align}
\noindent
Note that
\begin{align}
\pi_{\vec{n}}\nabla_j\pi_{T} \tilde\delta\vec{H}& =   \pi_{\vec{n}}\nabla_j((\tilde\delta\vec{H}\cdot\nabla_k\vec{\Phi}) \nabla^k\vec{\Phi})
=   -  \pi_{\vec{n}}\nabla_j((\vec{H}\cdot\tilde\delta\nabla_k\vec{\Phi}) \nabla^k\vec{\Phi}) =-\pi_{\vec{n}}\nabla_j((\vec{H}\cdot\nabla_k\vec{B}) \nabla^k\vec{\Phi})  \nonumber
\\&= -(\vec{H}\cdot\nabla_k\vec{B}) \vec{h}_{j}^k   . \nonumber
 \end{align}

\noindent
Calling upon \eqref{evol2} and the latter, equation \eqref{sothatt} becomes
\begin{align}
\pi_{\vec{n}}\nabla^j\vec{H} \cdot\tilde\delta\pi_{\vec{n}}\nabla_j\vec{H} &=  \frac{1}{4} \pi_{\vec{n}} \nabla^j\vec{H} \cdot\nabla_j\left( (\vec{B}\cdot\vec{h}^{ik})\vec{h}_{ik} +\Delta_\perp\vec{B}   \right)      \nonumber
\\& -(\pi_{\vec{n}}\nabla^j\vec{H} \cdot\vec{h}_j^k) (\vec{H}\cdot\nabla_k\vec{B})  +(\pi_{\vec{n}}\nabla^j\vec{H}\cdot\nabla_k\vec{B}) (\vec{H}\cdot\vec{h}_j^k):= \textnormal{I}+\textnormal{II} +\textnormal{III}.\label{sub1}
\end{align}
\noindent
Now we have
\begin{align}
\textnormal{I}&=   \frac{1}{4}\nabla_j \left( \pi_{\vec{n}} \nabla^j\vec{H}\cdot\vec{h}_{ik}(\vec{B}\cdot\vec{h}^{ik})  +\pi_{\vec{n}}\nabla^j\vec{H}\cdot\Delta_\perp\vec{B}  \right)      
- \frac{1}{4}(\Delta_\perp\vec{H}\cdot\vec{h}_{ik}) (\vec{B}\cdot\vec{h}^{ik})  -\frac{1}{4} \Delta_\perp\vec{H}\cdot \Delta_\perp \vec{B}. \label{sub2}
\end{align}

\noindent
Note that
\begin{align}
\Delta_\perp\vec{H}\cdot \Delta_\perp \vec{B} &= \Delta_\perp\vec{H}\cdot \pi_{\vec{n}}\nabla_i \pi_{\vec{n}}\nabla^i \vec{B}        \nonumber
= \nabla_i(\Delta_\perp\vec{H} \cdot\nabla^i\vec{B}) -\nabla_i\Delta_\perp\vec{H} \cdot\pi_{\vec{n}}\nabla^i\vec{B}   \nonumber
\\&=  \nabla_i(\Delta_\perp\vec{H} \cdot\nabla^i\vec{B}-\pi_{\vec{n}}\nabla_i\Delta_\perp \vec{H}\cdot \vec{B)} +\vec{B}\cdot\Delta_\perp^2\vec{H}   .       \label{sub3}
\end{align}

\noindent
Similar computation gives
\begin{align}
\textnormal{II} +\textnormal{III}& = \nabla_k\left(-(\nabla^j\vec{H}\cdot\vec{h}^k_j)(\vec{H}\cdot\vec{B} ) +(\nabla^j\vec{H}\cdot\vec{B}) (\vec{H}\cdot\vec{h}^k_j) \right)   \nonumber
\\&\quad + \vec{B}\cdot\nabla_k\left((\nabla^j\vec{H}\cdot \vec{h}^k_j)\vec{H}  -(\vec{H}\cdot\vec{h}^k_j)\pi_{\vec{n}}\nabla^j\vec{H}\right)    \label{sub4}
\end{align}

\noindent
Substituting \eqref{sub3} into \eqref{sub2} and using \eqref{sub1} in \eqref{middle1}, we have
\begin{align}
\tilde\delta\int_\Sigma|\pi_{\vec{n}}\nabla\vec{H}|^2 d\mu &= \int_\Sigma \vec{B}\cdot \mathcal{\vec{W}}_1 + \nabla_j V_1^j d\mu \nonumber
\end{align}
where
\begin{align}
\mathcal{\vec{W}}_1&:= -\frac{1}{2} \Delta_\perp \vec{H}\cdot\vec{h}_{ik} \vec{h}^{ik} +2\pi_{\vec{n}} \nabla_k((\nabla^j\vec{H}\cdot\vec{h}_j^k)\vec{H}) -2\pi_{\vec{n}}\nabla_k((\vec{H}\cdot \vec{h}^k_j)\pi_{\vec{n}}\nabla^j\vec{H})   \nonumber
\\& \quad-\frac{1}{2} \Delta_\perp ^2\vec{H} +2(\pi_{\vec{n}}\nabla_i\vec{H}\cdot\pi_{\vec{n}}\nabla_j\vec{H})\vec{h}^{ij}
-4|\pi_{\vec{n}}\nabla\vec{H}|^2\vec{H} \nonumber
\end{align}
and
\begin{align}
V_1^j&:= \frac{1}{2} (\nabla^j\vec{H}\cdot \vec{h}^{ik})(\vec{B}\cdot\vec{h}_{ik}) +\frac{1}{2} \nabla^j\vec{H}\cdot\Delta_\perp \vec{B} -2(\nabla^i\vec{H}\cdot\vec{h}_i^j)(\vec{H}\cdot\vec{B})                   \nonumber
\\&\quad +2(\nabla^i\vec{H}\cdot\vec{B})(\vec{H}\cdot\vec{h}_i^j) -\frac{1}{2} \Delta_\perp\vec{H}\cdot\nabla^j\vec{B} +\frac{1}{2}\nabla^j\Delta_\perp\vec{H}\cdot\vec{B}  .  \nonumber
\end{align}

\noindent
Next, we compute the variation of the second term of the energy $\mathcal{E}(\Sigma)$.
\begin{align}
\tilde\delta \int_\Sigma |\vec{H}\cdot\vec{h}|^2 \,\,d\textnormal {vol}_g &=\tilde\delta\int_{\Sigma} g^{ik} g^{jl} (\vec{H}\cdot\vec{h}_{ij}) (\vec{H}\cdot\vec{h}_{kl}) \,\,|g|^{1/2} dx  \nonumber
\\&= 2\int_\Sigma (\vec{H}\cdot\vec{h}^{ij} )\tilde\delta(\vec{H}\cdot\vec{h}_{ij}) d\mu + (\vec{H}\cdot\vec{h}_{ij} )(\vec{H}\cdot\vec{h}^i_k) \tilde\delta g^{jk}\,\, d\mu +\int_{\Sigma} |\vec{H}\cdot\vec{h}|^2\tilde\delta|g|^{1/2} dx  \nonumber
\\&  = 2\int_\Sigma (\vec{H}\cdot\vec{h}^{ij} )\tilde\delta(\vec{H}\cdot\vec{h}_{ij})\,\, d\mu  +4\int_{\Sigma} \vec{B}\cdot\left[(\vec{H}\cdot\vec{h}_{ij})(\vec{H}\cdot\vec{h}^i_k)\vec{h}^{jk} -|\vec{H}\cdot\vec{h}|^2\vec{H}   \right] d\mu   \label{firstlyy}
\end{align}

\noindent
Inspecting the first integrand of \eqref{firstlyy} we have
\begin{align}
(\vec{H}\cdot\vec{h}^{ij} )\tilde\delta(\vec{H}\cdot\vec{h}_{ij})&=  (\vec{H}\cdot\vec{h}^{ij} )(\tilde\delta\vec{H}\cdot\vec{h}_{ij} +\vec{H}\cdot\tilde\delta\vec{h}_{ij})  \nonumber
\\&= \frac{1}{4}(\vec{H}\cdot\vec{h}^{ij} )(\vec{h}_{kl}\cdot\vec{h}_{ij})(\vec{B}\cdot\vec{h}^{kl}) +\frac{1}{4}(\vec{H}\cdot\vec{h}^{ij})(\vec{h}_{ij}\cdot\Delta_\perp\vec{B}) +(\vec{H}\cdot\vec{h}^{ij})(\vec{H}\cdot\nabla_i\nabla_j\vec{B}).   \label{seco}
\end{align}

\noindent
Note that the last two terms of \eqref{seco} can be written as
\begin{align}
(\vec{H}\cdot\vec{h}^{ij})(\vec{h}_{ij}\cdot\Delta_\perp\vec{B}) &= 
\nabla_k\left[ (\vec{H}\cdot\vec{h}^{ij})\vec{h}_{ij}\nabla^k\vec{B} -\pi_{\vec{n}}\nabla^k((\vec{H}\cdot\vec{h}^{ij})\vec{h}_{ij})\cdot\vec{B} \right] + \Delta_\perp((\vec{H}\cdot\vec{h}^{ij})\vec{h}_{ij})\cdot\vec{B}\nonumber
\end{align}
and
\begin{align}
(\vec{H}\cdot\vec{h}^{ij})(\vec{H}\cdot\nabla_i\nabla_j\vec{B}) &= 
\nabla_i\left[ (\vec{h}\cdot\vec{h}^{ij})(\vec{H}\cdot\nabla_j\vec{B}) -\nabla_j((\vec{H}\cdot \vec{h}^{ij})\vec{H})\cdot\vec{B} \right] +\vec{B}\cdot\nabla_i\nabla_j((\vec{h}\cdot\vec{h}^{ij})\vec{H}) . \nonumber
\end{align}

\noindent
Substituting \eqref{seco} into \eqref{firstlyy} yields
\begin{align}
\tilde\delta \int_\Sigma |\vec{H}\cdot\vec{h}|^2 \,\,d\mu &= \int_\Sigma \vec{B}\cdot\mathcal{\vec{W}}_2 +\nabla_jV_2^j \,\,\,d\mu
\end{align}

\noindent
where
\begin{align}
\mathcal{\vec{W}}_2 =  4(\vec{H}\cdot\vec{h}_{ij})(\vec{H}\cdot\vec{h}^i_k)\vec{h}^{jk} -4 |\vec{H}\cdot\vec{h}|^2\vec{H}   +\frac{1}{2}(\vec{H}\cdot\vec{h}^{ij})(\vec{h}_{kl}\cdot\vec{h}_{ij})\vec{h}^{kl}   +\frac{1}{2}\Delta_\perp ((\vec{H}\cdot\vec{h}^{ij})\vec{h}_{ij})     +2\pi_{\vec{n}}\nabla_i\nabla_j ((\vec{H}\cdot\vec{h}^{ij})\vec{H})\nonumber
\end{align}

\noindent
and
\begin{align}
V^j_2=\frac{1}{2}(\vec{H}\cdot\vec{h}^{ik})(\vec{h}_{ik}\cdot\nabla^j\vec{B})             -\frac{1}{2}\pi_{\vec{n}}\nabla^j((\vec{H}\cdot\vec{h}^{ik})\vec{h}_{ik})\cdot\vec{B} +2(\vec{H}\cdot\vec{h}^{ij})(\vec{H}\cdot\nabla_i\vec{B})    -2\pi_{\vec{n}}\nabla_i((\vec{H}\cdot\vec{h}^{ij})\vec{H})\cdot\vec{B} .\nonumber
\end{align}

\noindent
Finally, we compute the variation of the third term of $\mathcal{E}(\Sigma)$.
\begin{align}
\tilde\delta\int_{\Sigma}|\vec{H}|^4 d\textnormal {vol}_g &= 4\int_{\Sigma} |\vec{H}|^2\vec{H}\cdot\tilde\delta\vec{H} \,\,d\mu +\int_{\Sigma}|\vec{H}|^4 \tilde\delta|g|^{1/2}dx \nonumber
\\&= \int_{\Sigma} |\vec{H}|^2\vec{H}\cdot((\vec{B}\cdot\vec{h}^{ij}) \vec{h}_{ij} +\Delta_\perp\vec{ B}) -4|\vec{H}|^4(\vec{B}\cdot\vec{H}) \,\,\, d\mu  .           \label{third}
\end{align}

\noindent
Note that
\begin{align}
|\vec{H}|^2\vec{H}\cdot\Delta_\perp\vec{B} &=  \nabla_j\left[ |\vec{H}|^2\vec{H}\cdot\nabla^j\vec{B} -\pi_{\vec{n}}\nabla^j(|\vec{H}|^2\vec{H})\cdot\vec{B} \right]   +\Delta_\perp(|\vec{H}|^2\vec{H})\cdot\vec{B} . \label{lasteq}
\end{align}

\noindent
Substituting \eqref{lasteq} into \eqref{third}, we find
\begin{align}
\tilde\delta \int_\Sigma |\vec{H}|^4 \,\,d\textnormal {vol}_g &= \int_\Sigma \vec{B}\cdot\mathcal{\vec{W}}_3 +\nabla_jV_3^j \,\,\,d\mu
\end{align}

\noindent
where
\begin{align}
\mathcal{\vec{W}}_3= |\vec{H}|^2(\vec{H}\cdot\vec{h}_{ij})\vec{h}^{ij} -4|\vec{H}|^4\vec{H} +\Delta_\perp(|\vec{H}|^2\vec{H})           \nonumber
\end{align}
and
\begin{align}
V_3^j= |\vec{H}|^2\vec{H}\cdot\nabla^j\vec{B} -\pi_{\vec{n}}\nabla^j(|\vec{H}|^2\vec{H})\cdot\vec{B}.
\end{align}

\noindent
By setting $\mathcal{\vec{W}}:= \mathcal{\vec{W}}_1-\mathcal{\vec{W}}_2+7\mathcal{\vec{W}}_3$ and $V=V_1-V_2+7V_3$, and by applying divergence theorem, we find
\begin{align}
 \tilde\delta \int_{\Sigma}|\pi_{\vec{n}}\nabla\vec{H}|^2 -|\vec{H}\cdot\vec{h}|^2 +7|\vec{H}|^4 \,\, d\mu&= \int_{\Sigma}\vec{B}\cdot\mathcal{\vec{W}} +\nabla_j V^j \,\, d\mu \nonumber
\\&=   \int_\Sigma \vec{B}\cdot \mathcal{\vec{W}}\,\,d\mu +\int_{\partial \Sigma}V \,\,d\sigma.
\end{align}

\end{proof}
\noindent
One verifies via integration by parts and Codazzi-Mainardi equations that the boundary conditions \eqref{bdry}
ensure that the boundary terms vanish. Note that one would need to apply integration by parts twice to some terms of $V$ on the boundary.  It is known that the critical points of $\mathcal{E}(\Sigma)$ corresponds to $\mathcal{\vec{W}}=\vec{0}$ (see \cite{robingraham}, \cite{peterthesis}).



\noindent
\begin{prop}\label{mekl}
Let $\vec{\Phi}:\Sigma\rightarrow \mathbb{R}^m$ be an immersion of a 4-dimensional manifold $\Sigma$ whose boundaries satisfy the condition \eqref{bdry} and let $\gamma$ be the cut-off function in \eqref{cutoff} then 
the Willmore invariant satisfies the integral identity
\begin{align}
\int_\Sigma\mathcal{\vec{W}}\cdot\vec{H} \gamma^p d\mu &= \int_\Sigma\mathcal{\vec{W}}_1\cdot\vec{H} \gamma^p d\mu -\int_\Sigma\mathcal{\vec{W}}_2\cdot\vec{H} \gamma^p d\mu +7\int_\Sigma\mathcal{\vec{W}}_3\cdot\vec{H} \gamma^p d\mu \nonumber
\\&= -\frac{1}{2}\int_{\Sigma} |\Delta_\perp\vec{H}|^2 \gamma^p d\mu               +\int_\Sigma \pi_{\vec{n}}\nabla^j\vec{H}\cdot\nabla_j[(\vec{H}\cdot\vec{h}^{ik})\vec{h}_{ik}]
\gamma^p d\mu -2\int_\Sigma (\nabla^j\vec{H}\cdot\vec{h}^k_j)(\vec{H}\cdot\nabla_k\vec{H})\gamma^p d\mu  \nonumber
\\&\quad +2 \int_\Sigma (\vec{H}\cdot\vec{h}^k_j)(\pi_{\vec{n}}\nabla^j\vec{H}\cdot\nabla_k\vec{H})\gamma^p d\mu             +2\int_\Sigma\nabla_j((\vec{H}\cdot\vec{h}^{ij})\vec{H})\cdot\nabla_i\vec{H}
\gamma^p d\mu \nonumber 
\\&\quad   -7\int_\Sigma \nabla^j(|\vec{H}|^2\vec{H})\cdot\pi_{\vec{n}}\nabla_j\vec{H} \gamma^p d\mu                 +\int_\Sigma \vec{T}\cdot\vec{H} \gamma^p d\mu  +p\int_\Sigma U^i\gamma^{p-1}\nabla_i\gamma d\mu \nonumber
\end{align}
where 
\begin{align}
\vec{T}&:= 2(\pi_{\vec{n}}\nabla_i\vec{H}\cdot \pi_{\vec{n}}\nabla_j\vec{H})\vec{h}^{ij} -4|\pi_{\vec{n}}\nabla\vec{H}|^2\vec{H} -4(\vec{H}\cdot\vec{h}_{ij})(\vec{H}\cdot\vec{h}^i_k)\vec{h}^{jk} +4 |\vec{H}\cdot\vec{h}|^2\vec{H} \nonumber
\\& \quad-\frac{1}{2}(\vec{H}\cdot\vec{h}^{ij})(\vec{h}_{kl}\cdot\vec{h}_{ij})\vec{h}^{kl}-28|\vec{H}|^4\vec{H} \nonumber
\end{align}
and 
\begin{align}
U^i&:=-\frac{1}{2}\Delta_\perp \vec{H}\cdot\nabla^i\vec{H} +\frac{1}{2} \vec{H}\cdot\nabla^i\Delta_\perp \vec{H} + \frac{1}{2}(\nabla^i\vec{H}\cdot\vec{h}_{jk})(\vec{H}\cdot\vec{h}^{jk})   -2(\nabla^j\vec{H}\cdot\vec{h}^i_j)|\vec{H}|^2 +\nabla^j|\vec{H}|^2(\vec{H}\cdot\vec{h}^i_j) \nonumber
\\& \quad + \frac{1}{2}\nabla^i((\vec{H}\cdot\vec{h}_{kj})\vec{h}^{kj}) \cdot\vec{H} +2 \nabla_j((\vec{H}\cdot\vec{h}^{ij})\vec{H})\cdot\vec{H}
-7\nabla^i(|\vec{H}|^2\vec{H})\cdot\vec{H}.  \nonumber
\end{align}
\end{prop}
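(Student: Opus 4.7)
\textbf{Proof plan for Proposition \ref{mekl}.} The plan is to split the integral according to $\vec{\mathcal{W}} = \vec{\mathcal{W}}_1 - \vec{\mathcal{W}}_2 + 7\vec{\mathcal{W}}_3$ and then process each summand by integrating by parts exactly enough times to land on the form in the statement. Throughout, I use that $\vec{H}$ is a normal vector field, so that for any vector $\vec{X}$ the pairing $\vec{H}\cdot\nabla_j\vec{X}$ equals $\vec{H}\cdot\pi_{\vec{n}}\nabla_j\vec{X}$, and that the cut-off $\gamma$ produces the announced $U^i$-terms whenever a derivative falls on it. All boundary terms produced by these integrations by parts are handled at the end using the boundary conditions \eqref{bdry}.

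First I handle $\vec{\mathcal{W}}_1$. The bi-Laplacian contribution $-\tfrac12\int\vec{H}\cdot\Delta_\perp^2\vec{H}\,\gamma^p\,d\mu$ is integrated by parts twice: the first integration pushes a $\pi_{\vec{n}}\nabla$ off the innermost $\Delta_\perp\vec{H}$ and creates the cut-off contribution $\tfrac12\vec{H}\cdot\nabla^i\Delta_\perp\vec{H}$ to $U^i$, the second integration produces the coveted $-\tfrac12|\Delta_\perp\vec{H}|^2\gamma^p$ together with the $-\tfrac12\Delta_\perp\vec{H}\cdot\nabla^i\vec{H}$ contribution to $U^i$. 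The two divergence-type terms $2\pi_{\vec{n}}\nabla_j((\vec{h}^j_i\cdot\nabla^i\vec{H})\vec{H})$ and $-2\pi_{\vec{n}}\nabla_j((\vec{H}\cdot\vec{h}^j_i)\pi_{\vec{n}}\nabla^i\vec{H})$ each get one integration by parts; the derivative falling on $\vec{H}$ gives respectively $-2(\nabla^j\vec{H}\cdot\vec{h}^k_j)(\vec{H}\cdot\nabla_k\vec{H})\gamma^p$ and $+2(\vec{H}\cdot\vec{h}^k_j)(\pi_{\vec{n}}\nabla^j\vec{H}\cdot\nabla_k\vec{H})\gamma^p$, while the derivative falling on $\gamma^p$ contributes $-2(\nabla^j\vec{H}\cdot\vec{h}_j^i)|\vec{H}|^2+\nabla^j|\vec{H}|^2\,(\vec{H}\cdot\vec{h}^i_j)$ to $U^i$ after using $\nabla^j|\vec{H}|^2 = 2\vec{H}\cdot\nabla^j\vec{H}$. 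The remaining pieces of $\vec{\mathcal{W}}_1$, namely $2(\pi_{\vec{n}}\nabla_i\vec{H}\cdot\pi_{\vec{n}}\nabla_j\vec{H})\vec{h}^{ij}$ and $-4|\pi_{\vec{n}}\nabla\vec{H}|^2\vec{H}$, carry no derivatives to move and enter $\vec{T}$ directly. Finally, the scalar-algebraic term $-\tfrac12(\vec{h}_{ik}\cdot\Delta_\perp\vec{H})(\vec{H}\cdot\vec{h}^{ik})\gamma^p$ is integrated by parts once to contribute $+\tfrac12\int\pi_{\vec{n}}\nabla^j\vec{H}\cdot\nabla_j[(\vec{H}\cdot\vec{h}^{ik})\vec{h}_{ik}]\gamma^p\,d\mu$ plus the matching $+\tfrac12(\nabla^i\vec{H}\cdot\vec{h}_{jk})(\vec{H}\cdot\vec{h}^{jk})$ contribution to $U^i$.

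The treatment of $-\vec{\mathcal{W}}_2$ and $7\vec{\mathcal{W}}_3$ is analogous. The Laplacian piece $-\tfrac12\Delta_\perp((\vec{H}\cdot\vec{h}^{ij})\vec{h}_{ij})$ produces (after one integration by parts) the remaining half of $\int\pi_{\vec{n}}\nabla^j\vec{H}\cdot\nabla_j[(\vec{H}\cdot\vec{h}^{ik})\vec{h}_{ik}]\gamma^p\,d\mu$, so combining with the $\vec{\mathcal{W}}_1$ contribution produces the coefficient $+1$ displayed in the statement, together with the matching $+\tfrac12\nabla^i((\vec{H}\cdot\vec{h}_{kj})\vec{h}^{kj})\cdot\vec{H}$ term in $U^i$. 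The second derivative piece $-2\pi_{\vec{n}}\nabla_i\nabla_k((\vec{H}\cdot\vec{h}^{ik})\vec{H})$ needs a single integration by parts (moving one $\nabla$ onto $\vec{H}$), delivering $+2\int\nabla_j((\vec{H}\cdot\vec{h}^{ij})\vec{H})\cdot\nabla_i\vec{H}\,\gamma^p\,d\mu$ plus the $+2\nabla_j((\vec{H}\cdot\vec{h}^{ij})\vec{H})\cdot\vec{H}$ contribution to $U^i$. The purely algebraic pieces of $-\vec{\mathcal{W}}_2$ assemble into the $-4(\vec{H}\cdot\vec{h}_{ij})(\vec{H}\cdot\vec{h}^i_k)\vec{h}^{jk}$, $+4|\vec{H}\cdot\vec{h}|^2\vec{H}$ and $-\tfrac12(\vec{H}\cdot\vec{h}^{ij})(\vec{h}_{kl}\cdot\vec{h}_{ij})\vec{h}^{kl}$ summands of $\vec{T}$. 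For $7\vec{\mathcal{W}}_3$, the Laplacian $7\Delta_\perp(|\vec{H}|^2\vec{H})$ integrates by parts once to yield $-7\int\nabla^j(|\vec{H}|^2\vec{H})\cdot\pi_{\vec{n}}\nabla_j\vec{H}\,\gamma^p\,d\mu$ with the matching $-7\nabla^i(|\vec{H}|^2\vec{H})\cdot\vec{H}$ term in $U^i$, while the algebraic $-28|\vec{H}|^4\vec{H}$ and $7|\vec{H}|^2(\vec{H}\cdot\vec{h}_{ij})\vec{h}^{ij}$ pieces enter $\vec{T}$.

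The main obstacle is twofold. First, careful bookkeeping of the normal projections $\pi_{\vec{n}}$: since $\vec{H}$, $\Delta_\perp\vec{H}$ are normal but $\nabla\vec{H}$ generically has a tangential part, each integration by parts requires keeping track of whether an arising derivative is a full $\nabla$ or the projected $\pi_{\vec{n}}\nabla$, and whether a dot product with a normal quantity silently restricts things to the normal part. Second, I must verify that the boundary integrals generated along $\partial\Sigma$ by each integration by parts all vanish. This is where \eqref{bdry} is used: the conditions $\vec{h}=\vec{0}$ on $\partial\Sigma$ kill every boundary contribution containing an undifferentiated $\vec{h}_{ij}$ factor or an undifferentiated $\vec{H} = \tfrac14\vec{h}^j_j$, while $\pi_{\vec{\eta}}\nabla\vec{H}=\vec{0}$ and $\pi_{\vec{\eta}}\nabla\Delta_\perp\vec{H}=\vec{0}$ dispose of the boundary terms from the two integrations by parts on the bi-Laplacian piece and on $\Delta_\perp(|\vec{H}|^2\vec{H})$; the Codazzi-Mainardi equations and one further integration by parts on the boundary (as remarked in the paper after the variational theorem) handle the residual terms involving $\nabla\vec{h}$. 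Once every boundary contribution is eliminated, collecting the interior terms produces exactly the identity in the statement, with $\vec{T}$ collecting all algebraic contributions and $U^i$ collecting all cut-off contributions.
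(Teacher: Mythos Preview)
Your plan is correct and matches the paper's approach exactly: the paper's proof simply invokes three appendix lemmas (one for each $\vec{\mathcal{W}}_i$) which carry out precisely the integrations by parts you describe, using the boundary conditions \eqref{bdry} and the Codazzi--Mainardi equation to eliminate the boundary integrals, and then collects the algebraic remainders into $\vec{T}$ and the cut-off remainders into $U^i$. One minor bookkeeping point: the term $7|\vec{H}|^2(\vec{H}\cdot\vec{h}_{ij})\vec{h}^{ij}$ you route into $\vec{T}$ does not actually appear in the stated $\vec{T}$ (it shows up in Lemma~\ref{thirdlem} as $7|\vec{H}|^2|\vec{H}\cdot\vec{h}|^2$ but is absent from the Proposition's $\vec{T}$), so be aware of this discrepancy when finalizing the computation.
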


\noindent
\begin{proof}
The proof follows by combining Lemmas \ref{firstlem}, \ref{seclem} and \ref{thirdlem}.
\end{proof}

\noindent
We will mostly rely on the Peter-Paul inequality to obtain our estimates in the remaining part of this paper.
\section{Curvature estimates and rigidity result}\label{three0009}
\noindent
We begin by estimating the terms on the right hand side of the integral identity in Proposition \ref{mekl}.
\begin{lem} \label{laks1}
Let $\vec{\Phi}:\Sigma\rightarrow \mathbb{R}^m$ be an immersion of a 4-dimensional manifold $\Sigma$ whose boundaries satisfy the condition \eqref{bdry} and let $\gamma$ be the cut-off function in \eqref{cutoff} then 
\begin{align}
\frac{1}{2}\int_{\Sigma} |\Delta_\perp\vec{H}|^2 \gamma^p d\mu             &\leq\int_\Sigma|\mathcal{\vec{W}}\cdot\vec{H}| \gamma^p d\mu  +\tilde{c} \int_\Sigma |\vec{h}_0|^2|\pi_{\vec{n}}\nabla\vec{H}|^2 \gamma^p d\mu  +\tilde {c}\int_\Sigma |\vec{H}|^2|\pi_{\vec{n}}\nabla\vec{H}|^2 \gamma^p d\mu \nonumber
\\&\quad +c\int_\Sigma |\vec{h}_0|^2|\pi_{\vec{n}}\nabla\vec{h}_0|^2 \gamma^p d\mu + \tilde{c}\int_\Sigma |\vec{H}|^4|\vec{h}_0|^2 \gamma^p d\mu + \tilde{c}\int_\Sigma |\vec{H}|^2|\vec{h}_0|^4 \gamma^p d\mu  \nonumber
\\& \quad + \tilde{c}\int_\Sigma |\vec{H}|^6 \gamma^p d\mu
+
p\int_\Sigma U^i\gamma^{p-1}\nabla_i\gamma d\mu.
\end{align}
\end{lem}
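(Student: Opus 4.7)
The plan is to start from the integral identity in Proposition~\ref{mekl}, bring $-\tfrac12\int_\Sigma|\Delta_\perp\vec{H}|^2\gamma^p\,d\mu$ over to the left-hand side, and estimate the remaining integrals on the right. The term $\int_\Sigma\mathcal{\vec{W}}\cdot\vec{H}\gamma^p\,d\mu$ is trivially absorbed into $\int_\Sigma|\mathcal{\vec{W}}\cdot\vec{H}|\gamma^p\,d\mu$, and the boundary-cutoff integral $p\int_\Sigma U^i\gamma^{p-1}\nabla_i\gamma\,d\mu$ is transported unchanged. Only the five bulk integrals of the identity together with the $\vec{T}\cdot\vec{H}$ contribution then need to be bounded by the six permitted monomials on the right-hand side of the claim.

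The uniform strategy is to rewrite each integrand using the trace decomposition
\[
\vec{h}_{ij}=(\vec{h}_0)_{ij}+g_{ij}\vec{H},\qquad \pi_{\vec{n}}\nabla_k\vec{h}_{ij}=\pi_{\vec{n}}\nabla_k(\vec{h}_0)_{ij}+g_{ij}\pi_{\vec{n}}\nabla_k\vec{H},
\]
and to exploit the trace-freeness $g^{ij}(\vec{h}_0)_{ij}=0$ (hence $g^{ij}\pi_{\vec{n}}\nabla_k(\vec{h}_0)_{ij}=0$) to kill the cross-trace pieces generated by this splitting. Two pointwise identities that are invoked repeatedly are $|\vec{h}|^2=|\vec{h}_0|^2+4|\vec{H}|^2$ and $|\vec{H}\cdot\vec{h}|^2=|\vec{H}\cdot\vec{h}_0|^2+4|\vec{H}|^4$. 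When a divergence of $\vec{h}$ arises (as in the $\nabla_j[(\vec{H}\cdot\vec{h}^{ij})\vec{H}]$ summand), one appeals to the Codazzi-Mainardi identity $\pi_{\vec{n}}\nabla^j\vec{h}_{jk}=4\pi_{\vec{n}}\nabla_k\vec{H}$ to convert it into a derivative of $\vec{H}$.

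Each surviving product is then estimated pointwise by Cauchy-Schwarz and split by the Peter-Paul inequality, with the decisive grouping of like with like: factors involving $|\vec{h}_0|,|\pi_{\vec{n}}\nabla\vec{h}_0|$ on one side, factors involving $|\vec{H}|,|\pi_{\vec{n}}\nabla\vec{H}|$ on the other. For example, a generic mixed derivative product of shape $|\vec{H}||\vec{h}_0||\pi_{\vec{n}}\nabla\vec{H}||\pi_{\vec{n}}\nabla\vec{h}_0|$ splits as
\[
|\vec{H}||\vec{h}_0||\pi_{\vec{n}}\nabla\vec{H}||\pi_{\vec{n}}\nabla\vec{h}_0|\leq \varepsilon|\vec{h}_0|^2|\pi_{\vec{n}}\nabla\vec{h}_0|^2+C_\varepsilon|\vec{H}|^2|\pi_{\vec{n}}\nabla\vec{H}|^2,
\]
both summands being among the six permitted ones. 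The quartic and sextic terms from $\vec{T}\cdot\vec{H}$ (e.g. $|\vec{H}\cdot\vec{h}|^2|\vec{H}|^2$, $|\vec{H}|^2(\vec{H}\cdot\vec{h}_{ij})(\vec{H}\cdot\vec{h}^i_k)\vec{h}^{jk}\cdot\vec{H}$, $(\vec{H}\cdot\vec{h}^{ij})(\vec{h}_{kl}\cdot\vec{h}_{ij})\vec{h}^{kl}\cdot\vec{H}$, $|\vec{H}|^6$) are dealt with by the same splittings together with one or two further applications of Young's inequality, producing exactly the monomials $|\vec{H}|^4|\vec{h}_0|^2$, $|\vec{H}|^2|\vec{h}_0|^4$, and $|\vec{H}|^6$.

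The main obstacle is the first bulk integral $\int_\Sigma\pi_{\vec{n}}\nabla^j\vec{H}\cdot\nabla_j[(\vec{H}\cdot\vec{h}^{ik})\vec{h}_{ik}]\gamma^p\,d\mu$, because the product rule generates a contribution $(\pi_{\vec{n}}\nabla^j\vec{H}\cdot\vec{h}_{ik})(\vec{H}\cdot\pi_{\vec{n}}\nabla_j\vec{h}^{ik})$ in which $\pi_{\vec{n}}\nabla\vec{h}$ sits under a triple contraction; a naive Young pairing on this piece would throw up the forbidden monomial $|\vec{H}|^2|\pi_{\vec{n}}\nabla\vec{h}_0|^2$. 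The rescue is the trace-free identity used twice: expanding both $\vec{h}_{ik}$ and $\nabla_j\vec{h}^{ik}$ via the splitting above gives four subterms, of which the two with a $g^{ik}$ contracted against $\vec{h}_0$ or $\nabla\vec{h}_0$ vanish, the pure $\vec{H}$ piece is bounded by $|\vec{H}|^2|\pi_{\vec{n}}\nabla\vec{H}|^2$, and only $(\pi_{\vec{n}}\nabla^j\vec{H}\cdot(\vec{h}_0)_{ik})(\vec{H}\cdot\pi_{\vec{n}}\nabla_j(\vec{h}_0)^{ik})$ survives. The like-with-like Peter-Paul pairing then splits this last piece into $\varepsilon|\vec{h}_0|^2|\pi_{\vec{n}}\nabla\vec{h}_0|^2+C_\varepsilon|\vec{H}|^2|\pi_{\vec{n}}\nabla\vec{H}|^2$, both permitted. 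Collecting all the estimates with a suitable common constant $\tilde c$ (and a smaller constant $c$ in front of the single $|\vec{h}_0|^2|\pi_{\vec{n}}\nabla\vec{h}_0|^2$ contribution originating from this triple-contraction piece) yields the stated inequality.
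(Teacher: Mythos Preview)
Your proposal is correct and follows essentially the same route as the paper: start from Proposition~\ref{mekl}, isolate $\tfrac12\int|\Delta_\perp\vec{H}|^2\gamma^p$, bound $\int\mathcal{\vec{W}}\cdot\vec{H}$ by its absolute value, carry the $U^i$ term over unchanged, and estimate each of the remaining five bulk integrals and the $\vec{T}\cdot\vec{H}$ term via the trace decomposition $\vec{h}=\vec{h}_0+g\vec{H}$ followed by Cauchy--Schwarz and Peter--Paul. Your identification of the first integral as the delicate one and the use of trace-freeness of $\vec{h}_0$ to kill the cross terms (so that no forbidden $|\vec{H}|^2|\pi_{\vec{n}}\nabla\vec{h}_0|^2$ arises) is exactly the mechanism the paper exploits; the only cosmetic difference is that for the $\nabla_j[(\vec{H}\cdot\vec{h}^{ij})\vec{H}]\cdot\nabla_i\vec{H}$ term the paper splits $\nabla_i\vec{H}$ into its normal and tangential parts rather than invoking Codazzi, but the resulting bounds are the same.
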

\begin{proof}
For the first estimate, we use the decomposition $\vec{h}=\vec{h}_0 +\vec{H}g$ to have
\begin{align}
\int_\Sigma \pi_{\vec{n}}\nabla^j\vec{H}\cdot\nabla_j[(\vec{H}\cdot\vec{h}^{ik})\vec{h}_{ik}]
\gamma^p d\mu &= \int_\Sigma |\pi_{\vec{n}}\nabla\vec{H}\cdot (\vec{h}_0)|^2
\gamma^p d\mu  \nonumber
+ \int_\Sigma (\pi_{\vec{n}}\nabla^j\vec{H}\cdot (\vec{h}_0)_{ik})(\vec{H}\cdot\pi_{\vec{n}}\nabla_j(\vec{h}_0)^{ik})
\gamma^p d\mu
  \\&\quad  + \int_\Sigma (\pi_{\vec{n}}\nabla^j\vec{H}\cdot \pi_{\vec{n}}\nabla_j(\vec{h}_0)_{ik})(\vec{H}\cdot(\vec{h}_0)^{ik})
\gamma^p d\mu  + \int_\Sigma \pi_{\vec{n}}\nabla\vec{H}*\nabla(|\vec{H}|^2\vec{H})\gamma^p d\mu \nonumber
\\ &\leq  \tilde{c} \int_\Sigma |\vec{h}_0|^2|\pi_{\vec{n}}\nabla\vec{H}|^2 \gamma^p d\mu   +\tilde {c}\int_\Sigma |\vec{H}|^2|\pi_{\vec{n}}\nabla\vec{H}|^2 \gamma^p d\mu +c\int_\Sigma |\vec{h}_0|^2|\pi_{\vec{n}}\nabla\vec{h}_0|^2 \gamma^p d\mu.\nonumber
\end{align}

\noindent
Next, we estimate
\begin{align}
\int_\Sigma (\nabla^j\vec{H}\cdot\vec{h}^k_j)(\vec{H}\cdot\nabla_k\vec{H})\gamma^p d\mu  \leq \frac{c}{2} \int_\Sigma |\vec{h}_0|^2|\pi_{\vec{n}}\nabla\vec{H}|^2 \gamma^p d\mu +\tilde{c} \int_\Sigma |\vec{H}|^2|\pi_{\vec{n}}\nabla\vec{H}|^2 \gamma^p d\mu. \nonumber
\end{align}

\noindent
Next, we obtain the estimate
\begin{align}
\int_\Sigma (\vec{H}\cdot\vec{h}^k_j)(\pi_{\vec{n}}\nabla^j\vec{H}\cdot\nabla_k\vec{H})\gamma^p d\mu           \leq \frac{c}{2} \int_\Sigma |\vec{h}_0|^2|\pi_{\vec{n}}\nabla\vec{H}|^2 \gamma^p d\mu +\tilde{c} \int_\Sigma |\vec{H}|^2|\pi_{\vec{n}}\nabla\vec{H}|^2 \gamma^p d\mu. \nonumber
\end{align}

\noindent
We split into normal and tangential components and then estimate
\begin{align}
\int_\Sigma\nabla_j((\vec{H}\cdot\vec{h}^{ij})\vec{H})\cdot\nabla_i\vec{H}
\gamma^p d\mu &= \int_\Sigma\pi_{\vec{n}}\nabla_j((\vec{H}\cdot\vec{h}^{ij})\vec{H})\cdot\pi_{\vec{n}}\nabla_i\vec{H}
\gamma^p d\mu  +\int_\Sigma\pi_T\nabla_j((\vec{H}\cdot\vec{h}^{ij})\vec{H})\cdot\pi_T\nabla_i\vec{H}
\gamma^p d\mu     \nonumber
\\& \leq \tilde {c} \int_\Sigma |\vec{h}_0|^2|\pi_{\vec{n}}\nabla\vec{H}|^2 \gamma^p d\mu +\tilde{c} \int_\Sigma |\vec{H}|^2|\pi_{\vec{n}}\nabla\vec{H}|^2 \gamma^p d\mu +\frac{c}{2}\int_\Sigma |\vec{H}|^2|\vec{h}_0|^4 \gamma^p d\mu \nonumber
\\& \quad \tilde {c}\int_\Sigma |\vec{H}|^4|\vec{h}_0|^2 \gamma^p d\mu \nonumber
\end{align}

\noindent
Next, we estimate
\begin{align}
\int_\Sigma \nabla^j(|\vec{H}|^2\vec{H})\cdot\pi_{\vec{n}}\nabla_j\vec{H} \gamma^p d\mu& \leq \tilde{c}\int_\Sigma |\vec{H}|^2 |\pi_{\vec{n}}\nabla\vec{H}|^2 \gamma^p d\mu  \nonumber                
\end{align}

\noindent
Finally, we estimate
\begin{align}
\int_\Sigma \vec{T}\cdot\vec{H} \gamma^p d\mu &\leq\tilde{c} \int_\Sigma |\vec{H}|^2  |\pi_{\vec{n}}\nabla\vec{H}|^2 \gamma^p d\mu   +\tilde{c} \int_\Sigma |\vec{h}_0|^2|\pi_{\vec{n}}\nabla\vec{H}|^2 \gamma^p d\mu +\tilde{c}\int_\Sigma |\vec{H}|^4|\vec{h}_0|^2 \gamma^p d\mu \nonumber
\\& \quad+\tilde{c} \int_\Sigma |\vec{H}|^2|\vec{h}_0|^4 \gamma^p d\mu + \tilde{c}\int_\Sigma |\vec{H}|^6 \gamma^p d\mu .  \nonumber
\end{align}

\noindent
Combining the six estimates above and using Proposition \ref{mekl} yields the desired result.

\end{proof}

\noindent
The following lemma is similar to Lemma 4 in \cite{glen}. However, we include here some extra terms resulting from correctly interchanging covariant derivatives.

\noindent
\begin{lem}\label{laks2}
Let $\vec{\Phi}:\Sigma\rightarrow \mathbb{R}^m$ be an immersion of a 4-dimensional manifold $\Sigma$ whose boundaries satisfy the condition \eqref{bdry} and let $\gamma$ be the cut-off function in \eqref{cutoff} then 
\begin{align}
\int_\Sigma (|\pi_{\vec{n}}\nabla_{(2)}\vec{H}|^2  + |\vec{H}|^2|\pi_{\vec{n}}\nabla\vec{H}|^2)\gamma^p d\mu 
&\leq c\int_\Sigma |\Delta_\perp\vec{H}|^2 \gamma^p d\mu+c \int_\Sigma |\vec{H}|^2|\vec{h}|^4 \gamma^p d\mu \nonumber
\\& + c c_\gamma^2 \int_\Sigma |\pi_{\vec{n}}\nabla \vec{h}_0|^2 \gamma^{p-2} d\mu + c\int_\Sigma |\vec{h}_0|^2 |\pi_{\vec{n}}\nabla\vec{H}|^2\gamma^p d\mu       .     \nonumber
\end{align}

\end{lem}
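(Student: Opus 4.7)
The plan is to produce the bound on $\int_\Sigma|\pi_{\vec{n}}\nabla_{(2)}\vec{H}|^2\gamma^p\,d\mu$ by integrating by parts twice, converting the second-order integrand into $|\Delta_\perp\vec{H}|^2\gamma^p$ at the price of commutator curvature terms coming from the Ricci identity in the normal bundle, and then to deal with the remaining $|\vec{H}|^2|\pi_{\vec{n}}\nabla\vec{H}|^2\gamma^p$ contribution by a separate integration-by-parts argument. Throughout, the boundary hypotheses \eqref{bdry} are used to annihilate every boundary integral that appears (with $\vec{h}=\vec{0}$ on $\partial\Sigma$ also giving $\vec{H}=\vec{0}$ there), and the cut-off derivative terms are absorbed via the Peter--Paul inequality. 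The Codazzi-Mainardi divergence identity $\pi_{\vec{n}}\nabla^i(\vec{h}_0)_{ij}=3\pi_{\vec{n}}\nabla_j\vec{H}$ recorded in Section~\ref{two84673} lets me rewrite $|\pi_{\vec{n}}\nabla\vec{H}|^2\gamma^{p-2}$ in terms of $|\pi_{\vec{n}}\nabla\vec{h}_0|^2\gamma^{p-2}$, matching the form required on the right-hand side.

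Setting $A_j:=\pi_{\vec{n}}\nabla_j\vec{H}$, a first integration by parts in $\nabla^i$ yields $\int|\pi_{\vec{n}}\nabla_{(2)}\vec{H}|^2\gamma^p\,d\mu = -\int A_j\cdot\Delta_\perp A^j\gamma^p\,d\mu + \text{bdry} + \text{cut-off}$, where the cut-off contribution has the form $p\int \gamma^{p-1}\pi_{\vec{n}}\nabla^i A_j\cdot A^j\,\nabla_i\gamma\, d\mu$ and, after Peter--Paul, contributes $\epsilon\int \gamma^p|\pi_{\vec{n}}\nabla_{(2)}\vec{H}|^2 + \epsilon^{-1}c_\gamma^2\int \gamma^{p-2}|\pi_{\vec{n}}\nabla\vec{h}_0|^2$, with the first piece absorbed into the LHS. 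The Ricci identity in the normal bundle writes $\Delta_\perp A_j = \pi_{\vec{n}}\nabla_j\Delta_\perp\vec{H} + C[\vec{h}]\ast A$ with $C[\vec{h}]$ quadratic in $\vec{h}$ (Gauss and Ricci equations for $R$ and $R^\perp$); splitting $\vec{h}=\vec{h}_0+g\vec{H}$ bounds the resulting commutator by a combination of $|\vec{h}_0|^2|\pi_{\vec{n}}\nabla\vec{H}|^2$ (kept on the RHS) and $|\vec{H}|^2|\pi_{\vec{n}}\nabla\vec{H}|^2$ (absorbed below). A second integration by parts on $-\int\pi_{\vec{n}}\nabla^j\vec{H}\cdot\pi_{\vec{n}}\nabla_j\Delta_\perp\vec{H}\,\gamma^p\,d\mu$ then produces $\int|\Delta_\perp\vec{H}|^2\gamma^p\,d\mu$ modulo a cut-off term handled as before and a boundary integral annihilated by $\pi_{\vec\eta}\nabla\Delta_\perp\vec{H}=\vec{0}$.

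For the residual $|\vec{H}|^2|\pi_{\vec{n}}\nabla\vec{H}|^2\gamma^p$, integrating by parts using $\pi_{\vec{n}}\nabla^i(|\vec{H}|^2\pi_{\vec{n}}\nabla_i\vec{H}) = 2(\vec{H}\cdot\pi_{\vec{n}}\nabla^i\vec{H})\pi_{\vec{n}}\nabla_i\vec{H} + |\vec{H}|^2\Delta_\perp\vec{H}$ gives $\int|\vec{H}|^2|\pi_{\vec{n}}\nabla\vec{H}|^2\gamma^p\,d\mu = -\int|\vec{H}|^2\vec{H}\cdot\Delta_\perp\vec{H}\,\gamma^p\,d\mu - 2\int(\vec{H}\cdot\pi_{\vec{n}}\nabla^i\vec{H})^2\gamma^p\,d\mu + \text{cut-off} + \text{bdry}$, of which the second term is nonpositive and may be dropped, the cut-off is Peter--Paul'd as before, and the boundary integral vanishes by $\vec{h}=\vec{0}$ on $\partial\Sigma$. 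Peter--Paul together with $|\vec{H}|^2\leq C|\vec{h}|^2$ yields $|\vec{H}|^2\vec{H}\cdot\Delta_\perp\vec{H}\leq \epsilon|\Delta_\perp\vec{H}|^2 + C_\epsilon |\vec{H}|^2|\vec{h}|^4$, and combining with the previous step and taking $\epsilon$ small enough absorbs the $|\vec{H}|^2|\pi_{\vec{n}}\nabla\vec{H}|^2$ commutator contribution into the LHS, producing the stated inequality. The main obstacle is the careful bookkeeping of the commutator curvature terms generated by interchanging $\pi_{\vec{n}}\nabla_i$ and $\pi_{\vec{n}}\nabla_j$ (the ``extra terms'' the author flags relative to Lemma~4 of \cite{glen}) together with the verification that every boundary integral produced by the two integrations by parts is killed by \eqref{bdry}, which in some cases requires symmetrizing the integrand via Codazzi-Mainardi before pushing the derivative to the boundary.
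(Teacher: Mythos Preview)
Your overall architecture --- two integrations by parts to trade $|\pi_{\vec{n}}\nabla_{(2)}\vec{H}|^2$ for $|\Delta_\perp\vec{H}|^2$, with commutator terms controlled and cut-off terms Peter--Paul'd --- matches the paper's proof. There are, however, two substantive differences worth noting.

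First, your commutator formula $\Delta_\perp A_j=\pi_{\vec{n}}\nabla_j\Delta_\perp\vec{H}+C[\vec{h}]\ast A$ with $C[\vec{h}]$ merely quadratic in $\vec{h}$ is incomplete. The actual identity the paper derives (equation \eqref{land}) reads
\[
\Delta_\perp\pi_{\vec{n}}\nabla_k\vec{H}=\pi_{\vec{n}}\nabla_k\Delta_\perp\vec{H}+3|\vec{H}|^2\pi_{\vec{n}}\nabla_k\vec{H}+(\vec{h}\ast\vec{h}_0)\ast\pi_{\vec{n}}\nabla\vec{H}+\vec{U}_k,
\]
where $\vec{U}_k$ contains tangential-projection corrections such as $\pi_{\vec{n}}\Delta\pi_T\nabla_k\vec{H}$ and, crucially, the term $-\pi_{\vec{n}}\nabla_k[(\vec{H}\cdot\vec{h}_{ij})\vec{h}^{ij}]$, which carries a \emph{derivative} of $\vec{h}$ and is not of the form $C[\vec{h}]\ast A$. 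In the paper these $\vec{U}_k$ contributions are handled by a further integration by parts against $\pi_{\vec{n}}\nabla^k\vec{H}\gamma^p$, producing an absorbable $\epsilon\int|\pi_{\vec{n}}\nabla_{(2)}\vec{H}|^2\gamma^p$ together with the $|\vec{H}|^2|\vec{h}|^4$ term appearing in the statement. You acknowledge the bookkeeping difficulty at the end, but the schematic formula you wrote does not reflect it; make sure your write-up actually carries these $\vec{U}_k$ terms through.

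Second, the paper does \emph{not} use a separate integration-by-parts argument to put $\int|\vec{H}|^2|\pi_{\vec{n}}\nabla\vec{H}|^2\gamma^p$ on the left. It comes for free, with the explicit good sign $+3$, directly from the commutator identity above. Your separate argument (integrating $|\vec{H}|^2\pi_{\vec{n}}\nabla_i\vec{H}$ by parts against $\vec{H}$) is correct and does yield the bound, but it is an extra step the paper avoids. The paper's route is slightly cleaner because it exploits the precise coefficient in the commutator rather than treating all commutator pieces as error terms to be estimated; your route is more robust in that it does not depend on that sign.
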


\begin{proof}
\noindent
We begin by interchanging covariant derivatives (see Appendix \ref{interchange} for full computation)
\begin{align}
\Delta_\perp\pi_{\vec{n}}\nabla_k\vec{H} &=g^{ij}\pi_{\vec{n}}\nabla_i\pi_{\vec{n}} \nabla_j\pi_{\vec{n}}\nabla_k\vec{H} \nonumber
\\&= \pi_{\vec{n}}\nabla_k\Delta\vec{H} +3|\vec{H}|^2\pi_{\vec{n}}\nabla_k\vec{H}+(\vec{h}* \vec{h}_0)*\pi_{\vec{n}}\nabla\vec{H}- \pi_{\vec{n}}\Delta\pi_{T} \nabla_k\vec{H} -\pi_{\vec{n}}\nabla^j\pi_T\nabla_j\pi_{\vec{n}} \nabla_k\vec{H}  \label{kingsl}
\end{align}
where we have use the *-notation to indicate contractions with the metric up to some constant.

\noindent
Using the identity 
\begin{align}
\Delta\vec{H}= \Delta_\perp\vec{H} -(\vec{H}\cdot\vec{h}_{ij})\vec{h}^{ij} -2\nabla_s\vec{H}\cdot\vec{h}^{sk}\nabla_k\vec{\Phi}  -2\nabla^k|\vec{H}|^2\nabla_k\vec{\Phi}  \nonumber
\end{align}
in \eqref{kingsl}, we obtain 
\begin{align}
\Delta_\perp\pi_{\vec{n}}\nabla_k\vec{H} &= \pi_{\vec{n}}\nabla_k\Delta_\perp\vec{H} +3|\vec{H}|^2\pi_{\vec{n}}\nabla_k\vec{H}+(\vec{h}* \vec{h}_0)*\pi_{\vec{n}}\nabla\vec{H} +\vec{U}_k
 \label{land}
\end{align}
where
\begin{align}
\vec{U}_k:=  -\pi_{\vec{n}}\nabla_k[(\vec{H}\cdot\vec{h}_{ij})\vec{h}^{ij}] -2(\nabla_s\vec{H}\cdot\vec{h}^{sj})\vec{h}_{jk} -2\nabla^i|\vec{H}|^2\vec{h}_{ik}- \pi_{\vec{n}}\Delta\pi_{T} \nabla_k\vec{H} -\pi_{\vec{n}}\nabla^j\pi_T\nabla_j\pi_{\vec{n}} \nabla_k\vec{H}.                                        \label{you}
\end{align}

\noindent
Integrating \eqref{land} against $\pi_{\vec{n}}\nabla^k\vec{H}\gamma^p$ gives
\begin{align}
\int_\Sigma\pi_{\vec{n}}\nabla_k\Delta_\perp\vec{H} \cdot \pi_{\vec{n}}\nabla^k\vec{H}\gamma^p d\mu &=\int_\Sigma\Delta_\perp\pi_{\vec{n}}\nabla_k\vec{H}\cdot \pi_{\vec{n}}\nabla^k\vec{H}\gamma^p d\mu
 -3\int_\Sigma|\vec{H}|^2|\pi_{\vec{n}}\nabla\vec{H}|^2 d\mu \nonumber
\\&\quad+\int_\Sigma(\vec{h}* \vec{h}_0)*\pi_{\vec{n}}\nabla\vec{H} \cdot \pi_{\vec{n}}\nabla\vec{H}\gamma^pd\mu-\int_\Sigma\vec{U}_k\cdot \pi_{\vec{n}}\nabla^k\vec{H}\gamma^p d\mu \label{land11}
\end{align}

\noindent
Divergence theorem and boundary condition \eqref{bdry} give
\begin{align}
\int_\Sigma\pi_{\vec{n}}\nabla_k\Delta_\perp\vec{H} \cdot \pi_{\vec{n}}\nabla^k\vec{H}\gamma^p d\mu &+\int_\Sigma |\Delta_\perp\vec{H}|^2\gamma^p d\mu 
+p\int_\Sigma \Delta_\perp\vec{H}\cdot\pi_{\vec{n}}\nabla^k\vec{H} \gamma^{p-1}\nabla_k\gamma  d\mu \nonumber
\\&=\int_{\partial\Sigma} \Delta_\perp\vec{H}\cdot \pi_{\vec{\eta}}\nabla\vec{H}\gamma^p d\sigma =0 \label{land22}
\end{align}

\noindent
On the other hand, divergence theorem and boundary condition \eqref{bdry} also give
\begin{align}
\int_\Sigma \Delta_\perp\pi_{\vec{n}}\nabla_k\vec{H}\cdot \pi_{\vec{n}}\nabla^k\vec{H} \gamma^p d\mu &+\int_\Sigma |\pi_{\vec{n}}\nabla_{(2)}\vec{H}|^2 \gamma^p d\mu + p\int_{\Sigma} \pi_{\vec{n}}\nabla^i\pi_{\vec{n}}\nabla_k\vec{H}\cdot\pi_{\vec{n}}\nabla^k\vec{H}
\gamma^{p-1}\nabla_i \gamma d\mu  \nonumber
\\&=\frac{1}{2}\int_{\partial\Sigma} \pi_{\vec{\eta}} \nabla|\pi_{\vec{n}}\nabla\vec{H}|^2 \gamma^p d\sigma =0  \label{land33}
\end{align}
Accordingly, by combining \eqref{land11}, \eqref{land22}, \eqref{land33} and calling upon the Cauchy-Schwartz and Peter-Paul inequalities we find for $c >0$
\begin{align}
\int_\Sigma |\pi_{\vec{n}}\nabla_{(2)}\vec{H}|^2 \gamma^p d\mu +3\int_\Sigma |\vec{H}|^2|\pi_{\vec{n}}\nabla\vec{H}|^2\gamma^p d\mu &= \int_\Sigma |\Delta_\perp\vec{H}|^2 \gamma^p d\mu +\int_\Sigma \pi_{\vec{n}}\nabla_{(2)}\vec{H}*\pi_{\vec{n}}\nabla\vec{H}*\nabla\gamma \gamma^{p-1} d\mu\nonumber
\\&+    \int_\Sigma \vec{h}* (\vec{h}_0)*\pi_{\vec{n}}\nabla\vec{H}*\pi_{\vec{n}}\nabla\vec{H} \gamma^p d\mu  +\int_{\Sigma} \vec{U}* \pi_{\vec{n}}\nabla\vec{H}\gamma^p d\mu               \nonumber
\\&\leq \int_\Sigma |\Delta_\perp\vec{H}|^2 \gamma^p d\mu+\frac{1}{2c} \int_\Sigma |\pi_{\vec{n}}\nabla_{(2)}\vec{H}|^2 \gamma^p d\mu \nonumber
\\& + \frac{c}{2} c_\gamma^2 \int_\Sigma |\pi_{\vec{n}}\nabla \vec{h}_0|^2 \gamma^{p-2} d\mu + \frac{1+2c^2}{2c}\int_\Sigma |\vec{h}_0|^2 |\pi_{\vec{n}}\nabla\vec{H}|^2\gamma^p d\mu  \nonumber
\\& +\frac{1}{2c} \int_\Sigma |\vec{H}|^2|\pi_{\vec{n}}\nabla\vec{H}|^2 \gamma^pd\mu +\int_{\Sigma} \vec{U}* \pi_{\vec{n}}\nabla\vec{H}\gamma^p d\mu       .     \label{lab1}
\end{align}

\noindent
Next, by the same token, \eqref{you} gives
\begin{align}
\int_\Sigma\vec{U}_k\cdot\pi_{\vec{n}}\nabla^k\vec{H} \gamma^p d\mu  &=\int_\Sigma (\vec{H}*\vec{h})*(\vec{h}*\pi_{\vec{n}}\nabla_{(2)}\vec{H})  \gamma^p d\mu  + \int_\Sigma (\vec{H}*\vec{h})*(\vec{h}*\pi_{\vec{n}}\nabla\vec{H})  \nabla\gamma\gamma^{p-1} d\mu 
 \nonumber
\\&+\int_\Sigma (\pi_{\vec{n}}\nabla\vec{H}*(\vec{h}_0+\vec{H}g))*((\vec{h}_0+\vec{H}g)*\pi_{\vec{n}}\nabla\vec{H})  \gamma^p d\mu   \nonumber
\\&\leq \frac{c}{2}\int_\Sigma |\vec{H}|^2|\vec{h}|^4 \gamma^p d\mu +\frac{1}{2c}\int_\Sigma |\pi_{\vec{n}}\nabla_{(2)} \vec{H}|^2 \gamma^p d\mu +\frac{1}{2c}\int_\Sigma |\vec{H}|^2|\vec{h}|^4 \gamma^p d\mu \nonumber
\\&+ \frac{c}{2}c^2_\gamma \int_\Sigma |\pi_{\vec{n}}\nabla\vec{h}_0|^2 \gamma^{p-2} d\mu +\frac{1+2c^2}{2c} \int_\Sigma |\vec{h}_0|^2|\pi_{\vec{n}}\nabla\vec{H}|^2\gamma^p d\mu +\frac{2+c^2}{2c}\int_\Sigma |\vec{H}|^2|\pi_{\vec{n}}\nabla\vec{H}|^2\gamma^p d\mu.  \label{lastt}
\end{align}

\noindent
The proof is finished by substituting \eqref{lastt} into \eqref{lab1} and absorbing.

\end{proof}

\noindent
If we combine the estimates in Lemmas \ref{laks1} and \ref{laks2}, we obtain the following result.

\noindent
\begin{lem}\label{okl1}
Let $\vec{\Phi}:\Sigma\rightarrow \mathbb{R}^m$ be an immersion of a 4-dimensional manifold $\Sigma$ whose boundaries satisfy the condition \eqref{bdry} and let $\gamma$ be the cut-off function in \eqref{cutoff} then 
\begin{align}
\int_\Sigma |\pi_{\vec{n}}\nabla_{(2)}\vec{H}|^2 \gamma^p d\mu &\leq c \int_\Sigma |\mathcal{\vec{W}}\cdot\vec{H}| \gamma^p d\mu   + c\int_\Sigma |\vec{h}_0|^2  |\pi_{\vec{n}}\nabla\vec{H}|^2 \gamma^p d\mu + c\int_\Sigma |\vec{H}|^2  |\pi_{\vec{n}}\nabla\vec{H}|^2 \gamma^p d\mu     \nonumber
\\&\quad   + c\int_\Sigma |\vec{h}_0|^2  |\pi_{\vec{n}}\nabla\vec{h}_0|^2 \gamma^p d\mu                    + c\int_\Sigma |\vec{H}|^2  |\vec{h}|^4 \gamma^p d\mu \nonumber + cc_\gamma^2\int_\Sigma  |\pi_{\vec{n}}\nabla\vec{h}_0|^2 \gamma^{p-2} d\mu     \nonumber
\\&\quad\quad + c\int_\Sigma |\vec{H}|^4  |\vec{h}|^2 \gamma^p d\mu   + c\int_\Sigma |\vec{H}|^6   \gamma^p d\mu \nonumber   + 2p\int_\Sigma U^i \gamma^{p-1} \nabla_i\gamma d\mu           . \nonumber
\end{align}

\end{lem}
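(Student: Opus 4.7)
The plan is to combine the two preceding lemmas by using Lemma \ref{laks1} to eliminate the $|\Delta_\perp\vec{H}|^2$ term on the right hand side of Lemma \ref{laks2}. First I would take Lemma \ref{laks2} as the starting inequality and drop the non-negative summand $\int_\Sigma |\vec{H}|^2|\pi_{\vec{n}}\nabla\vec{H}|^2\gamma^p\, d\mu$ from its left hand side, so that only $\int_\Sigma |\pi_{\vec{n}}\nabla_{(2)}\vec{H}|^2\gamma^p\, d\mu$ remains on the left.

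Next I would substitute twice the estimate of Lemma \ref{laks1} into the $c\int_\Sigma |\Delta_\perp\vec{H}|^2\gamma^p\, d\mu$ contribution on the right; the factor of two absorbs the $\tfrac{1}{2}$ in front of $\int_\Sigma |\Delta_\perp\vec{H}|^2\gamma^p\, d\mu$ on the left hand side of Lemma \ref{laks1}, and is what produces the explicit coefficient $2p$ in front of the boundary-divergence term $\int_\Sigma U^i\gamma^{p-1}\nabla_i\gamma\, d\mu$ in the target statement. After this substitution, each integrand on the right is either already among those listed in the statement or is of the form $|\vec{H}|^4|\vec{h}_0|^2$ or $|\vec{H}|^2|\vec{h}_0|^4$.

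These last two integrands I would upgrade to $|\vec{H}|^4|\vec{h}|^2$ and $|\vec{H}|^2|\vec{h}|^4$ via the orthogonal decomposition $\vec{h}_{ij}=(\vec{h}_0)_{ij}+g_{ij}\vec{H}$, which (since $\vec{h}_0$ is tracefree with respect to $g$) yields $|\vec{h}|^2=|\vec{h}_0|^2+4|\vec{H}|^2$ and hence the pointwise bound $|\vec{h}_0|\le|\vec{h}|$. Collecting every term and renaming all the absolute constants into a single generic $c$ then delivers the asserted inequality.

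This argument is essentially pure bookkeeping; the main obstacle is clerical, namely keeping track of how the constants from Lemma \ref{laks1} combine with the $c$ from Lemma \ref{laks2} (all of which are absorbed into a single generic $c$) and retaining the correct coefficient on the boundary-divergence term $\int_\Sigma U^i\gamma^{p-1}\nabla_i\gamma\, d\mu$. No new geometric or analytic input beyond the two preceding lemmas is required.
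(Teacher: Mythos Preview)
Your proposal is correct and matches the paper's approach exactly: the paper's own proof of this lemma is the single sentence ``If we combine the estimates in Lemmas \ref{laks1} and \ref{laks2}, we obtain the following result.'' One small quibble: since the constant in front of $\int_\Sigma|\Delta_\perp\vec{H}|^2\gamma^p\,d\mu$ in Lemma \ref{laks2} is a generic $c$, substituting (twice) Lemma \ref{laks1} actually yields $2cp$ rather than exactly $2p$ in front of the $U^i$ term, so your explanation of the origin of the precise coefficient $2p$ is not quite right; the paper appears to be loose here, and since this term is later estimated in absolute value (Lemma \ref{b1}) the exact constant is immaterial.
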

\noindent
We now establish estimates involving the derivatives of the trace-free second fundamental form. To do this we need a Simon-type indentity for $\vec{h}_0$.

\noindent
\begin{cl}[A Simon-type identity]
We have 
\begin{align}
\Delta_\perp \vec{h}_0  &= (4\pi_{\vec{n}} \nabla \pi_{\vec{n}} \nabla\vec{H} -g\Delta_\perp\vec{H} )+ (\vec{h}_0 * \vec{h}_0)*\vec{h}_0 +(\vec{H}*\vec{h}_0)*\vec{h}_0 + 4|\vec{H}|^2\vec{h}_0   . \label{simonid}
\end{align}
\end{cl}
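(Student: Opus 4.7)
The plan is to reduce the Simon-type identity for the tracefree second fundamental form $\vec{h}_0$ to the Simons-type identity for the full second fundamental form $\vec{h}$, and then collect terms using the orthogonal decomposition $\vec{h}=\vec{h}_0+g\vec{H}$. First I would write $\Delta_\perp\vec{h}_0=\Delta_\perp\vec{h}-g\,\Delta_\perp\vec{H}$, which accounts directly for the $-g\Delta_\perp\vec{H}$ piece on the right-hand side. All remaining work thus goes into evaluating $\Delta_\perp\vec{h}_{ij}$.

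To handle $\Delta_\perp\vec{h}_{ij}$ I would use Codazzi--Mainardi to swap a derivative onto an index of $\vec{h}$,
\begin{align*}
\Delta_\perp\vec{h}_{ij}=g^{kl}\pi_{\vec{n}}\nabla_k\pi_{\vec{n}}\nabla_l\vec{h}_{ij}=g^{kl}\pi_{\vec{n}}\nabla_k\pi_{\vec{n}}\nabla_i\vec{h}_{jl},
\end{align*}
and then commute the two covariant derivatives in the normal bundle to split off a commutator:
\begin{align*}
g^{kl}\pi_{\vec{n}}\nabla_k\pi_{\vec{n}}\nabla_i\vec{h}_{jl}=g^{kl}\pi_{\vec{n}}\nabla_i\pi_{\vec{n}}\nabla_k\vec{h}_{jl}+g^{kl}[\pi_{\vec{n}}\nabla_k,\pi_{\vec{n}}\nabla_i]\vec{h}_{jl}.
\end{align*}
A second Codazzi application inside the first term followed by tracing with $g^{kl}$ turns it into $4\pi_{\vec{n}}\nabla_i\pi_{\vec{n}}\nabla_j\vec{H}$ (the coefficient $4=\dim\Sigma$ comes from $g^{kl}g_{kl}$), which supplies the leading $4\pi_{\vec{n}}\nabla\pi_{\vec{n}}\nabla\vec{H}$ contribution in the claim.

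The commutator term I would expand via the standard identity for a normal-valued $(0,2)$ tensor,
\begin{align*}
[\pi_{\vec{n}}\nabla_k,\pi_{\vec{n}}\nabla_i]\vec{h}_{jl}=R^\perp_{ki}\cdot\vec{h}_{jl}-R^{m}{}_{jki}\vec{h}_{ml}-R^{m}{}_{lki}\vec{h}_{jm},
\end{align*}
and then substitute the Gauss equation for the intrinsic curvature $R$ and the Ricci equation for the normal curvature $R^\perp$; both are quadratic expressions of the schematic form $\vec{h}*\vec{h}$. Contracting with $g^{kl}$ therefore produces a cubic tensor of schematic form $\vec{h}*\vec{h}*\vec{h}$ living in the normal bundle.

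Finally, I would plug in $\vec{h}=\vec{h}_0+g\vec{H}$ in this cubic expression and read off contributions by degree in $\vec{h}_0$: the purely tracefree part yields $(\vec{h}_0*\vec{h}_0)*\vec{h}_0$; the mixed terms with one trace factor give the $(\vec{H}*\vec{h}_0)*\vec{h}_0$ piece; and the two-trace contributions collect exactly into the scalar multiple $4|\vec{H}|^2\vec{h}_0$. Any purely $|\vec{H}|^3$ residue has the form of $g$ tensored with a normal vector and is absorbed into the $g\Delta_\perp\vec{H}$ slot, which is what forces the expression on the left-hand side to be tracefree as required. The principal obstacle in executing this plan is the bookkeeping in this last step: one must carefully keep track of tangential versus normal projections when commuting $\pi_{\vec{n}}\nabla$, respect the index symmetries inherited from Gauss and Ricci, and verify that the pure $|\vec{H}|^3$ pieces really do combine into the $g\Delta_\perp\vec{H}$ correction so as to leave the stated coefficient $4$ in front of $|\vec{H}|^2\vec{h}_0$; beyond that, the remainder is routine tensor manipulation.
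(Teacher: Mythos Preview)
Your approach is essentially the paper's: apply Codazzi, commute covariant derivatives to produce curvature terms (the paper tracks the tangential projections $\pi_T\nabla$ explicitly rather than packaging them as $R^\perp$ via the Ricci equation, but this is the same computation), and then decompose $\vec{h}=\vec{h}_0+g\vec{H}$ in the resulting cubic. One small correction: the pure $|\vec{H}|^2\vec{H}$ contributions from the cubic $\vec{h}*\vec{h}*\vec{h}$ do not get ``absorbed into the $g\Delta_\perp\vec{H}$ slot'' --- that slot is already accounted for by your very first step $\Delta_\perp\vec{h}_0=\Delta_\perp\vec{h}-g\Delta_\perp\vec{H}$; instead those cubic terms cancel among themselves, which you will see once you do the bookkeeping you flag.
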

\begin{proof}
Observe that the Codazzi-Mainardi equations and interchanging covariant derivatives imply
\begin{align}
\Delta_\perp \vec{h}_{jk} &= \nabla_i\nabla_j \vec{h}^i_k -\nabla_i \pi_T \nabla_j \vec{h}^i_k -\pi_T\nabla_i\pi_{\vec{n}} \nabla_j\vec{h}^i_k   \nonumber
\\&= 4\pi_{\vec{n}} \nabla_j \pi_{\vec{n}} \nabla_k\vec{H} + 4\pi_T \nabla_j\pi_{\vec{n}} \nabla_k\vec{H} + \nabla_j \pi_T \nabla_i \vec{h}^i_k   \nonumber
\\&\quad+\tensor {R}{_{ij}^i_e}\vec{h}^e_k + \tensor {R}{_{ijke}}\vec{h}^{ie} -\nabla_i \pi_T \nabla_j \vec{h}^i_k -\pi_T\nabla_i\pi_{\vec{n}} \nabla_j\vec{h}^i_k   \nonumber
\\&= 4\pi_{\vec{n}} \nabla_j \pi_{\vec{n}} \nabla_k\vec{H}  +4(\vec{H}\cdot\vec{h}_{ij})\vec{h}^i_k  -(\vec{h}_{ie}\cdot\vec{h}_{jk})\vec{h}^{ie}
  \nonumber
\\&\quad+2 (\vec{h}^i_k\cdot\vec{h}_{je})\vec{h}_i^e - (\vec{h}^i_k\cdot\vec{h}_{ie})\vec{h}^e_j -(\vec{h}_{ie}\cdot\vec{h}^i_j )\vec{h}^e_k .\label{symm6}
\end{align}
Accordingly, we use the decomposition $\vec{h}_0= \vec{h}-g\vec{H}$ to arrive at 
\begin{align}
\Delta_\perp \vec{h}_0  &= (4\pi_{\vec{n}} \nabla \pi_{\vec{n}} \nabla\vec{H} -g\Delta_\perp\vec{H} )+ (\vec{h}_0 * \vec{h}_0)*\vec{h}_0 +(\vec{H}*\vec{h}_0)*\vec{h}_0 + 4|\vec{H}|^2\vec{h}_0   . \nonumber
\end{align}

\end{proof}

\noindent
\begin{lem} \label{okl}
Let $\vec{\Phi}:\Sigma\rightarrow \mathbb{R}^m$ be an immersion of a 4-dimensional manifold $\Sigma$ whose boundaries satisfy the condition \eqref{bdry} and let $\gamma$ be the cut-off function in \eqref{cutoff} then 
\begin{align}
&\int_\Sigma |{\pi_{\vec{n}}} \nabla \vec{h}_0|^2 |\vec{H}|^2 \gamma^p d\mu + \int_\Sigma |\vec{H}|^4 |\vec{h}_0| ^2 \gamma^p d\mu \nonumber 
\\& \leq c \int_\Sigma |\vec{H}|^2 |\pi_{\vec{n}}\nabla\vec{H}|^2 \gamma^p d\mu   +c\int_\Sigma |\vec{h}_0|^2 |\pi_{\vec{n}}\nabla\vec{H}|^2 \gamma^p d\mu +c \int_\Sigma  |\vec{h}_0|^6 \gamma^p d\mu    \nonumber
\\& \quad 
 + c\int_\Sigma |\vec{H}|^2 |\vec{h}_0|^4 \gamma^p d\mu       
+c  \int_\Sigma |\vec{H}|^4 \gamma^{p} d\mu    
  +cc^4_{\gamma} \int_\Sigma |\vec{h}_0|^4 \gamma^{p-4} d\mu    .     
\end{align}

\end{lem}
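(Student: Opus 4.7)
The plan is to test the Simon-type identity \eqref{simonid} against the symmetric vector-valued tensor $|\vec{H}|^2 \vec{h}_0\,\gamma^p$, integrate by parts twice, use the boundary conditions \eqref{bdry} to discard boundary terms, and then absorb via Peter-Paul. The starting point is the pairing
\[
\int_\Sigma (\Delta_\perp\vec{h}_0)_{jk}\cdot (\vec{h}_0)^{jk}|\vec{H}|^2\gamma^p\,d\mu
\]
with the right-hand side of \eqref{simonid}. The key algebraic observation is that tracelessness $g_{jk}(\vec{h}_0)^{jk}=0$ kills the $-g\,\Delta_\perp\vec{H}$ contribution outright, and the $4|\vec{H}|^2\vec{h}_0$ piece pairs with $|\vec{H}|^2\vec{h}_0\gamma^p$ to produce exactly $4\int_\Sigma |\vec{H}|^4|\vec{h}_0|^2\gamma^p\,d\mu$, which is the second quantity we are trying to estimate on the LHS.

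Next I would integrate by parts the two second-order normal Laplacians appearing in the identity. All boundary terms vanish under \eqref{bdry}: from $\vec{h}=\vec{0}$ on $\partial\Sigma$ we get $\vec{H}=\vec{0}$ and hence $\vec{h}_0=\vec{0}$, while $\pi_{\vec{\eta}}\nabla\vec{H}=\vec{0}$ handles the remaining boundary contributions from the $\pi_{\vec{n}}\nabla\pi_{\vec{n}}\nabla\vec{H}$ term. Moving the resulting $\int |\pi_{\vec{n}}\nabla\vec{h}_0|^2|\vec{H}|^2\gamma^p\,d\mu$ to the LHS gives the first desired quantity. The leftover interior pieces are: (i) the mixed derivative term $\int \pi_{\vec{n}}\nabla\vec{H}\cdot\pi_{\vec{n}}\nabla\vec{h}_0\,|\vec{H}|^2\gamma^p\,d\mu$; (ii) cross terms carrying $\nabla|\vec{H}|^2=2\vec{H}\cdot\pi_{\vec{n}}\nabla\vec{H}$; (iii) cubic contributions $(\vec{h}_0\!*\!\vec{h}_0)\!*\!\vec{h}_0$ and $(\vec{H}\!*\!\vec{h}_0)\!*\!\vec{h}_0$ paired with $|\vec{H}|^2\vec{h}_0\gamma^p$; and (iv) cut-off contributions with factor $\gamma^{p-1}\nabla\gamma$.

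The Peter-Paul step handles (i)--(iii) in a routine way: the mixed term is split as $\epsilon|\pi_{\vec{n}}\nabla\vec{h}_0|^2|\vec{H}|^2+C_\epsilon|\pi_{\vec{n}}\nabla\vec{H}|^2|\vec{H}|^2$ with $\epsilon$ small enough to be absorbed into the first LHS quantity; the cubic $(\vec{h}_0\!*\!\vec{h}_0)\!*\!\vec{h}_0$ yields $|\vec{H}|^2|\vec{h}_0|^4$ directly; and $(\vec{H}\!*\!\vec{h}_0)\!*\!\vec{h}_0$ yields $|\vec{H}|^3|\vec{h}_0|^3$, which by Young splits into $|\vec{H}|^2|\vec{h}_0|^4$ (RHS) plus $|\vec{H}|^4|\vec{h}_0|^2$ (absorb into LHS with small coefficient). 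The terms $|\vec{h}_0|^2|\pi_{\vec{n}}\nabla\vec{H}|^2$ and $|\pi_{\vec{n}}\nabla\vec{H}|^2|\vec{H}|^2$ arising from (ii) match the RHS of the lemma directly.

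The main obstacle is (iv). The cut-off cross-terms have the shape $|\pi_{\vec{n}}\nabla\vec{h}_0|\,|\vec{h}_0|\,|\vec{H}|^2\,c_\gamma\,\gamma^{p-1}$ (and similarly with $|\pi_{\vec{n}}\nabla\vec{H}|$ in place of $|\pi_{\vec{n}}\nabla\vec{h}_0|$). I would apply Young with weights $|\pi_{\vec{n}}\nabla\vec{h}_0||\vec{H}|\gamma^{p/2}$ and $c_\gamma|\vec{h}_0||\vec{H}|\gamma^{p/2-1}$; the first square is absorbed into the LHS, and the second square $c_\gamma^2|\vec{h}_0|^2|\vec{H}|^2\gamma^{p-2}$ is subjected to Young a \emph{second} time with weights $|\vec{H}|^2\gamma^{p/2}$ and $c_\gamma^2|\vec{h}_0|^2\gamma^{p/2-2}$, producing precisely $|\vec{H}|^4\gamma^p$ and $c_\gamma^4|\vec{h}_0|^4\gamma^{p-4}$, which are the last two terms on the right-hand side of the lemma. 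Tracking the correct powers of $\gamma$ through this two-step Young argument (so as not to waste any $\gamma$ and still land on $\gamma^{p-4}$ for the cut-off remainder) is the most delicate bookkeeping in the proof. Collecting everything and re-absorbing the small constants completes the estimate.
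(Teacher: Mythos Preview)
Your proposal is correct and follows essentially the same route as the paper: pair the Simon-type identity \eqref{simonid} with $|\vec{H}|^2(\vec{h}_0)^{jk}\gamma^p$, integrate by parts (the paper records this as identity \eqref{useme}), kill boundary terms via \eqref{bdry}, then apply Peter--Paul/Young, including the same two-stage Young argument on the cut-off remainder $c_\gamma^2|\vec{H}|^2|\vec{h}_0|^2\gamma^{p-2}$ to land on $|\vec{H}|^4\gamma^p + c_\gamma^4|\vec{h}_0|^4\gamma^{p-4}$ (this is exactly the paper's step \eqref{lakk}). The only cosmetic difference is that the paper, after the second integration by parts on the $4\pi_{\vec{n}}\nabla_j\pi_{\vec{n}}\nabla_k\vec{H}$ term, invokes Codazzi ($\pi_{\vec{n}}\nabla_j(\vec{h}_0)^{jk}=3\pi_{\vec{n}}\nabla^k\vec{H}$) to obtain the explicit coefficient $12\int|\vec{H}|^2|\pi_{\vec{n}}\nabla\vec{H}|^2\gamma^p$ rather than leaving a schematic $\pi_{\vec{n}}\nabla\vec{H}*\pi_{\vec{n}}\nabla\vec{h}_0$ term to be absorbed, but either route closes.
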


\noindent
\begin{proof}
First, we observe that the boundary condition \eqref{bdry} implies
\begin{align}
&\int_\Sigma |\vec{H}|^2 \Delta_\perp(\vec{h}_0)_{jk} \cdot (\vec{h}_0)^{jk} \gamma^p d\mu +  \int_\Sigma |{\pi_{\vec{n}}} \nabla \vec{h}_0|^2 |\vec{H}|^2 \gamma^p d\mu  \nonumber   + 2 \int_\Sigma (\vec{h}_0)^{jk}\cdot\pi_{\vec{n}} \nabla^i(\vec{h}_0)_{jk} \vec{H}\cdot\nabla_i\vec{H} \gamma^p d\mu  \nonumber
\\& \quad
 +p\int_\Sigma (\vec{h}_0)^{jk} \cdot \pi_{\vec{n}}\nabla^i (\vec{h}_0)_{jk} \nabla_i \gamma \gamma^{p-1} d\mu  \nonumber
\\& =  \int_{\partial \Sigma} (\vec{h}_0)^{jk} \cdot \pi_{\vec{\eta}} \nabla_i (\vec{h}_0)_{jk} |\vec{H}|^2 \gamma^p d\sigma =0.  \label{useme}
\end{align}

\noindent
Using the Simon-type identity  \eqref{simonid} in \eqref{useme} gives 
\begin{align}
&\int_\Sigma |{\pi_{\vec{n}}} \nabla \vec{h}_0|^2 |\vec{H}|^2 \gamma^p d\mu +4 \int_\Sigma |\vec{H}|^4 |\vec{h}_0| ^2 \gamma^p d\mu \nonumber  
\\&=-4\int_\Sigma |\vec{H}|^2  (\vec{h}_0)^{jk}\cdot\pi_{\vec{n}}\nabla_j\pi_{\vec{n}} \nabla_k\vec{H} \gamma^p d\mu  \nonumber +\int_\Sigma |\vec{H}|^2 (\vec{h}_0 *\vec{h}_0) * (\vec{h}_0 *\vec{h}_0) \gamma^p d\mu
+\int_\Sigma|\vec{H}|^2 (\vec{H} *\vec{h}_0*\vec{h}_0*\vec{h}_0) \gamma^p d\mu
\\& \quad    -2 \int_\Sigma (\vec{h}_0)^{jk}\cdot\pi_{\vec{n}} \nabla^i(\vec{h}_0)_{jk} \vec{H}\cdot\nabla_i\vec{H} \gamma^p d\mu   -p\int_\Sigma (\vec{h}_0)^{jk} \cdot \pi_{\vec{n}}\nabla^i (\vec{h}_0)_{jk} \nabla_i \gamma \gamma^{p-1} d\mu       \nonumber
\\&=     8\int_\Sigma (\vec{H}\cdot\nabla_j\vec{H}) (\vec{h}_0)^{jk}\cdot\pi_{\vec{n}}\nabla_k\vec{H}  \gamma^p d\mu +12 \int_{\Sigma} |\vec{H}|^2 |\pi_{\vec{n}}\nabla \vec{H}|^2 \gamma^p d\mu  + \int_\Sigma |\vec{H}|^2 (\vec{h}_0 *\vec{h}_0) * (\vec{h}_0 *\vec{h}_0)   \gamma^p d\mu               \nonumber
\\&\quad + \int_\Sigma |\vec{H}|^2(\vec{H} *\vec{h}_0*\vec{h}_0*\vec{h}_0)    \gamma^p d\mu    -2 \int_\Sigma (\vec{h}_0)^{jk}\cdot\pi_{\vec{n}}\nabla^i (\vec{h}_0)_{jk} \vec{H}\cdot\nabla_i\vec{H} \gamma^p d\mu           \nonumber   
\\
& \quad+ p\int_\Sigma|\vec{H}|^2 \left[4 (\vec{h}_0)^{ij}\cdot\nabla_j\vec{H}  -(\vec{h}_0)^{jk}\cdot \pi_{\vec{n}} \nabla^i (\vec{h}_0)_{jk}   \right]   \nabla_i\gamma \gamma^{p-1}        d\mu     \nonumber
\end{align}

\noindent
where we have used the boundary condition \eqref{bdry} to obtain the second equality above. Next, for $\delta>0$ we estimate
\begin{align}
&\int_\Sigma |{\pi_{\vec{n}}} \nabla \vec{h}_0|^2 |\vec{H}|^2 \gamma^p d\mu +4 \int_\Sigma |\vec{H}|^4 |\vec{h}_0| ^2 \gamma^p d\mu \nonumber  
\\& \leq (\delta+12) \int_\Sigma |\vec{H}|^2 |\pi_{\vec{n}}\nabla\vec{H}|^2 \gamma^p d\mu   +\frac{c}{\delta} \int_\Sigma |\vec{h}_0|^2 |\pi_{\vec{n}}\nabla\vec{H}|^2 \gamma^p d\mu +c\delta \int_\Sigma |\vec{H}|^4 |\vec{h}_0|^2 \gamma^p d\mu  \nonumber
\\& \quad +\frac{c}{\delta} \int_\Sigma  |\vec{h}_0|^6 \gamma^p d\mu  
 + \frac{c}{\delta} \int_\Sigma |\vec{H}|^2 |\vec{h}_0|^4 \gamma^p d\mu       + \delta \int_\Sigma |\vec{H}|^2 |\pi_{\vec{n}}\nabla\vec{h}_0|^2 \gamma^p d\mu               
  +\frac{cc^2_{\gamma}}{\delta} \int_\Sigma |\vec{H}|^2 |\vec{h}_0|^2 \gamma^{p-2} d\mu    .               \label{lakk1}
\end{align}

\noindent
We next estimate
\begin{align}
\frac{cc^2_{\gamma}}{\delta} \int_\Sigma |\vec{H}|^2 |\vec{h}_0|^2 \gamma^{p-2} d\mu \leq    \delta\int_\Sigma |\vec{H}|^4  \gamma^p d\mu  + \frac{cc_\gamma^4}{\delta^3} \int_\Sigma |\vec{h}_0|^4 \gamma^{p-4} d\mu                . \label{lakk}
\end{align}
The proof is finished by using \eqref{lakk} in \eqref{lakk1} and absorbing. 

\end{proof}

\noindent
\begin{lem}  \label{u22}
Let $\vec{\Phi}:\Sigma\rightarrow \mathbb{R}^m$ be an immersion of a 4-dimensional manifold $\Sigma$ whose boundaries satisfy the condition \eqref{bdry} and let $\gamma$ be the cut-off function in \eqref{cutoff} then 
\begin{align}
&\int_\Sigma |\pi_{\vec{n}}\nabla_{(2)}\vec{H}|^2  \gamma^p d\mu+\int_\Sigma |\vec{h}_0|^2 |\vec{h}|^4 \gamma^p d\mu   \nonumber
\\&\leq  c\int_\Sigma |\vec{h}_0|^2 |\pi_{\vec{n}}\nabla\vec{h}_0|^2 \gamma^p d\mu   + c\int_\Sigma |\vec{h}_0|^6 \gamma^p d\mu
+ c \int_\Sigma |\vec{H}|^4  \gamma^p d\mu  + cc_\gamma^4 \int_\Sigma |\vec{h}_0|^4  \gamma^{p-4} d\mu       \nonumber
\\& \quad + c \int_\Sigma |\mathcal{\vec{W}}\cdot \vec{H}| \gamma^p d\mu   + cc^2_\gamma \int_\Sigma |\pi_{\vec{n}}\nabla\vec{h}_0|^2 \gamma^{p-2} d\mu            + 2p\int_\Sigma U^i \gamma^{p-1} \nabla_i \gamma  \,d\mu.               \nonumber
\end{align}
\end{lem}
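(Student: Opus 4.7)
The plan is to obtain Lemma \ref{u22} by combining Lemma \ref{okl1} (which bounds $\int|\pi_{\vec{n}}\nabla_{(2)}\vec{H}|^2\gamma^p d\mu$) and Lemma \ref{okl} (which bounds $\int|\vec{H}|^4|\vec{h}_0|^2\gamma^p d\mu$ and $\int|\vec{H}|^2|\pi_{\vec{n}}\nabla\vec{h}_0|^2\gamma^p d\mu$), tied together by the pointwise algebraic identity $|\vec{h}|^2 = |\vec{h}_0|^2 + 4|\vec{H}|^2$, which follows from $g^{ij}(\vec{h}_0)_{ij}=0$, and reinforced by repeated Peter-Paul absorptions.

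Concretely, I would start from the estimate of Lemma \ref{okl1} and add $\int_\Sigma |\vec{h}_0|^2|\vec{h}|^4 \gamma^p d\mu$ to both sides. Using the identity three times gives
\begin{align*}
|\vec{h}_0|^2|\vec{h}|^4 &= |\vec{h}_0|^6 + 8|\vec{H}|^2|\vec{h}_0|^4 + 16|\vec{H}|^4|\vec{h}_0|^2,\\
|\vec{H}|^2|\vec{h}|^4 &= |\vec{H}|^2|\vec{h}_0|^4 + 8|\vec{H}|^4|\vec{h}_0|^2 + 16|\vec{H}|^6,\\
|\vec{H}|^4|\vec{h}|^2 &= |\vec{H}|^4|\vec{h}_0|^2 + 4|\vec{H}|^6,
\end{align*}
so the LHS becomes the target of Lemma \ref{u22} while the $|\vec{H}|$-mixed products on the RHS of Lemma \ref{okl1} reduce, modulo the already-admissible summand $\int|\vec{h}_0|^6$, to the unwanted pieces $|\vec{H}|^2|\vec{h}_0|^4$, $|\vec{H}|^4|\vec{h}_0|^2$, $|\vec{H}|^6$, together with the two gradient terms $|\vec{h}_0|^2|\pi_{\vec{n}}\nabla\vec{H}|^2$ and $|\vec{H}|^2|\pi_{\vec{n}}\nabla\vec{H}|^2$.

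Next, I would invoke Lemma \ref{okl} to control $\int|\vec{H}|^4|\vec{h}_0|^2\gamma^p d\mu$ by the target-RHS quantities $\int|\vec{H}|^4$, $\int|\vec{h}_0|^6$, $c_\gamma^4\int|\vec{h}_0|^4\gamma^{p-4}$, plus some gradient pieces and a residual $\int|\vec{H}|^2|\vec{h}_0|^4$. The residual cross-term is absorbed via the Peter-Paul split
$$|\vec{H}|^2|\vec{h}_0|^4 \;\leq\; \tfrac{1}{2}|\vec{h}_0|^6 + \tfrac{1}{2}|\vec{H}|^4|\vec{h}_0|^2,$$
which feeds back into Lemma \ref{okl}. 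For the gradient pieces I would use the Codazzi-Mainardi identity $\pi_{\vec{n}}\nabla^i(\vec{h}_0)_{ij}=3\pi_{\vec{n}}\nabla_j\vec{H}$ recorded in Section \ref{two84673}, which yields $|\pi_{\vec{n}}\nabla\vec{H}|^2 \leq \tfrac{1}{9}|\pi_{\vec{n}}\nabla\vec{h}_0|^2$ and converts them into $\int|\vec{h}_0|^2|\pi_{\vec{n}}\nabla\vec{h}_0|^2$ (target RHS) and $\int|\vec{H}|^2|\pi_{\vec{n}}\nabla\vec{h}_0|^2$ (controlled again by Lemma \ref{okl}).

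The main obstacle is the $|\vec{H}|^6$ contribution, which is absent from the target RHS: the naive bound $|\vec{H}|^2 \leq |\vec{h}|^2/4$ only yields tautologies. The intended resolution is a Peter-Paul estimate of the form $|\vec{H}|^6 \leq \delta|\vec{H}|^4|\vec{h}_0|^2 + C_\delta|\vec{H}|^4$ valid after exploiting the smallness hypotheses $\int|\vec{h}|^2 d\mu, \int|\vec{h}|^4 d\mu \leq \varepsilon$, which reduces everything to the $|\vec{H}|^4|\vec{h}_0|^2$ piece already treated by Lemma \ref{okl} and the $|\vec{H}|^4$ piece present in the target RHS. The delicate arithmetic is then to track the constants throughout so that each Peter-Paul coefficient is strictly below $1$ and the Lemma \ref{okl1}--Lemma \ref{okl} coupling does not cycle; absorbing back into $\int|\pi_{\vec{n}}\nabla_{(2)}\vec{H}|^2\gamma^p d\mu$ on the LHS closes the estimate.
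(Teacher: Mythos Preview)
Your overall route---combine Lemma~\ref{okl1} with Lemma~\ref{okl}, expand the mixed curvature powers via $|\vec{h}|^2=|\vec{h}_0|^2+4|\vec{H}|^2$, and convert the $|\pi_{\vec{n}}\nabla\vec{H}|$ pieces into $|\pi_{\vec{n}}\nabla\vec{h}_0|$ through Codazzi---is exactly the paper's.  The paper's own proof amounts to the one sentence ``combine Lemmas~\ref{okl1} and~\ref{okl}, use $\vec{h}_0=\vec{h}-g\vec{H}$, and apply the pointwise estimates \eqref{est12}.''

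The genuine gap is the step you yourself flag, the $\int_\Sigma|\vec{H}|^6\gamma^p\,d\mu$ term.  Your proposed Peter--Paul split $|\vec{H}|^6\le\delta|\vec{H}|^4|\vec{h}_0|^2+C_\delta|\vec{H}|^4$ is not a valid pointwise inequality: at any umbilic point $\vec{h}_0=0$ while $\vec{H}$ may be large, and the inequality degenerates to $|\vec{H}|^6\le C_\delta|\vec{H}|^4$, which fails once $|\vec{H}|^2>C_\delta$.  Invoking the global smallness hypotheses $\int|\vec{h}|^2,\int|\vec{h}|^4\le\varepsilon$ is illegitimate here, since Lemma~\ref{u22} is stated and used without them.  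The paper instead disposes of $|\vec{H}|^6$ (and simultaneously of $|\vec{H}|^2|\vec{h}|^4$, $|\vec{H}|^4|\vec{h}|^2$, $|\vec{H}|^2|\vec{h}_0|^4$) in one stroke by invoking the pointwise chain $|\vec{H}|\le|\vec{h}|\le c|\vec{h}_0|$ of \eqref{est12}, which sends every such term directly into $c\int_\Sigma|\vec{h}_0|^6\gamma^p\,d\mu$, already present on the target right-hand side; no absorption arithmetic is needed.  You should note, though, that the second inequality in \eqref{est12} fails at umbilic points for the very same reason your split does, so the paper's own step here is equally fragile; a clean repair for both arguments is simply to carry an extra $c\int_\Sigma|\vec{H}|^6\gamma^p\,d\mu$ on the right-hand side of Lemma~\ref{u22}, which is harmless downstream since Lemma~\ref{noncy1} already contains $c\int_\Sigma|\vec{h}|^6\gamma^p\,d\mu$.
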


\noindent
\begin{proof}
We first observe that the decomposition $\vec{h}_0= \vec{h}-\vec{H}g$ yields the estimate
\begin{align}
\int_\Sigma |\vec{h}_0|^2 |\vec{h}|^4 \gamma^p d\mu
\leq c \int_\Sigma |\vec{h}_0|^6 \gamma^p d\mu+ c\int_\Sigma  |\vec{H}|^4 |\vec{h}_0|^2 \gamma^p d\mu .    \nonumber 
\end{align}
Combining the latter with Lemma \ref{okl1} and Lemma \ref{okl}, and using  the estimates
\begin{align}
|\vec{H}|\leq |\vec{h}|\leq c|\vec{h}_0|   \quad\mbox{and}\quad |\nabla\vec{H}|\leq |\nabla\vec{h}|\leq c|\nabla\vec{h}_0| ,                 \label{est12}
\end{align}

\noindent
we obtain
\begin{align}
&\int_\Sigma |\pi_{\vec{n}}\nabla_{(2)}\vec{H}|^2  \gamma^p d\mu+\int_\Sigma |\vec{h}_0|^2 |\vec{h}|^4 \gamma^p d\mu   \nonumber
\\&\leq  c \int_\Sigma |\mathcal{\vec{W}}\cdot \vec{H}| \gamma^p d\mu + c\int_\Sigma |\vec{h}_0|^2 |\pi_{\vec{n}}\nabla\vec{h}_0|^2 \gamma^p d\mu   + c\int_\Sigma |\vec{h}_0|^6 \gamma^p d\mu
+ c \int_\Sigma |\vec{H}|^4  \gamma^p d\mu         \nonumber
\\& \quad + cc_\gamma^4 \int_\Sigma |\vec{h}_0|^4  \gamma^{p-4} d\mu   + cc^2_\gamma \int_\Sigma |\pi_{\vec{n}}\nabla\vec{h}_0|^2 \gamma^{p-2} d\mu            + 2p\int_\Sigma U^i \gamma^{p-1} \nabla_i \gamma  \,d\mu.               \nonumber
\end{align}

\end{proof}

\noindent
\begin{lem} \label{noncy1}
Let $\vec{\Phi}:\Sigma\rightarrow \mathbb{R}^m$ be an immersion of a 4-dimensional manifold $\Sigma$ whose boundaries satisfy the condition \eqref{bdry} and let $\gamma$ be the cut-off function in \eqref{cutoff} then 
\begin{align}
&\int_\Sigma (|\pi_{\vec{n}}\nabla_{(2)}\vec{h}|^2 +|\vec{h}|^4|\vec{h}_0|^2 +|\vec{h}|^2|\pi_{\vec{n}}\nabla\vec{h}|^2 ) \gamma^p d\mu                                    \nonumber
\\& \quad \leq  c\int_\Sigma |\mathcal{\vec{W}}\cdot \vec{H}| \gamma^p d\mu + c\int_{\Sigma} |\vec{h}_0|^2|\pi_{\vec{n}}\nabla\vec{h}_0|^2 \gamma^p d\mu  + c\int_\Sigma |\vec{h}|^6 \gamma^p d\mu  +c\int_\Sigma |\vec{H}|^4 \gamma^p d\mu                   \nonumber
\\&\quad\quad + cc_\gamma^4 \int_\Sigma |\vec{h}|^4 \gamma^{p-4} d\mu + cc_\gamma^2 \int_\Sigma |\pi_{\vec{n}}\nabla\vec{h}_0|^2 \gamma^{p-2} d\mu       + 2p\int_\Sigma U^i\gamma^{p-1} \nabla_i \gamma d\mu       .                      \label{noncy}
\end{align}
\end{lem}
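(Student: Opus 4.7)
The plan is to reduce Lemma \ref{noncy1} to Lemmas \ref{u22} and \ref{okl} by expanding $\vec{h}=\vec{h}_0+\vec{H}g$ in every integrand on the left-hand side of \eqref{noncy} (using that $\nabla g=0$), and then using the Simon-type identity \eqref{simonid} to trade second normal derivatives of $\vec{h}_0$ against second normal derivatives of $\vec{H}$ plus cubic curvature pieces.

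For the purely algebraic integrands, the elementary inequalities $|\vec{h}|^{2k}\leq c(|\vec{h}_0|^{2k}+|\vec{H}|^{2k})$ and $|\pi_{\vec{n}}\nabla\vec{h}|^{2}\leq c|\pi_{\vec{n}}\nabla\vec{h}_0|^{2}+c|\pi_{\vec{n}}\nabla\vec{H}|^{2}$ yield pointwise
\[
|\vec{h}|^4|\vec{h}_0|^2\leq c|\vec{h}_0|^6+c|\vec{H}|^4|\vec{h}_0|^2,
\]
\[
|\vec{h}|^2|\pi_{\vec{n}}\nabla\vec{h}|^2\leq c(|\vec{h}_0|^2+|\vec{H}|^2)\bigl(|\pi_{\vec{n}}\nabla\vec{h}_0|^2+|\pi_{\vec{n}}\nabla\vec{H}|^2\bigr).
\]
The cross terms containing $|\vec{H}|$ are precisely those controlled by Lemma \ref{okl} (and by the intermediate Lemma \ref{laks2} used in its proof), while the remaining ones already appear on the right of \eqref{noncy} or can be absorbed via Peter-Paul.

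For the leading second-derivative term I would first split pointwise $|\pi_{\vec{n}}\nabla_{(2)}\vec{h}|^2\leq c|\pi_{\vec{n}}\nabla_{(2)}\vec{h}_0|^2+c|\pi_{\vec{n}}\nabla_{(2)}\vec{H}|^2$; the $\vec{H}$-piece is controlled directly by Lemma \ref{u22}. For the $\vec{h}_0$-piece I would integrate by parts twice against $\gamma^p$, using $\vec{h}=\vec{0}$ on $\partial\Sigma$ (whence $\vec{h}_0=\vec{0}$) together with $\pi_{\vec{\eta}}\nabla\vec{H}=\vec{0}$ and the Codazzi-Mainardi relation $\pi_{\vec{n}}\nabla^{i}(\vec{h}_0)_{ij}=3\pi_{\vec{n}}\nabla_j\vec{H}$ to annihilate the boundary contributions, so that
\[
\int_\Sigma|\pi_{\vec{n}}\nabla_{(2)}\vec{h}_0|^2\gamma^p\,d\mu\leq c\int_\Sigma|\Delta_\perp\vec{h}_0|^2\gamma^p\,d\mu+cc_\gamma^2\int_\Sigma|\pi_{\vec{n}}\nabla\vec{h}_0|^2\gamma^{p-2}\,d\mu+\text{(commutators)}.
\]
The Simon-type identity \eqref{simonid} then bounds $|\Delta_\perp\vec{h}_0|^2$ pointwise by $c|\pi_{\vec{n}}\nabla_{(2)}\vec{H}|^2+c|\vec{h}_0|^6+c|\vec{H}|^2|\vec{h}_0|^4+c|\vec{H}|^4|\vec{h}_0|^2$, and each of these is absorbed via Lemma \ref{u22} or Lemma \ref{okl}. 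Collecting all of the estimates and reabsorbing the cutoff contributions produces \eqref{noncy}.

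The hard part is the careful bookkeeping of the Riemann commutators produced by the double integration by parts in the normal bundle: via the Gauss equation these unpack into cubics in $\vec{h}$ contracted against $|\pi_{\vec{n}}\nabla\vec{h}_0|^2$, and one has to check that every residual cross term --- in particular $|\vec{H}|^2|\pi_{\vec{n}}\nabla\vec{H}|^2$ and $|\vec{H}|^2|\pi_{\vec{n}}\nabla\vec{h}_0|^2$ --- ultimately fits into one of the $|\vec{h}_0|^2|\pi_{\vec{n}}\nabla\vec{h}_0|^2$, $|\vec{h}_0|^6$ or $|\vec{H}|^4$ budgets already appearing on the right-hand side of \eqref{noncy}, rather than re-creating an uncontrolled $|\pi_{\vec{n}}\nabla_{(2)}\vec{H}|^2$ contribution that cannot be absorbed.
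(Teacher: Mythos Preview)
Your strategy is correct and essentially matches the paper's: both reduce the $|\pi_{\vec{n}}\nabla_{(2)}\vec{h}|^2$-integral to a $|\Delta_\perp|^2$-term via the interchange-of-derivatives formula \eqref{multi}, apply the Simon-type identity to land on $|\pi_{\vec{n}}\nabla_{(2)}\vec{H}|^2$ plus cubic curvature pieces, and then feed everything into Lemma~\ref{u22}. The only organizational difference is that the paper works with the full $\vec{h}$ throughout and invokes the decomposition $\vec{h}=\vec{h}_0+\vec{H}g$ only at the very end, whereas you split at the outset; the commutator bookkeeping you flag as the hard part is precisely what the paper carries out term by term.
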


\noindent
\begin{proof}
We begin by interchanging covariant derivatives (see Appendix \ref{interchange} for full computation)
\begin{align}
\Delta_\perp\pi_{\vec{n}} \nabla_j \vec{h}_{kl} &    =   \pi_{\vec{n}} \nabla_j \Delta_\perp \vec{h}_{kl} + \pi_{T} \nabla_j \Delta_\perp \vec{h}_{kl} +\nabla_j \pi_T \nabla_i \pi_{\vec{n}} \nabla^i \vec{h}_{kl}      +  \nabla_j \nabla_i \pi_{T} \nabla^i \vec{h}_{kl}  \nonumber
\\&   \quad\quad+ (\vec{h}* \pi_{\vec{n}}\nabla\vec{h} * \vec{h} + \vec{h}*  \vec{h}*\pi_{\vec{n}}\nabla\vec{h} )_{jkl}   \nonumber
\\& \quad \quad - \pi_{T} \Delta  \nabla_j \vec{h}_{kl}    - \pi_{\vec{n}} \nabla_i\pi_{T} \nabla^i \nabla_j \vec{h}_{kl}  - \pi_{\vec{n}} \nabla_i\pi_{\vec{n}} \nabla^i \pi_{T} \nabla_j \vec{h}_{kl}  . \label{multi}
\end{align}

\noindent
Next, we integrate by parts each term of \eqref{multi} against $\pi_{\vec{n}}\nabla\vec{h}  \gamma^p$,  use the divergence theorem together with the boundary conditions \eqref{bdry} and then apply the Peter-Paul inequality. We have for $\delta >0$  the following estimates
\begin{align}
 \int_\Sigma |\pi_{\vec{n}}\nabla_{(2)}\vec{h}|^2 \gamma^p d\mu               &=-\int_\Sigma \Delta_\perp \pi_{\vec{n}} \nabla_j \vec{h}_{kl} \cdot \pi_{\vec{n}} \nabla^j \vec{h}^{kl} \gamma^p d\mu   +\int_\Sigma \pi_{\vec{n}} \nabla_{(2)} \vec{h}  *\pi_{\vec{n}} \nabla\vec{h} \nabla\gamma^p d\mu                     \nonumber
\\&\leq -\int_\Sigma \Delta_\perp \pi_{\vec{n}} \nabla_j \vec{h}_{kl} \cdot \pi_{\vec{n}} \nabla^j \vec{h}^{kl} \gamma^p d\mu  + \delta \int_\Sigma |\pi_{\vec{n}}\nabla_{(2)}\vec{h}|^2 \gamma^p d\mu + \frac{cc_\gamma^2}{\delta}\int_\Sigma |\pi_{\vec{n}}\nabla\vec{h}|^2 \gamma^{p-2} d\mu               \nonumber
\end{align}
and
\begin{align}
\int_\Sigma \pi_{\vec{n}} \nabla_j \Delta_\perp \vec{h}_{kl} \cdot \pi_{\vec{n}} \nabla^j \vec{h}^{kl} \gamma^p d\mu 
&= -\int_\Sigma |\Delta_\perp \vec{h}|^2 \gamma^p d\mu - \int_\Sigma \Delta_\perp \vec{h}_{kl} \cdot \pi_{\vec{n}} \nabla^j \vec{h}^{kl} \nabla_j\gamma^p d\mu     \nonumber
\\& \leq    c \int_\Sigma |\Delta_\perp \vec{h}|^2 \gamma^p d\mu  +cc_\gamma^2 \int_\Sigma |\pi_{\vec{n}}\nabla\vec{h}|^2 \gamma^{p-2} d\mu      .     \nonumber
\end{align}
In a similar fashion, we have the estimate
\begin{align}
\int_\Sigma \nabla_j \pi_T \nabla_i \pi_{\vec{n}} \nabla^i \vec{h}_{kl} \cdot \pi_{\vec{n}} \nabla^j \vec{h}^{kl} \gamma^p d\mu & = -\int_\Sigma \pi_T \nabla_i \pi_{\vec{n}} \nabla^i \vec{h}_{kl}\cdot   \pi_T \nabla_j \pi_{\vec{n}} \nabla^j \vec{h}_{kl}           \gamma^p d\mu                   \nonumber
\\&= -\int_\Sigma (\vec{h}_{im}\cdot \pi_{\vec{n}}\nabla^i \vec{h}_{kl}) (\vec{h}_j^m \cdot\pi_{\vec{n}}\nabla^j \vec{h}^{kl}) \gamma^p d\mu                         \nonumber
\\& \leq c\int_\Sigma |\vec{h}|^2 |\pi_{\vec{n}}\nabla\vec{h}|^2    \gamma^p d\mu               \nonumber
\end{align}
and
\begin{align}
\int_\Sigma  \nabla_j  \nabla_i \pi_T \nabla^i \vec{h}_{kl} \cdot \pi_{\vec{n}} \nabla^j\vec{h}^{kl} \gamma^p d\mu 
&= -\int_\Sigma \nabla_i\pi_T \nabla^i \vec{h}_{kl} \cdot \nabla_j \pi_{\vec{n}} \nabla^j \vec{h}^{kl} \gamma^p d\mu 
-p\int _\Sigma \nabla_i\pi_T \nabla^i \vec{h}_{kl} \cdot \pi_{\vec{n}} \nabla^j \vec{h}^{kl} \nabla_j\gamma \gamma^{p-1} d\mu  \nonumber  
\\& =  -\int_\Sigma (\vec{h}_{kl}\cdot \vec{h}^{im}) (\Delta_\perp \vec{h}^{kl}\cdot\vec{h}_{im}) \gamma^p d\mu + \int_\Sigma \nabla_i(\vec{h}_{kl}\cdot \vec{h}^{im}) (\pi_{\vec{n}}\nabla^j\vec{h}^{kl}\cdot\vec{h}_{jm})  \gamma^p d\mu                                \nonumber
\\&\quad \quad+  p\int_\Sigma (\vec{h}_{kl}\cdot\vec{h}^{im}) (\vec{h}_{im}\cdot\pi_{\vec{n}}\nabla^j\vec{h}^{kl}) \nabla_j \gamma \gamma^{p-1} d\mu                  \nonumber
\\&\leq      c\int_\Sigma |\vec{h}|^6 \gamma^p d\mu + c\int_\Sigma |\Delta_\perp\vec{h}|^2 \gamma^p d\mu +       c\int_\Sigma |\vec{h}|^2 |\pi_{\vec{n}}\nabla\vec{h}|^2    \gamma^p d\mu    \nonumber
\\&\quad\quad  +cc_\gamma^2 \int_\Sigma |\pi_{\vec{n}}\nabla\vec{h}| ^2 \gamma^{p-2} d\mu .               \nonumber
\end{align}
By the same token, we have the estimate
\begin{align}
\int_\Sigma \pi_{\vec{n}}\nabla_i \pi_T \nabla^i \nabla_j \vec{h}_{kl}\cdot \pi_{\vec{n}} \nabla^j\vec{h}^{kl} \gamma^p d\mu &= -\int_\Sigma \pi_T \nabla^i\nabla_j \vec{h}_{kl} \cdot \nabla_i \pi_{\vec{n}} \nabla^j\vec{h}^{kl}  \gamma^p d\mu                        \nonumber
\\& = -\int_\Sigma (\vec{h}^{im}\cdot\pi_{\vec{n}}\nabla^j\vec{h}^{kl}) (\vec{h}_{im}\cdot\pi_{\vec{n}}\nabla_j\vec{h}_{kl}) \gamma^p d\mu             \nonumber
\\& \leq          c\int_\Sigma |\vec{h}|^2 |\pi_{\vec{n}}\nabla\vec{h}|^2    \gamma^p d\mu              \nonumber
\end{align}
and
\begin{align}
\int_\Sigma \pi_{\vec{n}}\nabla_i \pi_{\vec{n}}\nabla^i \pi_T \nabla_j \vec{h}_{kl} \cdot\pi_{\vec{n}} \nabla^j \vec{h}^{kl} \gamma^p d\mu &= - \int_\Sigma \pi_{\vec{n}} \nabla^i \pi_T \nabla_j \vec{h}_{kl} \cdot \pi_{\vec{n}}\nabla_i\pi_{\vec{n}} \nabla^j \vec{h}^{kl} \gamma^p d\mu  \nonumber
\\&\quad\quad  - p\int_\Sigma \pi_{\vec{n}} \nabla^i \pi_T \nabla_j \vec{h}_{kl}\cdot \pi_{\vec{n}} \nabla^j \vec{h}^{kl} \nabla_i \gamma \gamma^{p-1} d\mu   \nonumber
\\& =  \int_\Sigma  (\vec{h}_{kl}\cdot\vec{h}_{jm}) (\vec{h}^{im}\cdot\pi_{\vec{n}}\nabla_i\pi_{\vec{n}}\nabla^j\vec{h}^{kl})  \gamma^p d\mu  \nonumber
\\&\quad\quad +p\int_\Sigma (\vec{h}_{kl}\cdot\vec{h}_{jm})(\vec{h}^{im}\cdot\pi_{\vec{n}}\nabla^j\vec{h}^{kl})\nabla_i \gamma \gamma^{p-1} d\mu   \nonumber 
\\&\leq  \frac{c}{\delta}\int_\Sigma |\vec{h}|^6 \gamma^p d\mu  + \delta\int_\Sigma |\pi_{\vec{n}}\nabla_{(2)}\vec{h}|^2 \gamma^p d\mu            \nonumber
+ cc_\gamma^2\int_\Sigma  |\pi_{\vec{n}}\nabla\vec{h}|^2 \gamma^{p-2} d\mu   .  \nonumber
\end{align}
Finally, we have the estimate
\begin{align}
\int_\Sigma (\vec{h}* \pi_{\vec{n}}\nabla\vec{h} * \vec{h} + \vec{h}*  \vec{h}*\pi_{\vec{n}}\nabla\vec{h} )_{jkl} \cdot\pi_{\vec{n}} \nabla^j \vec{h}^{kl} \gamma^p d\mu  \leq c\int_\Sigma |\vec{h}|^2 |\pi_{\vec{n}}\nabla\vec{h}|^2 \gamma^p d\mu     \nonumber
\end{align}

\noindent
Combining all the estimates above  and absorbing yields
\begin{align}
  \int_\Sigma |\pi_{\vec{n}}\nabla_{(2)}\vec{h}|^2 \gamma^p d\mu   
 &\leq c\int_\Sigma |\Delta_\perp \vec{h}   |^2 \gamma^p d\mu          \nonumber
      + c\int_\Sigma |\vec{h}|^2 |\pi_{\vec{n}}\nabla\vec{h}|^2    \gamma^p d\mu               
  +c\int_\Sigma |\vec{h}|^6 \gamma^p d\mu  +   cc_\gamma^2\int_\Sigma |\pi_{\vec{n}}\nabla\vec{h}|^2 \gamma^{p-2} d\mu    .           \nonumber                     
\end{align}

\noindent
The Simon-type identity \eqref{symm6}  implies
\begin{align}
|\Delta_\perp \vec{h}|^2 \leq c|\pi_{\vec{n}}\nabla_{(2)}\vec{H}|^2 + c|\vec{H}|^2 |\vec{h}|^4 + c|\vec{h}|^6  \nonumber
\end{align}
which allows us obtain the estimate
\begin{align}
&\int_\Sigma |\pi_{\vec{n}}\nabla_{(2)}\vec{h}|^2 \gamma^p d\mu   +\int_\Sigma|\vec{h}|^4 |\vec{h}_0|^2 \gamma^p d\mu +\int_\Sigma |\vec{h}|^2|\pi_{\vec{n}}\nabla\vec{h}|^2 \gamma^p d\mu \nonumber
 \\&\leq c\int_\Sigma |\pi_{\vec{n}}\nabla_{(2)}\vec{H}|^2 \gamma^p d\mu          \nonumber
      + c\int_\Sigma |\vec{h}|^2 |\pi_{\vec{n}}\nabla\vec{h}|^2    \gamma^p d\mu + c\int_\Sigma |\vec{H}|^2 |\vec{h}|^4 \gamma^p d\mu    \nonumber
\\&  \quad          
  +c\int_\Sigma |\vec{h}|^6 \gamma^p d\mu  +   cc_\gamma^2\int_\Sigma |\pi_{\vec{n}}\nabla\vec{h}|^2 \gamma^{p-2} d\mu              
 +\int_\Sigma|\vec{h}|^4 |\vec{h}_0|^2 \gamma^p d\mu +\int_\Sigma |\vec{h}|^2|\pi_{\vec{n}}\nabla\vec{h}|^2 \gamma^p d\mu  .  \nonumber
\end{align}
\noindent
Using  Lemma \ref{u22}, the decomposition $\vec{h}_0= \vec{h}-g\vec{H}$ and the estimates \eqref{est12} finishes the proof.

\end{proof}

\noindent
Our goal is to estimate the terms on the right hand side of inequality \eqref{noncy} of Lemma \ref{noncy1} with regards to the $L^2$ and $L^4$ norms of the second fundamental form. We will use the following Michael-Sobolev inequality in order to estimate these terms.   As noted in \cite {xavier1}, the following statement does not depend on the topology of the manifold $M$ .

\begin{theo}[\cite{allard, xavier1, xavier, hoffman, michael, glen}] \label{sobolevineq}
Let $\vec{\Phi}: M \rightarrow \mathbb{R}^n$ be a smooth immersion of the $m$-dimensional manifold $M$ with boundary $\partial M$ into $\mathbb{R}^n$. Then for any $u\in C^1(\overline{M})$, there exists a positive constant $K$, depending on $m$ and $r$ such that 
\begin{align}
\left(\int_M |u|^{\frac{mp}{m-r}} d\mu\right)^{\frac{m-r}{m}} \leq K\int_M (|\nabla u|^r + |Hu|^r) \, d\mu + K \int_{\partial M} |u|^r d\sigma. \label{sob}
\end{align} 
\end{theo}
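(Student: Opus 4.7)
The plan is to prove this classical Michael--Simon--Sobolev inequality in two stages: first, reduce the general $r \in [1,m)$ case to the base case $r=1$, and then prove the base case via a Michael--Simon-type monotonicity argument for submanifolds with boundary. For the reduction, I would assume the $r=1$ statement in the form
\[
\bigl(\textstyle\int_M |v|^{m/(m-1)} d\mu\bigr)^{(m-1)/m} \leq K_0\int_M (|\nabla v| + |H\,v|)\, d\mu + K_0\int_{\partial M}|v|\,d\sigma
\]
and apply it to $v = |u|^{\alpha}$ with the exponent $\alpha = (m-1)r/(m-r)$ chosen so that $\alpha\,m/(m-1) = mr/(m-r)$. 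Bounding $|\nabla v| \leq \alpha |u|^{\alpha-1}|\nabla u|$ and applying H\"older's inequality with conjugate exponents that split $|u|^{\alpha-1}$ from $|\nabla u|$ (and analogously on the $H u$ and boundary pieces) yields one factor of $\|u\|_{L^{mr/(m-r)}}^{\alpha-1}$ on the right, which absorbs into the left-hand side and leaves precisely the desired inequality with the claimed constant structure.

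For the base case $r=1$ I would follow the original Michael--Simon strategy. Fix $x_0 \in \mathbb{R}^n$ and apply the first-variation identity on $M$ to the radial test vector field $X(x) = \phi(|x-x_0|)(x-x_0)\,u(x)$ for a family of cut-offs $\phi$ approximating $\mathbf{1}_{[0,\rho]}$. Using the elementary submanifold identity $\mathrm{div}_M(x-x_0)=m$ together with
\[
\int_M \mathrm{div}_M X \,d\mu = -\int_M \langle X^\perp, m\vec{H}\rangle\,d\mu + \int_{\partial M}\langle X, \vec{\eta}\rangle\, d\sigma,
\]
one derives a differential inequality for $F(\rho):=\int_{M\cap B_\rho(x_0)} |u|\,d\mu$ that integrates to a monotonicity statement. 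Passing $\rho \downarrow 0$ yields a pointwise density bound at $x_0$, controlling $|u(x_0)|$ in terms of the full right-hand side of \eqref{sob} with $r=1$. Finally, one upgrades this $L^\infty$-type density estimate to the $L^{m/(m-1)}$ bound through the layer-cake decomposition $\int_M |u|^{m/(m-1)}\,d\mu = \tfrac{m}{m-1}\int_0^\infty t^{1/(m-1)}|\{|u|>t\}|\,dt$, applying the density bound to each superlevel set $\{|u|>t\}$ in conjunction with a Vitali-type covering of $\mathrm{supp}\,u$ by geodesic balls.

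The main technical obstacle will be the proper treatment of the boundary contribution. When $B_\rho(x_0)$ meets $\partial M$, the divergence identity above produces an extra flux term $\int_{\partial M \cap B_\rho(x_0)} u\,\langle x-x_0,\vec{\eta}\rangle\,d\sigma$ which must be absorbed into the term $K\int_{\partial M} |u|^r\,d\sigma$ of the statement without damaging the sharp exponent $mr/(m-r)$. This is customarily handled either by a symmetric doubling of $M$ across $\partial M$ (so the monotonicity argument effectively runs on a closed manifold with a modified mean-curvature measure carrying a singular part supported on $\partial M$) or by treating the boundary flux as an additional measure-valued source and repeating the covering argument on $\overline{M}$. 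A secondary point is that the monotonicity argument presupposes $u \geq 0$; this is handled by splitting $u = u_+ - u_-$ and observing that $|u|$ is Lipschitz, so the divergence identity remains valid in the distributional sense. Once these two ingredients are in place, the claimed constant $K$ depending only on $m$ and $r$ emerges from tracking the (sharp) exponent of $\rho$ in the monotonicity step together with H\"older's inequality in the reduction.
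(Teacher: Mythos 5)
You should first note that the paper contains no proof of Theorem \ref{sobolevineq} at all: it is imported from the literature (Michael--Simon, Hoffman--Spruck, Allard, etc.), so there is no internal argument to match; your proposal is a reconstruction of the classical proof. Your base case $r=1$ is indeed the standard Michael--Simon monotonicity argument, and with the boundary flux $\int_{\partial M\cap B_\rho}u\,\langle x-x_0,\vec\eta\rangle\,d\sigma$ treated as part of the first-variation measure it does yield $\bigl(\int_M|u|^{m/(m-1)}d\mu\bigr)^{(m-1)/m}\le K\int_M(|\nabla u|+|Hu|)\,d\mu+K\int_{\partial M}|u|\,d\sigma$; that part of the plan is sound (modulo the usual truncation technicalities).

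The genuine gap is in the reduction to general $r$. Applying the $r=1$ inequality to $v=|u|^{\alpha}$ with $\alpha=r(m-1)/(m-r)$ produces the boundary term $\int_{\partial M}|u|^{\alpha}\,d\sigma$, and H\"older with respect to $d\sigma$ turns this into $\|u\|^{\alpha-1}_{L^{q}(\partial M,\,d\sigma)}\,\bigl(\int_{\partial M}|u|^{r}d\sigma\bigr)^{1/r}$, i.e.\ the extra factor is a \emph{boundary trace} norm, not the interior factor $\|u\|^{\alpha-1}_{L^{mr/(m-r)}(M,\,d\mu)}$ that you intend to absorb into the left-hand side; there is no way to trade one for the other. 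In fact the inequality in the exact form stated --- additive boundary term $K\int_{\partial M}|u|^{r}d\sigma$ with $K=K(m,r)$ and $r>1$ --- is false, so no completion of your argument can reach it: take $M$ a flat $4$-disk of radius $R$ (so $\vec H=0$), $u\equiv1$, $m=4$, $r=2$; the left side scales like $R^{2}$ while the right side scales like $KR^{3}$, which fails as $R\to0$ (the same scaling mismatch, $R^{m-r}$ versus $R^{m-1}$, occurs for every $r>1$). The scaling-consistent statement carries the boundary term $\bigl(\int_{\partial M}|u|^{r(m-1)/(m-r)}d\sigma\bigr)^{(m-r)/(m-1)}$ (equivalently, keeps $\int_{\partial M}|u|^{\alpha}d\sigma$ unabsorbed), or assumes $u$ vanishes on $\partial M$ as in Hoffman--Spruck; also the exponent $mp/(m-r)$ in \eqref{sob} should read $mr/(m-r)$. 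So your outline proves the corrected theorem, but the step claiming the boundary piece is handled ``analogously'' by H\"older and absorption is exactly where the argument, and indeed the statement as printed, breaks down.
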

\noindent
If $M$ is minimal with no boundary, then one obtains the  Sobolev embedding: $W^{1,p}\subseteq L^{p^\star}$, where $p^\star$ is the Sobolev conjugate.  Note that we have added a boundary term in \eqref{sob} because we are working on manifolds with boundary. In our setting, the dimension $m$ is 4 and we may choose $r=2$ so that inequality \eqref{sob} reduces to
\begin{align}
\left(\int_M |u|^4 d\mu\right)^{\frac{1}{2}} \leq K\int_M (|\nabla u|^2 + |Hu|^2) \, d\mu + K \int_{\partial M} |u|^2 d\sigma. \nonumber
\end{align}

\noindent
Our next lemma presents the estimate of the non $c_\gamma$ terms on the right hand side of the inequality \eqref{noncy} in Lemma \ref{noncy1}. Bearing in mind the inequalities in \eqref{est12} and the decomposition $\vec{h}_0=\vec{h}-g\vec{H}$, one verifies that $|\vec{h}_0|^2|\pi_{\vec{n}}\nabla\vec{h}_0|^2 \leq c|\vec{h}|^2|\pi_{\vec{n}}\nabla\vec{h}|^2$ and so we prove the following.
\begin{lem} \label{b2}
Let $\vec{\Phi}:\Sigma\rightarrow \mathbb{R}^m$ be an immersion of a 4-dimensional manifold $\Sigma$ whose boundaries satisfy the condition \eqref{bdry} and let $\gamma$ be the cut-off function in \eqref{cutoff} then 
\begin{align}
&\int_\Sigma |\vec{h}|^2 |\pi_{\vec{n}}\nabla\vec{h}|^2 \gamma^p d\mu +\int_\Sigma |\vec{h}|^4 \gamma^p d\mu  +\int_\Sigma |\vec{h}|^6\gamma^p d\mu   \nonumber
\\& \leq c\|\vec{h}\|_{L^4,[\gamma>0]}^2\left[\int_\Sigma |\pi_{\vec{n}}\nabla_{(2)}\vec{h}|^2 \gamma^p d\mu   +  c_\gamma^4\int_\Sigma |\vec{h}|^2 \gamma^{p-4} d\mu  +\delta \int_\Sigma |\pi_{\vec{n}}\nabla_{(2)}\vec{h}|^2 \gamma^p d\mu         \right]\nonumber
\\&\quad + c\|\vec{h}\|_{L^4,[\gamma>0]}^2\left[ c_\gamma^2\int_\Sigma |\vec{h}|^4 \gamma^{p-4} d\mu + \int_\Sigma |\vec{h}_0|^2 |\vec{h}|^4 \gamma^p d\mu     \right].\nonumber
\end{align}
\end{lem}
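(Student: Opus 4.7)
The plan is to bound each of the three LHS integrals by first applying Cauchy--Schwarz to extract the common prefactor $\|\vec{h}\|_{L^4,[\gamma>0]}^2$ and then controlling the remaining $L^2$ integral by the Michael--Sobolev inequality of Theorem \ref{sobolevineq}. Since each LHS integrand carries a natural $|\vec{h}|^2$ factor, Cauchy--Schwarz gives
\begin{align*}
\int_\Sigma |\vec{h}|^2\,Q\,d\mu \;\le\; \|\vec{h}\|_{L^4,[\gamma>0]}^2\left(\int_\Sigma Q^2\,d\mu\right)^{1/2},
\end{align*}
where $Q$ equals $|\pi_{\vec{n}}\nabla\vec{h}|^2\gamma^p$, $|\vec{h}|^2\gamma^p$, and $|\vec{h}|^4\gamma^p$ for the three LHS terms respectively. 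The inner integrals are precisely $\int_\Sigma u_i^4\,d\mu$ for the three test functions $u_1=|\pi_{\vec{n}}\nabla\vec{h}|\gamma^{p/2}$, $u_2=|\vec{h}|\gamma^{p/2}$ and $u_3=|\vec{h}|^2\gamma^{p/2}$.

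Next I would apply Theorem \ref{sobolevineq} with $m=4$, $r=2$ to each $u_i$. Since $\vec{h}=\vec{0}$ on $\partial\Sigma$, the boundary integral $\int_{\partial\Sigma}u_i^2\,d\sigma$ vanishes identically for $u_2$ and $u_3$; for $u_1$ it must be suppressed using the Codazzi--Mainardi identities together with $\pi_{\vec{\eta}}\nabla\vec{H}=\vec{0}$ from \eqref{bdry}. Expanding $|\nabla u_i|^2$ via the product rule and Kato's inequality produces a leading second-order piece plus a cutoff error of type $c_\gamma^2|\cdot|^2\gamma^{p-2}$, while the Sobolev mean-curvature contribution $|\vec{H}|^2 u_i^2$ is tamed by $|\vec{H}|\le|\vec{h}|$. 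The leading gradient pieces are $\int|\pi_{\vec{n}}\nabla_{(2)}\vec{h}|^2\gamma^p$ from $u_1$, $\int|\pi_{\vec{n}}\nabla\vec{h}|^2\gamma^p$ from $u_2$, and $\int|\vec{h}|^2|\pi_{\vec{n}}\nabla\vec{h}|^2\gamma^p$ from $u_3$; the last of these reproduces one of the LHS quantities but carries the small prefactor $\|\vec{h}\|_{L^4,[\gamma>0]}^2$ and is therefore absorbed back into the left-hand side.

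Two further moves complete the bookkeeping. The intermediate $\int|\pi_{\vec{n}}\nabla\vec{h}|^2\gamma^p$ produced by $u_2$ is promoted to a second-order expression by integrating by parts, $\int|\pi_{\vec{n}}\nabla\vec{h}|^2\gamma^p=-\int\vec{h}\cdot\Delta_\perp\vec{h}\,\gamma^p+\text{cutoff}$ (the boundary term vanishes since $\vec{h}|_{\partial\Sigma}=\vec{0}$), followed by Peter--Paul with a parameter $\delta$, yielding the $\delta\int|\pi_{\vec{n}}\nabla_{(2)}\vec{h}|^2\gamma^p$ contribution. The cutoff errors $c_\gamma^2|\vec{h}|^k\gamma^{p-2}$ (for $k\in\{2,4\}$) are handled by Young's inequality with the matched split $a=|\vec{h}|^{k/2}\gamma^{p/2}$, $b=c_\gamma^2|\vec{h}|^{k/2}\gamma^{(p-4)/2}$, producing the $c_\gamma^4\int|\vec{h}|^2\gamma^{p-4}$ and $c_\gamma^2\int|\vec{h}|^4\gamma^{p-4}$ terms on the right. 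Finally, the residual piece $|\vec{H}|^2 u_3^2=|\vec{H}|^2|\vec{h}|^4\gamma^p$ is rewritten via the trace decomposition $|\vec{h}|^2=|\vec{h}_0|^2+4|\vec{H}|^2$, which splits off the desired $\int|\vec{h}_0|^2|\vec{h}|^4\gamma^p$ contribution (together with a $|\vec{h}|^6\gamma^p$ leftover that is again absorbed by smallness).

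The main obstacle will be the interlocking absorption across the three coupled estimates: every Sobolev application reproduces at least one of the LHS quantities on the right, and all such cross-terms must be pushed back to the left under the smallness hypothesis $\|\vec{h}\|_{L^4,[\gamma>0]}^2\le\varepsilon$. A secondary but delicate point is the suppression of the boundary term in the Sobolev inequality applied to $u_1=|\pi_{\vec{n}}\nabla\vec{h}|\gamma^{p/2}$, since $\vec{h}|_{\partial\Sigma}=\vec{0}$ alone controls only the tangential derivatives of $\vec{h}$; the conormal contribution must be eliminated by invoking $\pi_{\vec{\eta}}\nabla\vec{H}=\vec{0}$ through the Codazzi--Mainardi identity.
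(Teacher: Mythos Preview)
Your approach is essentially the paper's: Cauchy--Schwarz to pull out the factor $\|\vec h\|_{L^4,[\gamma>0]}^2$, then Michael--Sobolev applied to $u=|\vec h|^2\gamma^{p/2}$ and $u=|\pi_{\vec n}\nabla\vec h|\gamma^{p/2}$, followed by an integration-by-parts/Peter--Paul step and absorption. The one substantive difference is where the $\delta\!\int|\pi_{\vec n}\nabla_{(2)}\vec h|^2\gamma^p$ and $c_\gamma^4\!\int|\vec h|^2\gamma^{p-4}$ terms come from. In the paper these arise not from your extra test function $u_2=|\vec h|\gamma^{p/2}$ but from promoting the cutoff error $c_\gamma^2\!\int|\pi_{\vec n}\nabla\vec h|^2\gamma^{p-2}$ produced by the Sobolev step on $u_1$: one integrates by parts and applies Peter--Paul to obtain
\[
cc_\gamma^2\!\int_\Sigma|\pi_{\vec n}\nabla\vec h|^2\gamma^{p-2}\,d\mu\ \le\ \delta\!\int_\Sigma|\pi_{\vec n}\nabla_{(2)}\vec h|^2\gamma^p\,d\mu\ +\ c(\delta)\,c_\gamma^4\!\int_\Sigma|\vec h|^2\gamma^{p-4}\,d\mu,
\]
which is exactly the paper's auxiliary inequality. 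Your $u_2$ route is workable but yields a residual $c(\delta)\!\int|\vec h|^2\gamma^p$ (no $c_\gamma$ factor) that does not literally match the stated right-hand side; routing the argument through the $u_1$ cutoff error instead gives the lemma as written and also covers a term you left implicit.
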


\noindent
\begin{proof}
We apply the Cauchy-Schwartz estimate and choose $u= |\vec{h}|^2 \gamma^{p/2}$ in Theorem \ref{sobolevineq} to have
\begin{align}
\int_\Sigma |\vec{h}|^4 \gamma^p d\mu  +\int_\Sigma |\vec{h}|^6 d\mu &\leq c\int_\Sigma |\vec{h}|^4 \gamma^p d\mu + \|\vec{h}\|_{L^4, [\gamma> 0]}^2 \left(\int_\Sigma |\vec{h}|^8 \gamma^{2p} d\mu \right)^{1/2}  \nonumber
\\& \leq c\int_\Sigma |\vec{h}|^4 \gamma^p d\mu + c\|\vec{h}\|_{L^4,[\gamma>0]}^2 \left[ \int_\Sigma |\vec{h}|^2|\pi_{\vec{n}}\nabla\vec{h}|^2 \gamma^p d\mu +  \int_\Sigma |\vec{h}|^4 \gamma^p d\mu \right. \nonumber
\\&\quad\quad \left.+ c_\gamma^2\int_\Sigma |\vec{h}|^4 \gamma^{p-4} d\mu  + \int_\Sigma |\vec{h}_0|^2|\vec{h}|^4 \gamma^p d\mu \right] . \label{bn1}
\end{align}

\noindent
 In a similar fashion, the Cauchy-Schwartz estimate and the Michael-Sobolev inequality (applied to $u=|\pi_{\vec{n}}\nabla\vec{h}|\gamma^{p/2}$ ) yield
\begin{align}
\int_\Sigma |\vec{h}|^2 |\pi_{\vec{n}}\nabla\vec{h}|^2 \gamma^p d\mu &\leq \|\vec{h}\|_{L^4,[\gamma >0]}^2 \left(\int_\Sigma|\pi_{\vec{n}}\nabla\vec{h}|^4 \gamma^{2p}  d\mu \right)^{1/2}             \nonumber
\\& \leq c\|\vec{h}\|_{L^4,[\gamma >0]}^2 \int_\Sigma ( |\pi_{\vec{n}}\nabla_{(2)}\vec{h}|^2 \gamma^p + c_\gamma^2 |\pi_{\vec{n}}\nabla\vec{h}|^2 \gamma^{p-2} +|\vec{h}|^2 |\pi_{\vec{n}}\nabla \vec{h}|^2 \gamma^p      ) d\mu \label{middle}
\end{align}
and we can estimate the middle term on the right hand side of \eqref{middle} via divergence theorem and the Peter-Paul inequality. We have for $\delta >0$,
\begin{align}
cc_\gamma^2 \int_\Sigma |\pi_{\vec{n}}\nabla\vec{h}|^2 \gamma^{p-2} d\mu &\leq \delta \int_\Sigma |\pi_{\vec{n}}\nabla_{(2)}\vec{h}|^2  \gamma^p d\mu + c(\delta)c_\gamma^4 \int_\Sigma |\vec{h}|^2 \gamma^{p-4} d\mu. \label{jk18}
\end{align}
The proof is finished by using \eqref{jk18} in \eqref{middle}, combining the result with \eqref{bn1} and absorbing.

\noindent
Our next task is to estimate the $c_\gamma^2$ term and the $U^i$ term on the right hand side of the inequality \eqref{noncy} in Lemma \ref{noncy1}. This is presented in the following lemma.
\begin{lem} \label{b1}
Let $\vec{\Phi}:\Sigma\rightarrow \mathbb{R}^m$ be an immersion of a 4-dimensional manifold $\Sigma$ whose boundaries satisfy the condition \eqref{bdry} and let $\gamma$ be the cut-off function in \eqref{cutoff} then 
\begin{align}
&cc_\gamma^2 \int_\Sigma |\pi_{\vec{n}}\nabla\vec{h}_0|^2 \gamma^{p-2} d\mu +2p\int_\Sigma U^i \nabla_i\gamma \gamma^{p-1} d\mu \nonumber
\\&\leq \delta \int_\Sigma |\pi_{\vec{n}}\nabla_{(2)}\nabla\vec{h}|^2 \gamma^p d\mu +\delta \int_\Sigma |\vec{h}|^2 |\pi_{\vec{n}}\nabla\vec{h}|^2 \gamma^p d\mu + c(\delta)c_\gamma^4 \int_\Sigma |\vec{h}|^2 \gamma^{p-4} d\mu.   \nonumber
\end{align}
\end{lem}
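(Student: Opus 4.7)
The plan is to handle the two quantities on the left-hand side separately, integrating by parts so as to trade factors of $c_\gamma$ for additional powers of $\gamma$ at the cost of a small $\delta$, and then applying the Simon-type identity \eqref{simonid} to convert normal Laplacians into the $|\pi_{\vec{n}}\nabla_{(2)}\vec{h}|^2$ appearing on the right.

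For the first term $cc_\gamma^2\int_\Sigma|\pi_{\vec{n}}\nabla\vec{h}_0|^2\gamma^{p-2}d\mu$, I apply the divergence theorem to the vector field $X^i:=\vec{h}_0\cdot\pi_{\vec{n}}\nabla^i\vec{h}_0\,\gamma^{p-2}$. The boundary integral vanishes since \eqref{bdry} forces $\vec{h}=\vec{0}$ and hence $\vec{h}_0=\vec{0}$ on $\partial\Sigma$, producing
\[
\int_\Sigma|\pi_{\vec{n}}\nabla\vec{h}_0|^2\gamma^{p-2}d\mu=-\int_\Sigma\vec{h}_0\cdot\Delta_\perp\vec{h}_0\,\gamma^{p-2}d\mu-(p-2)\int_\Sigma\vec{h}_0\cdot\pi_{\vec{n}}\nabla^i\vec{h}_0\,\gamma^{p-3}\nabla_i\gamma\,d\mu.
\]
On the first summand, Peter--Paul with weight $\varepsilon_1=\delta/(cc_\gamma^2)$ and the splitting $\gamma^{p-2}=\gamma^{p/2}\cdot\gamma^{(p-4)/2}$ yields control by $\delta|\Delta_\perp\vec{h}_0|^2\gamma^p+c(\delta)c_\gamma^4|\vec{h}_0|^2\gamma^{p-4}$ after multiplication by $cc_\gamma^2$; on the second, Peter--Paul with small weight $\varepsilon_2$ absorbs $|\pi_{\vec{n}}\nabla\vec{h}_0|^2\gamma^{p-2}$ back to the left while leaving $c_\gamma^2|\vec{h}_0|^2\gamma^{p-4}$. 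Invoking the Simon-type identity \eqref{simonid} then replaces $|\Delta_\perp\vec{h}_0|^2$ by $c|\pi_{\vec{n}}\nabla_{(2)}\vec{H}|^2$ plus sextic and quartic terms in $|\vec{h}|$, the latter absorbed via Lemma \ref{b2} and the pointwise bound $|\vec{h}_0|\le|\vec{h}|$.

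For the boundary flux term $2p\int_\Sigma U^i\nabla_i\gamma\,\gamma^{p-1}d\mu$, I bound $|U^i|$ summand by summand. Most of the terms in $U^i$ are at most first order in $\nabla\vec{H}$, so each contribution is of the schematic form $c_\gamma|\vec{h}|^a|\pi_{\vec{n}}\nabla\vec{h}|\gamma^{p-1}$ and is handled by
\[
c_\gamma|\vec{h}|^a|\pi_{\vec{n}}\nabla\vec{h}|\gamma^{p-1}\leq\delta|\vec{h}|^2|\pi_{\vec{n}}\nabla\vec{h}|^2\gamma^p+c(\delta)c_\gamma^2|\vec{h}|^{2a-2}\gamma^{p-2},
\]
followed by the Young-type bound $c_\gamma^2\gamma^{p-2}\leq\varepsilon\gamma^p+c(\varepsilon)c_\gamma^4\gamma^{p-4}$ to consolidate everything into the stated right-hand side. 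The sole third-order summand $\vec{H}\cdot\nabla^i\Delta_\perp\vec{H}$ in $U^i$ requires one further integration by parts:
\[
\int_\Sigma\vec{H}\cdot\nabla^i\Delta_\perp\vec{H}\,\gamma^{p-1}\nabla_i\gamma\,d\mu=-\int_\Sigma\Delta_\perp\vec{H}\cdot\pi_{\vec{n}}\nabla^i\vec{H}\,\gamma^{p-1}\nabla_i\gamma\,d\mu-\int_\Sigma\vec{H}\cdot\Delta_\perp\vec{H}\,\nabla^i(\gamma^{p-1}\nabla_i\gamma)\,d\mu,
\]
whose boundary contribution vanishes because $\vec{H}=\vec{0}$ on $\partial\Sigma$. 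Cauchy--Schwartz followed by a second use of the Simon-type identity to replace $|\Delta_\perp\vec{H}|^2$ by $c|\pi_{\vec{n}}\nabla_{(2)}\vec{h}|^2$ plus lower-order contributions then closes the estimate.

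The main obstacle is bookkeeping: the weights $\varepsilon_i$ chosen in each Peter--Paul step must be tuned so that the $cc_\gamma^2$ prefactor in front of the gradient term, together with the $c_\gamma$ generated by $|\nabla\gamma|\leq c_\gamma$ in the boundary flux (and the two extra $\gamma$-derivatives arising in the second integration by parts for the third-order summand), combine precisely into the target $c(\delta)c_\gamma^4|\vec{h}|^2\gamma^{p-4}$ while leaving only $\delta$-small multiples of $|\pi_{\vec{n}}\nabla_{(2)}\vec{h}|^2\gamma^p$ and $|\vec{h}|^2|\pi_{\vec{n}}\nabla\vec{h}|^2\gamma^p$ unabsorbed on the right.
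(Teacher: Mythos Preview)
Your overall strategy---integrate by parts, apply Peter--Paul with carefully tuned weights, and treat the third-order summand of $U^i$ by one further integration by parts---matches the paper's. The difference, and the source of a genuine gap, is your detour through the Simon-type identity \eqref{simonid}.

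After you integrate by parts in the first term and apply Peter--Paul, you arrive at $\delta\int_\Sigma|\Delta_\perp\vec{h}_0|^2\gamma^p\,d\mu$. At this point the paper (implicitly, in deriving \eqref{jk18}) simply uses the pointwise trace bound
\[
|\Delta_\perp\vec{h}_0|^2\;\leq\;c\,|\pi_{\vec{n}}\nabla_{(2)}\vec{h}_0|^2\;\leq\;c\,|\pi_{\vec{n}}\nabla_{(2)}\vec{h}|^2,
\]
which lands directly on the right-hand side of the lemma. You instead invoke \eqref{simonid}, which produces extra contributions of the schematic form $\delta\int_\Sigma|\vec{h}_0|^6\gamma^p\,d\mu$ and $\delta\int_\Sigma|\vec{H}|^2|\vec{h}_0|^4\gamma^p\,d\mu$. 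These sextic terms are \emph{not} controlled by the stated right-hand side, and appealing to Lemma~\ref{b2} to dispose of them is illegitimate here: Lemma~\ref{b2} bounds such terms only at the cost of a prefactor $\|\vec{h}\|_{L^4,[\gamma>0]}^2$ and further structural terms, none of which appear in the conclusion of Lemma~\ref{b1}. The same objection applies to your second use of \eqref{simonid} for $|\Delta_\perp\vec{H}|^2$; again the trivial trace bound $|\Delta_\perp\vec{H}|^2\leq c|\pi_{\vec{n}}\nabla_{(2)}\vec{h}|^2$ suffices and avoids the spurious terms.

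In short: replace every appeal to the Simon-type identity by the elementary pointwise inequality $|\Delta_\perp\vec{X}|^2\leq c|\pi_{\vec{n}}\nabla_{(2)}\vec{X}|^2$, drop the reference to Lemma~\ref{b2}, and your argument becomes essentially the paper's proof (which packages the first step as the pre-established inequality \eqref{jk18}).
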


\begin{proof}
We handle the $c_\gamma^2$ term by observing that $|\pi_{\vec{n}}\nabla\vec{h}_0|^2\leq c|\pi_{\vec{n}}\nabla\vec{h}|^2$ and using inequality \eqref{jk18}. We have 
\begin{align}
cc_\gamma^2 \int_\Sigma |\pi_{\vec{n}}\nabla\vec{h}_0|^2 \gamma^{p-2} d\mu &\leq \delta \int_\Sigma |\pi_{\vec{n}}\nabla_{(2)}\vec{h}|^2  \gamma^p d\mu + c(\delta)c_\gamma^4 \int_\Sigma |\vec{h}|^2 \gamma^{p-4} d\mu. \label{p1}
\end{align}
Next, we recall the expression for $U$. We have
\begin{align}
U^i&:=-\frac{1}{2}\Delta_\perp \vec{H}\cdot\nabla^i\vec{H} +\frac{1}{2} \vec{H}\cdot\nabla^i\Delta_\perp \vec{H} + J^i  \nonumber
\end{align}
where, for convenience, we have set 
\begin{align}
J^i&= \frac{1}{2}(\nabla^i\vec{H}\cdot\vec{h}_{jk})(\vec{H}\cdot\vec{h}^{jk})   -2(\nabla^j\vec{H}\cdot\vec{h}^i_j)|\vec{H}|^2 +\nabla^j|\vec{H}|^2(\vec{H}\cdot\vec{h}^i_j) \nonumber
\\& \quad + \frac{1}{2}\nabla^i((\vec{H}\cdot\vec{h}_{kj})\vec{h}^{kj}) \cdot\vec{H} +2 \nabla_j((\vec{H}\cdot\vec{h}^{ij})\vec{H})\cdot\vec{H}
-7\nabla^i(|\vec{H}|^2\vec{H})\cdot\vec{H}.  \nonumber
\end{align}
Using \eqref{jk18}, we find the estimate
\begin{align}
-\int_\Sigma\Delta_\perp \vec{H}\cdot\nabla^i\vec{H} \gamma^{p-1} \nabla_i \gamma d\mu \leq \delta \int_\Sigma |\pi_{\vec{n}}\nabla_{(2)}\vec{h}|^2  \gamma^p d\mu + c(\delta)c_\gamma^4 \int_\Sigma |\vec{h}|^2 \gamma^{p-4} d\mu. \label{p2}
\end{align}

\noindent
We note that
\begin{align}
\int_\Sigma\vec{H}\cdot\pi_{\vec{n}}\nabla\Delta_\perp  \vec{H} \gamma^{p-1} \nabla\gamma d\mu \leq c c_\gamma\int_\Sigma \vec{H}\cdot\pi_{\vec{n}}\nabla\Delta_\perp  \vec{H} \gamma^{p-1}  d\mu              \label{dfg1}
\end{align}
We use divergence theorem on the left hand side of \eqref{dfg1} to obtain the next estimate 
\begin{align}
\int_\Sigma\vec{H}\cdot\pi_{\vec{n}}\nabla\Delta_\perp  \vec{H} \gamma^{p-1} \nabla\gamma d\mu &\leq \delta\int_\Sigma |\pi_{\vec{n}}\nabla_{(2)}\vec{h}|^2 \gamma^p d\mu + cc_\gamma^2 \int_\Sigma |\pi_{\vec{n}}\nabla\vec{h}|^2 \gamma^{p-2} d\mu +c c_\gamma^4 \int_\Sigma |\vec{h}|^2 \gamma^{p-4} d\mu  \nonumber
\\& \overset{\eqref{jk18}}\leq  \delta\int_\Sigma |\pi_{\vec{n}}\nabla_{(2)}\vec{h}|^2 \gamma^p d\mu +c(\delta) c_\gamma^4 \int_\Sigma |\vec{h}|^2 \gamma^{p-4} d\mu.  \label{p3}
\end{align}
With the help of the Peter-Paul inequality we estimate 
\begin{align}
&\int_\Sigma J^i \nabla_i \gamma \gamma^{p-1} d\mu  \leq \delta \int_\Sigma |\vec{h}|^2 |\pi_{\vec{n}}\nabla\vec{h}|^2 \gamma^p d\mu +   \delta  \int_\Sigma |\vec{h}|^4 \gamma^p d\mu +              c(\delta) c_\gamma^4 \int_\Sigma |\vec{h}|^4 \gamma^{p-4} d\mu .\label{p4} 
\end{align}
The result then follows from the estimates \eqref{p1}, \eqref{p2}, \eqref{p3} and \eqref{p4}.

\end{proof}

\noindent
\begin{lem} \label{con1}
Let $\vec{\Phi}:\Sigma\rightarrow \mathbb{R}^m$ be an immersion of a 4-dimensional manifold $\Sigma$ whose boundaries satisfy the condition \eqref{bdry} and let $\gamma$ be the cut-off function in \eqref{cutoff} then 
\begin{align}
&\int_\Sigma (|\pi_{\vec{n}}\nabla_{(2)}\vec{h}|^2 +|\vec{h}|^2|\pi_{\vec{n}}\nabla\vec{h}|^2+|\vec{h}|^4|\vec{h}_0|^2  ) \gamma^p d\mu                                    \nonumber
\\& \leq c\int_\Sigma |\mathcal{\vec{W}}\cdot \vec{H}| \gamma^p d\mu+ cc_\gamma^4 \int_\Sigma |\vec{h}|^2 \gamma^{p-4} d\mu   \nonumber
 + cc_\gamma^4 \int_\Sigma |\vec{h}|^4 \gamma^{p-4} d\mu  \nonumber
\\& \quad+c\|\vec{h}\|_{L^4,[\gamma>0]}^2\left[   c_\gamma^4\int_\Sigma |\vec{h}|^2 \gamma^{p-4} d\mu +c_\gamma^2\int_\Sigma |\vec{h}|^4 \gamma^{p-4} d\mu        \right]\label{final}
\end{align}
\end{lem}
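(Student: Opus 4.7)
The plan is to chain Lemmas \ref{noncy1}, \ref{b2}, and \ref{b1} together and then absorb. Lemma \ref{noncy1} already produces the full left-hand side of \eqref{final} on its left-hand side, so what remains is to rewrite each term on its right-hand side either as one of the three summands we are trying to estimate (so that it can be absorbed) or as one of the surviving $|\mathcal{\vec{W}}\cdot\vec{H}|$ and $c_\gamma^k\int|\vec{h}|^{2,4}\gamma^{p-4}$ pieces that appear in \eqref{final}.

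I would first use the decomposition $\vec{h}_0=\vec{h}-g\vec{H}$ together with the pointwise estimates \eqref{est12} to replace $|\vec{h}_0|^2|\pi_{\vec{n}}\nabla\vec{h}_0|^2$ by $c|\vec{h}|^2|\pi_{\vec{n}}\nabla\vec{h}|^2$ and $|\vec{H}|^4$ by $c|\vec{h}|^4$ in the right-hand side of Lemma \ref{noncy1}. After this cosmetic step the right-hand side reduces (modulo the acceptable $\int|\mathcal{\vec{W}}\cdot\vec{H}|\gamma^p d\mu$ and $cc_\gamma^4\int|\vec{h}|^4\gamma^{p-4}\,d\mu$ terms) to the sum of $\int|\vec{h}|^2|\pi_{\vec{n}}\nabla\vec{h}|^2\gamma^p d\mu$, $\int|\vec{h}|^4\gamma^p d\mu$, $\int|\vec{h}|^6\gamma^p d\mu$, $cc_\gamma^2\int|\pi_{\vec{n}}\nabla\vec{h}_0|^2\gamma^{p-2}d\mu$ and $2p\int U^i\gamma^{p-1}\nabla_i\gamma d\mu$. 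The first three are then controlled by Lemma \ref{b2}, which rewrites them as $\|\vec{h}\|_{L^4,[\gamma>0]}^2$ multiplied by a combination of $\int|\pi_{\vec{n}}\nabla_{(2)}\vec{h}|^2\gamma^p d\mu$, $\int|\vec{h}_0|^2|\vec{h}|^4\gamma^p d\mu$ (both of which already sit on the left of \eqref{final}) and the two $c_\gamma^k$ terms that survive into the final bound. The remaining two are controlled by Lemma \ref{b1}, which trades them for $\delta\int|\pi_{\vec{n}}\nabla_{(2)}\vec{h}|^2\gamma^p d\mu+\delta\int|\vec{h}|^2|\pi_{\vec{n}}\nabla\vec{h}|^2\gamma^p d\mu+c(\delta)c_\gamma^4\int|\vec{h}|^2\gamma^{p-4}d\mu$, again a mixture of absorbable pieces and pieces already listed on the right-hand side of \eqref{final}.

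The last step is to collect and absorb all copies of $\int|\pi_{\vec{n}}\nabla_{(2)}\vec{h}|^2\gamma^p d\mu$, $\int|\vec{h}|^2|\pi_{\vec{n}}\nabla\vec{h}|^2\gamma^p d\mu$ and $\int|\vec{h}_0|^2|\vec{h}|^4\gamma^p d\mu$ on the right into the left, which works as long as $\|\vec{h}\|_{L^4,[\gamma>0]}^2$ and $\delta$ are both small enough that each absorbed coefficient is strictly less than its partner on the left. The smallness hypothesis $\int_\Sigma|\vec{h}|^4 d\mu\leq \varepsilon$ of Theorem \ref{mainr} controls the former, and $\delta$ is then chosen after $\varepsilon$. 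The only real obstacle is bookkeeping — fixing the two small parameters in the correct order so that every absorption is legitimate — since no additional analytic ingredient beyond the three preceding lemmas is needed.
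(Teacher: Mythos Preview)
Your proposal is correct and is exactly the approach the paper takes: the paper's own proof is a single line stating that the result follows by inserting the inequalities of Lemmas \ref{b2} and \ref{b1} into Lemma \ref{noncy1}. Your write-up simply spells out the absorption bookkeeping that the paper leaves implicit, including the observation that the smallness of $\|\vec{h}\|_{L^4,[\gamma>0]}$ (from the hypothesis of Theorem \ref{mainr}) is what makes the absorption of the $\int|\pi_{\vec{n}}\nabla_{(2)}\vec{h}|^2\gamma^p d\mu$ and $\int|\vec{h}_0|^2|\vec{h}|^4\gamma^p d\mu$ terms legitimate.
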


\noindent
\begin{proof}
The result is obtained by using the inequalities of Lemmas \ref{b2} and \ref{b1} in Lemma \ref{noncy1}.
\end{proof}

\noindent
Having obtained the estimates in Lemma \ref{con1}, we are now ready to conclude the proof of  Theorem \ref{mainr}. 
\vskip3mm
\noindent
{\bf Proof of Theorem \ref{mainr}}
The critical points of  $\mathcal{E}(\Sigma)$ satisfy $\mathcal{\vec{W}}=\vec{0}.$ We choose $p=4$ so that the left hand side of inequality \eqref{final} is bounded above with regards to the $L^2$ and $L^4$-norms of the second fundamental form which by assumption are bounded by $\varepsilon$. We then take $\rho \rightarrow \infty$ so that $c_\gamma \rightarrow 0$ and hence $|\vec{h}_0|^2=0$. Bearing in mind the boundary condition \eqref{bdry} we conclude that $\Sigma$ is an umbilic Willmore submanifold with totally geodesic boundary.

\end{proof}

\appendix
\section{Miscellaneous Computations}
Recall that the four dimensional Willmore invariant $\mathcal{\vec{W}}$ is a certain linear combination of the quantities $\mathcal{\vec{W}}_1$, $\mathcal{\vec{W}}_2$ and $\mathcal{\vec{W}}_3$ which were defined in Section \ref{will67}. The following lemma is a curvature identity  involving $\mathcal{\vec{W}}_1$. 
\begin{lem}\label{firstlem}
Let $\vec{\Phi}:\Sigma\rightarrow \mathbb{R}^m$ be an immersion of a 4-dimensional manifold $\Sigma$ whose boundaries satisfy the condition \eqref{bdry} and let $\gamma$ be the cut-off function in \eqref{cutoff} then 
\begin{align}
\frac{1}{2}\int_\Sigma|\Delta_\perp\vec{H}|^2 \gamma^p d\mu &=-\int_\Sigma \mathcal{\vec{W}}_1\cdot \vec{H} \gamma^p d\mu +\frac{1}{2} \int_\Sigma \pi_{\vec{n}}\nabla^j\vec{H}\cdot\nabla_j[(\vec{H}\cdot\vec{h}^{ik})\vec{h}_{ik}]
\gamma^p d\mu -2\int_\Sigma (\nabla^j\vec{H}\cdot\vec{h}^k_j)(\vec{H}\cdot\nabla_k\vec{H})\gamma^p d\mu  \nonumber
\\&\quad +2 \int_\Sigma (\vec{H}\cdot\vec{h}^k_j)(\pi_{\vec{n}}\nabla^j\vec{H}\cdot\nabla_k\vec{H})\gamma^p d\mu  +\int_\Sigma\left(2(\pi_{\vec{n}}\nabla_i\vec{H}\cdot\pi_{\vec{n}}\nabla_j\vec{H})\vec{h}^{ij}
-4|\pi_{\vec{n}}\nabla\vec{H}|^2\vec{H}\right)\cdot\vec{H} \gamma^p d\mu  \nonumber
\\&\quad +p\int_\Sigma \left(-\frac{1}{2}\Delta_\perp \vec{H}\cdot \nabla^i\vec{H}  +\frac{1}{2} \nabla^i\Delta_\perp \vec{H}\cdot\vec{H} +\frac{1}{2}(\nabla^i\vec{H}\cdot\vec{h}_{jk})(\vec{H}\cdot\vec{h}^{jk})  \right. \nonumber
\\&\quad\quad\quad\left. -2(\nabla^j\vec{H}\cdot\vec{h}^i_j)|\vec{H}|^2 +2(\vec{H}\cdot\vec{h}^i_j)(\vec{H}\cdot\nabla^j\vec{H})   \right)  \gamma^{p-1} \nabla_i\gamma d\mu     \label{fisg}
\end{align}
where $p>0$ is a constant.
\end{lem}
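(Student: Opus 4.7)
The plan is to compute $\int_\Sigma \mathcal{\vec{W}}_1\cdot\vec{H}\,\gamma^p\,d\mu$ by addressing each of the six summands defining $\mathcal{\vec{W}}_1$ separately, using integration by parts and the divergence theorem to shift derivatives off $\vec{H}$ until the desired right-hand side terms appear, while carefully retaining the cut-off derivatives $\gamma^{p-1}\nabla\gamma$ and the boundary integrals. Throughout I will use that since $\vec{H}$, $\Delta_\perp\vec{H}$, and the $\pi_{\vec{n}}$-decorated quantities are normal, dot products with them are insensitive to the tangential parts of $\nabla$, so expressions such as $\nabla_i\Delta_\perp\vec{H}\cdot\vec{H}$ equal $\pi_{\vec{n}}\nabla_i\Delta_\perp\vec{H}\cdot\vec{H}$, and likewise inside the cut-off term.

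The central step concerns the sixth-order piece. For $-\tfrac{1}{2}\Delta_\perp^2\vec{H}\cdot\vec{H}\,\gamma^p$, I would first integrate by parts once to obtain
\begin{align}
-\tfrac{1}{2}\int_\Sigma\Delta_\perp^2\vec{H}\cdot\vec{H}\,\gamma^p\,d\mu
&= -\tfrac{1}{2}\int_{\partial\Sigma}\pi_{\vec{\eta}}\nabla\Delta_\perp\vec{H}\cdot\vec{H}\,\gamma^p\,d\sigma + \tfrac{1}{2}\int_\Sigma\pi_{\vec{n}}\nabla^i\Delta_\perp\vec{H}\cdot\pi_{\vec{n}}\nabla_i\vec{H}\,\gamma^p\,d\mu \nonumber \\
&\quad + \tfrac{p}{2}\int_\Sigma\pi_{\vec{n}}\nabla^i\Delta_\perp\vec{H}\cdot\vec{H}\,\gamma^{p-1}\nabla_i\gamma\,d\mu, \nonumber
\end{align}
where the boundary integral vanishes because $\pi_{\vec{\eta}}\nabla\Delta_\perp\vec{H}=\vec{0}$ in \eqref{bdry}. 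A second integration by parts applied to the $\pi_{\vec{n}}\nabla^i\Delta_\perp\vec{H}\cdot\pi_{\vec{n}}\nabla_i\vec{H}$ integrand converts it into $-\tfrac{1}{2}\int_\Sigma|\Delta_\perp\vec{H}|^2\gamma^p\,d\mu$ together with a cut-off term $-\tfrac{p}{2}\int_\Sigma\Delta_\perp\vec{H}\cdot\nabla^i\vec{H}\,\gamma^{p-1}\nabla_i\gamma\,d\mu$, the boundary contribution this time vanishing by $\pi_{\vec{\eta}}\nabla\vec{H}=\vec{0}$. These two contributions produce exactly the $-\tfrac{1}{2}\int|\Delta_\perp\vec{H}|^2\gamma^p$ needed on the left of \eqref{fisg}, and supply the first two summands of the $U^i$ vector.

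For the remaining terms, the second summand $-\tfrac{1}{2}\Delta_\perp\vec{H}\cdot(\vec{H}\cdot\vec{h}^{ik})\vec{h}_{ik}\,\gamma^p$ requires just one IBP, yielding the $+\tfrac{1}{2}\int\pi_{\vec{n}}\nabla^j\vec{H}\cdot\nabla_j[(\vec{H}\cdot\vec{h}^{ik})\vec{h}_{ik}]\gamma^p\,d\mu$ term together with the cut-off contribution $\tfrac{p}{2}(\nabla^i\vec{H}\cdot\vec{h}_{jk})(\vec{H}\cdot\vec{h}^{jk})\gamma^{p-1}\nabla_i\gamma$, with the boundary term killed by $\pi_{\vec{\eta}}\nabla\vec{H}=\vec{0}$. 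The third and fourth summands of $\mathcal{\vec{W}}_1$ are already in divergence form $\pi_{\vec{n}}\nabla_k(\cdot)$, so pairing with $\vec{H}\gamma^p$ and applying the divergence theorem immediately produces the two terms $-2\int(\nabla^j\vec{H}\cdot\vec{h}^k_j)(\vec{H}\cdot\nabla_k\vec{H})\gamma^p\,d\mu$ and $+2\int(\vec{H}\cdot\vec{h}^k_j)(\pi_{\vec{n}}\nabla^j\vec{H}\cdot\nabla_k\vec{H})\gamma^p\,d\mu$ (via $\nabla_k\vec{H}\cdot\vec{H}=\pi_{\vec{n}}\nabla_k\vec{H}\cdot\vec{H}$) together with the last two cut-off contributions to $U^i$; here the boundary integrals carry factors of $\vec{h}$ and vanish because $\vec{h}=\vec{0}$ on $\partial\Sigma$. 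The final two terms of $\mathcal{\vec{W}}_1$, namely $2(\pi_{\vec{n}}\nabla_i\vec{H}\cdot\pi_{\vec{n}}\nabla_j\vec{H})\vec{h}^{ij}$ and $-4|\pi_{\vec{n}}\nabla\vec{H}|^2\vec{H}$, contain no derivatives that need to be shifted and are carried through unchanged.

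I expect the main bookkeeping obstacle to be ensuring that every boundary integral vanishes, which requires all three conditions in \eqref{bdry} to be invoked at different stages: $\pi_{\vec{\eta}}\nabla\Delta_\perp\vec{H}=\vec{0}$ and $\pi_{\vec{\eta}}\nabla\vec{H}=\vec{0}$ together kill the two boundary terms arising from the double IBP on $\Delta_\perp^2\vec{H}\cdot\vec{H}$ and the single IBP on $\Delta_\perp\vec{H}\cdot[(\vec{H}\cdot\vec{h}^{ik})\vec{h}_{ik}]$, while $\vec{h}=\vec{0}$ kills the boundary contributions generated by the divergence theorem on the two $\pi_{\vec{n}}\nabla_k(\cdot)$-type summands. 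Rearranging the identity $\int\mathcal{\vec{W}}_1\cdot\vec{H}\,\gamma^p\,d\mu$ to isolate $\tfrac{1}{2}\int|\Delta_\perp\vec{H}|^2\gamma^p\,d\mu$ on the left, and collecting the cut-off terms into the single expression for $U^i$ stated in the lemma, yields \eqref{fisg}.
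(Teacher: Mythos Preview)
Your proposal is correct and follows essentially the same approach as the paper: you integrate by parts twice on the $\Delta_\perp^2\vec{H}$ term (using $\pi_{\vec{\eta}}\nabla\Delta_\perp\vec{H}=\vec{0}$ and $\pi_{\vec{\eta}}\nabla\vec{H}=\vec{0}$), once on the $(\Delta_\perp\vec{H}\cdot\vec{h}_{ik})(\vec{H}\cdot\vec{h}^{ik})$ term, apply the divergence theorem to the two $\pi_{\vec{n}}\nabla_k(\cdot)$ summands, and carry through the remaining undifferentiated terms. Your explicit identification of which boundary condition in \eqref{bdry} kills which boundary integral is slightly more detailed than the paper's presentation, but the logic is identical.
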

\noindent
\begin{proof}
We integrate by parts and use the boundary condition \eqref{bdry} to obtain
\begin{align}
\int_{\Sigma}  \Delta_\perp^2\vec{H}\cdot\vec{H}\gamma^p d\mu&=  \int_{\Sigma} |\Delta_\perp \vec{H}|^2 \gamma^p d\mu +p\int_{\Sigma} (\Delta_\perp\vec{H}\cdot\nabla_i\vec{H}-  \nabla^i\Delta_\perp\vec{H}\cdot\vec{H}) \gamma^{p-1} \nabla_i\gamma  d\mu         \label{one}
\end{align}
and 
\begin{align}
\int_{\Sigma}( \Delta_\perp\vec{H}\cdot\vec{h}_{ik}) (\vec{H}\cdot\vec{h}^{ik}) \gamma^p d\mu &=   -\int_{\Sigma} \pi_{\vec{n}}\nabla^j\vec{H}\cdot\nabla_j[(\vec{H}\cdot\vec{h}^{ik})\vec{h}_{ik}] \gamma^p d\mu     -p\int_{\Sigma} (\pi_{\vec{n}}\nabla^j\vec{H}\cdot\vec{h}_{ik} )(\vec{H}\cdot\vec{h}^{ik})\gamma^{p-1}\nabla_j\gamma  d\mu.              \label{two}
\end{align}
Applying divergence theorem and the boundary condition \eqref{bdry} we have
\begin{align}
\int_{\Sigma} \pi_{\vec{n}}\nabla_k[(\nabla^j\vec{H}\cdot\vec{h}^k_j)\vec{H}]
\cdot\vec{H} \gamma^p d\mu&= -\int_{\Sigma} (\nabla^j\vec{H}\cdot\vec{h}^k_j)(\vec{H}\cdot\nabla_k\vec{H}) \gamma^p d\mu -p\int_\Sigma \nabla^j\vec{H}\cdot\vec{h}^k_j |\vec{H}|^2 \gamma^{p-1}\nabla_k \gamma  d\mu    \label{three}
\end{align}
and
\begin{align}
\int_\Sigma \pi_{\vec{n}} \nabla_k[(\vec{H}\cdot\vec{h}^k_j)\pi_{\vec{n}}\nabla^j\vec{H}]\cdot\vec{H}  
\gamma^p d\mu &=  -\int_\Sigma (\vec{H}\cdot\vec{h}^k_j)(\pi_{\vec{n}}\nabla^j\vec{H}\cdot\nabla_k\vec{H} ) \gamma^p d\mu      -p\int_{\Sigma} (\vec{H}\cdot\vec{h}_j^k)(\vec{H}\cdot\nabla^j\vec{H})\gamma^{p-1}\nabla_k\gamma  d\mu . \label{four}
\end{align}

\noindent
We multiply $\mathcal{\vec{W}}_1$ by $\vec{H}\gamma^p$ for some constant $p>0$, to have
\begin{align}
\int_{\Sigma}\mathcal{\vec{W}}_1\cdot\vec{H} \gamma^p d\mu &=    -\frac{1}{2} \int_\Sigma\Delta_\perp ^2\vec{H}   \cdot\vec{H} \gamma^p d\mu     
-\frac{1}{2} \int_{\Sigma}(\Delta_\perp \vec{H}\cdot\vec{h}_{ik}) (\vec{h}^{ik}\cdot\vec{H}) \gamma^p d\mu +2\int_\Sigma\pi_{\vec{n}} \nabla_k((\nabla^j\vec{H}\cdot\vec{h}_j^k)\vec{H})\cdot\vec{H} \gamma^p d\mu   \nonumber
\\& \quad-2\int_\Sigma\pi_{\vec{n}}\nabla_k((\vec{H}\cdot \vec{h}^k_j)\pi_{\vec{n}}\nabla^j\vec{H}) \cdot\vec{H} \gamma^p d\mu +\int_\Sigma\left(2(\pi_{\vec{n}}\nabla_i\vec{H}\cdot\pi_{\vec{n}}\nabla_j\vec{H})\vec{h}^{ij}
-4|\pi_{\vec{n}}\nabla\vec{H}|^2\vec{H}\right)\cdot\vec{H} \gamma^p d\mu  \label{meq}
\end{align}

\noindent
We can then obtain  \eqref{fisg} by substituting \eqref{one}, \eqref{two}, \eqref{three} and \eqref{four} into \eqref{meq}. 
\end{proof}

\noindent
We obtain a similar curvature identity involving $\mathcal{\vec{W}}_2$.
\begin{lem} \label{seclem}
Let $\vec{\Phi}:\Sigma\rightarrow \mathbb{R}^m$ be an immersion of a 4-dimensional manifold $\Sigma$ whose boundaries satisfy the condition \eqref{bdry} and let $\gamma$ be the cut-off function in \eqref{cutoff} then 
\begin{align}
\int_\Sigma \mathcal{\vec{W}}_2\cdot\vec{H} \gamma^p d\mu  &= -\frac{1}{2}\int_\Sigma\nabla^k((\vec{H}\cdot\vec{h}^{ij})\vec{h}_{ij})\cdot \pi_{\vec{n}}\nabla_k\vec{H} \gamma^p d\mu   -2\int_\Sigma\nabla_j((\vec{H}\cdot\vec{h}^{ij})\vec{H})\cdot\nabla_i\vec{H}
\gamma^p d\mu   \nonumber
\\&\quad\quad+ 4\int_\Sigma(\vec{H}\cdot\vec{h}_{ij})(\vec{H}\cdot\vec{h}^i_k) (\vec{H}\cdot\vec{h}^{jk})\gamma^p d\mu-4\int_\Sigma |\vec{H}\cdot\vec{h}|^2|\vec{H}|^2\gamma^p d\mu  \nonumber
\\&\quad\quad +\frac{1}{2}\int_\Sigma(\vec{H}\cdot\vec{h}^{ij})(\vec{h}_{kl}\cdot\vec{h}_{ij})( \vec{H}\cdot\vec{h}^{kl} ) \gamma^p d\mu \nonumber       
\\&-p\int_\Sigma \left(\frac{1}{2}\nabla^k((\vec{H}\cdot\vec{h}^{ij})\vec{h}_{ij})\cdot\vec{H}    +2 \nabla_j((\vec{H}\cdot\vec{h}^{kj})\vec{H})\cdot\vec{H} \right)\gamma^{p-1}\nabla_k\gamma d\mu  \label{sdfa2}
\end{align}
where $p>0$ is a constant.
\end{lem}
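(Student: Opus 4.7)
The plan is to compute $\int_\Sigma \vec{\mathcal{W}}_2\cdot\vec{H}\,\gamma^p\,d\mu$ term by term, treating $\vec{\mathcal{W}}_2$ in the five-term form
\begin{align*}
\vec{\mathcal{W}}_2 &= 4(\vec{H}\cdot\vec{h}_{ij})(\vec{H}\cdot\vec{h}^i_k)\vec{h}^{jk} - 4|\vec{H}\cdot\vec{h}|^2\vec{H} + \tfrac{1}{2}(\vec{H}\cdot\vec{h}^{ij})(\vec{h}_{kl}\cdot\vec{h}_{ij})\vec{h}^{kl} \\
&\quad + \tfrac{1}{2}\Delta_\perp((\vec{H}\cdot\vec{h}^{ij})\vec{h}_{ij}) + 2\pi_{\vec{n}}\nabla_i\nabla_j((\vec{H}\cdot\vec{h}^{ij})\vec{H})
\end{align*}
given in Section \ref{will67}. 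Dotting the first three (purely algebraic) terms with $\vec{H}$ converts the free normal index on the curvature tensor into an $(\vec{H}\cdot\vec{h})$-factor, producing verbatim the three triply-contracted quantities $4(\vec{H}\cdot\vec{h}_{ij})(\vec{H}\cdot\vec{h}^i_k)(\vec{H}\cdot\vec{h}^{jk})$, $-4|\vec{H}\cdot\vec{h}|^2|\vec{H}|^2$, and $\tfrac{1}{2}(\vec{H}\cdot\vec{h}^{ij})(\vec{h}_{kl}\cdot\vec{h}_{ij})(\vec{H}\cdot\vec{h}^{kl})$ that appear on the right-hand side of \eqref{sdfa2}. So the whole lemma reduces to integrating by parts on the final two derivative terms.

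For the Laplacian term, set $\vec{X} := (\vec{H}\cdot\vec{h}^{ij})\vec{h}_{ij}$. Since $\vec{H}$ is normal, $\nabla^k\vec{X}\cdot\vec{H} = \pi_{\vec{n}}\nabla^k\vec{X}\cdot\vec{H}$ and likewise $\pi_{\vec{n}}\nabla^k\vec{X}\cdot\nabla_k\vec{H} = \nabla^k\vec{X}\cdot\pi_{\vec{n}}\nabla_k\vec{H}$. Applying the divergence theorem to $\nabla_k(\pi_{\vec{n}}\nabla^k\vec{X}\cdot\vec{H}\,\gamma^p)$ yields
\begin{align*}
\tfrac{1}{2}\int_\Sigma \Delta_\perp\vec{X}\cdot\vec{H}\,\gamma^p\,d\mu &= -\tfrac{1}{2}\int_\Sigma \nabla^k\vec{X}\cdot\pi_{\vec{n}}\nabla_k\vec{H}\,\gamma^p\,d\mu -\tfrac{p}{2}\int_\Sigma \nabla^k\vec{X}\cdot\vec{H}\,\gamma^{p-1}\nabla_k\gamma\,d\mu,
\end{align*}
with the boundary contribution $\tfrac{1}{2}\int_{\partial\Sigma}\pi_{\vec{\eta}}\nabla\vec{X}\cdot\vec{H}\,\gamma^p\,d\sigma$ vanishing via \eqref{bdry}: when the derivative lands on one of the $\vec{h}$-factors inside $\vec{X}$ the remaining $\vec{h}$-factor kills the integrand, and when it lands on $\vec{H}$ one must invoke Codazzi-Mainardi to turn $\pi_{\vec{\eta}}\nabla\vec{h}$ into $\pi_{\vec{\eta}}\nabla\vec{H}$, which vanishes by hypothesis.

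For the remaining double-divergence term, set $\vec{Y}^{ij}:=(\vec{H}\cdot\vec{h}^{ij})\vec{H}$ and use normality of $\vec{H}$ to drop the outer $\pi_{\vec{n}}$. A single integration by parts in $\nabla_i$ then gives
\begin{align*}
2\int_\Sigma \pi_{\vec{n}}\nabla_i\nabla_j\vec{Y}^{ij}\cdot\vec{H}\,\gamma^p\,d\mu = -2\int_\Sigma \nabla_j\vec{Y}^{ij}\cdot\nabla_i\vec{H}\,\gamma^p\,d\mu -2p\int_\Sigma \nabla_j\vec{Y}^{ij}\cdot\vec{H}\,\gamma^{p-1}\nabla_i\gamma\,d\mu,
\end{align*}
the boundary integral $2\int_{\partial\Sigma}\vec{\eta}^i\nabla_j\vec{Y}^{ij}\cdot\vec{H}\,\gamma^p\,d\sigma$ again being zero after expanding $\nabla_j\vec{Y}^{ij}$, using $\vec{h}=\vec{0}$ on $\partial\Sigma$ to delete the two terms in which no derivative acts on $\vec{h}$, and using Codazzi-Mainardi to rewrite the surviving factor $\vec{H}\cdot\pi_{\vec{n}}\nabla_j\vec{h}^{ij}=4\vec{H}\cdot\pi_{\vec{n}}\nabla^i\vec{H}$, which is killed by $\pi_{\vec{\eta}}\nabla\vec{H}=\vec{0}$. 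Combining the three algebraic contributions, the Laplacian piece, and the double-divergence piece reproduces \eqref{sdfa2} exactly. The one step requiring genuine care is the boundary analysis for both derivative terms — all other computations are routine product-rule expansions of the gradients, paralleling Lemma \ref{firstlem}.
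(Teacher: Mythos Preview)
Your proof is correct and follows essentially the same route as the paper: dot the algebraic terms of $\vec{\mathcal{W}}_2$ with $\vec{H}$ directly, then integrate by parts once on each of the two derivative terms and kill the resulting boundary contributions using $\vec{h}=\vec{0}$ together with $\pi_{\vec{\eta}}\nabla\vec{H}=\vec{0}$ (after Codazzi). One minor remark: in your boundary analysis for the Laplacian piece, the case ``derivative lands on $\vec{H}$'' produces $(\nabla^k\vec{H}\cdot\vec{h}^{ij})\vec{h}_{ij}$, which already vanishes by the undifferentiated $\vec{h}$-factors without any appeal to Codazzi, so that step is simpler than you indicate.
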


\noindent
\begin{proof}
To prove \eqref{sdfa2}, we first use integration by parts to find
\begin{align}
\int_\Sigma \Delta_\perp [(\vec{H}\cdot\vec{h}^{ij})\vec{h}_{ij}]\cdot\vec{H} \gamma^p d\mu
&= \int_\Sigma  \nabla^k\left(\nabla_k((\vec{H}\cdot\vec{h}^{ij})\vec{h}_{ij})  \cdot\vec{H} \gamma^p\right) d\mu  -\int_{\Sigma} \nabla^k((\vec{H}\cdot\vec{h}^{ij})\vec{h}_{ij})\cdot\pi_{\vec{n}}\nabla_k\vec{H}
\gamma^p d\mu  \nonumber
\\&\quad\quad-p\int_\Sigma \nabla^k((\vec{H}\cdot\vec{h}^{ij})\vec{h}_{ij})\cdot\vec{H}\gamma^{p-1} \nabla_k\gamma d\mu.    \label{vad}
\end{align}
Observe that Codazzi equation yields
\begin{align}
\vec{H}\cdot\nabla^k((\vec{H}\cdot\vec{h}^{ij})\vec{h}_{ij})= (\nabla^k\vec{H}\cdot\vec{h}^{ij})(\vec{H}\cdot\vec{h}_{ij})+ (\vec{H}\cdot\nabla^i\vec{h}^{kj})(\vec{H}\cdot\vec{h}_{ij}) +(\vec{H}\cdot\vec{h}^{ij})(\vec{H}\cdot\nabla_i\vec{h}_{j}^k)  \label{vad1}
\end{align}
Substituting \eqref{vad1} into \eqref{vad}, applying divergence theorem and the condition \eqref{bdry} guarantees that the first term of \eqref{vad} vanishes.
In a similar fashion, one finds
\begin{align}
\int_\Sigma \pi_{\vec{n}}\nabla_i\nabla_j((\vec{H}\cdot\vec{h}^{ij})\vec{H})\cdot\vec{H} 
\gamma^p d\mu &= -\int_\Sigma \nabla_j((\vec{H}\cdot\vec{h}^{ij})\vec{H})\cdot \nabla_i\vec{H} \gamma^p d\mu   -p\int_\Sigma \nabla_j((\vec{H}\cdot\vec{h}^{ij})\vec{H})\cdot\vec{H} \nabla_i\gamma \gamma^{p-1}   \nonumber
\end{align}

\noindent
Integrating the dot product of $\mathcal{\vec{W}}_2$ and $\vec{H}\gamma^p$ yields
\begin{align}
\int_\Sigma \mathcal{\vec{W}}_2\cdot \vec{H}\gamma^p d\mu
&=     \frac{1}{2}\int_\Sigma\Delta_\perp ((\vec{H}\cdot\vec{h}^{ij})\vec{h}_{ij}) \cdot \vec{H}\gamma^p d\mu +2\int_\Sigma\pi_{\vec{n}}\nabla_i\nabla_j ((\vec{H}\cdot\vec{h}^{ij})\vec{H}) \cdot \vec{H}\gamma^p d\mu \nonumber
\\&\quad\quad+ 4\int_\Sigma(\vec{H}\cdot\vec{h}_{ij})(\vec{H}\cdot\vec{h}^i_k) (\vec{H}\cdot\vec{h}^{jk})\gamma^p d\mu-4\int_\Sigma |\vec{H}\cdot\vec{h}|^2|\vec{H}|^2\gamma^p d\mu  \nonumber
\\&\quad\quad +\frac{1}{2}\int_\Sigma(\vec{H}\cdot\vec{h}^{ij})(\vec{h}_{kl}\cdot\vec{h}_{ij})( \vec{H}\cdot\vec{h}^{kl} ) \gamma^p d\mu \nonumber
\end{align}

\end{proof}

\noindent
We obtain yet another curvature identity involving $\mathcal{\vec{W}}_3$.
\begin{lem} \label{thirdlem}
Let $\vec{\Phi}:\Sigma\rightarrow \mathbb{R}^m$ be an immersion of a 4-dimensional manifold $\Sigma$ whose boundaries satisfy the condition \eqref{bdry} and let $\gamma$ be the cut-off function in \eqref{cutoff} then 
\begin{align}
\int_\Sigma \mathcal{\vec{W}}_3\cdot\vec{H} \gamma^p d\mu
&=-\int_\Sigma \nabla^j(|\vec{H}|^2\vec{H})\cdot\pi_{\vec{n}}\nabla_j\vec{H} \gamma^p d\mu +\int_\Sigma|\vec{H}|^2|\vec{H}\cdot\vec{h}|^2\gamma^p d\mu -4\int_\Sigma|\vec{H}|^6 \gamma^p d\mu \nonumber
\\&\quad\quad  -p\int_\Sigma \nabla^j(|\vec{H}|^2\vec{H})\cdot\vec{H} \gamma^{p-1}\nabla_j\gamma  d\mu.   \label{sdfa22}
\end{align}
where $p>0$ is a constant.
\end{lem}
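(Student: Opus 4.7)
The plan is to unpack the definition
\[
\mathcal{\vec{W}}_3 \;=\; |\vec{H}|^2(\vec{H}\cdot\vec{h}_{ij})\vec{h}^{ij} \;-\; 4|\vec{H}|^4\vec{H} \;+\; \Delta_\perp(|\vec{H}|^2\vec{H}),
\]
take the dot product with $\vec{H}\gamma^p$, integrate over $\Sigma$, and treat the three resulting integrals separately. The first two are purely algebraic: since $\vec{H}$ is normal, $(\vec{H}\cdot\vec{h}^{ij})(\vec{h}_{ij}\cdot\vec{H})=|\vec{H}\cdot\vec{h}|^2$ gives the first term of the right-hand side of \eqref{sdfa22}, and the $-4|\vec{H}|^4\vec{H}\cdot\vec{H}=-4|\vec{H}|^6$ identity gives the second. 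All the real work therefore lies in the Laplacian term.

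For the Laplacian term, I would set $\vec{X}:=|\vec{H}|^2\vec{H}$ and integrate by parts once, writing
\[
\Delta_\perp \vec{X}\cdot\vec{H}\,\gamma^p \;=\; \nabla_j\bigl(\pi_{\vec{n}}\nabla^j\vec{X}\cdot\vec{H}\bigr)\gamma^p \;-\; \pi_{\vec{n}}\nabla^j\vec{X}\cdot\pi_{\vec{n}}\nabla_j\vec{H}\,\gamma^p,
\]
where the identity uses that $\vec{H}$ is normal, so $(\nabla_j\pi_{\vec{n}}\nabla^j\vec{X})\cdot\vec{H}=(\pi_{\vec{n}}\nabla_j\pi_{\vec{n}}\nabla^j\vec{X})\cdot\vec{H}=\Delta_\perp\vec{X}\cdot\vec{H}$, and $\pi_{\vec{n}}\nabla^j\vec{X}\cdot\nabla_j\vec{H}=\pi_{\vec{n}}\nabla^j\vec{X}\cdot\pi_{\vec{n}}\nabla_j\vec{H}$. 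After a second application of the divergence theorem to shift the $\gamma^p$ off the divergence, the remaining boundary term reduces to an integral of $\pi_{\vec{\eta}}\nabla^j\vec{X}\cdot\vec{H}$ along $\partial\Sigma$; expanding $\nabla(|\vec{H}|^2\vec{H})=2(\vec{H}\cdot\nabla\vec{H})\vec{H}+|\vec{H}|^2\nabla\vec{H}$ shows that every factor involves the conormal derivative $\pi_{\vec{\eta}}\nabla\vec{H}$, which vanishes by the boundary condition \eqref{bdry}. This yields
\[
\int_\Sigma\Delta_\perp(|\vec{H}|^2\vec{H})\cdot\vec{H}\,\gamma^p\,d\mu \;=\; -\int_\Sigma\nabla^j(|\vec{H}|^2\vec{H})\cdot\pi_{\vec{n}}\nabla_j\vec{H}\,\gamma^p\,d\mu \;-\; p\int_\Sigma\nabla^j(|\vec{H}|^2\vec{H})\cdot\vec{H}\,\gamma^{p-1}\nabla_j\gamma\,d\mu,
\]
where the normal projection has been dropped in the last two integrands because $\vec{H}$ is normal.

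Adding this to the two algebraic contributions produces exactly the right-hand side of \eqref{sdfa22}. I do not expect any serious obstacle; the single delicate point is confirming the vanishing of the boundary term, where one needs to check carefully that both the coefficient $\nabla^j|\vec{H}|^2=2\vec{H}\cdot\nabla^j\vec{H}$ and the factor $|\vec{H}|^2\pi_{\vec{n}}\nabla^j\vec{H}$ contract with the conormal to give terms proportional to $\pi_{\vec{\eta}}\nabla\vec{H}$, which is zero on $\partial\Sigma$. This step mirrors the analogous computations in Lemmas \ref{firstlem} and \ref{seclem}, so the same strategy transfers without modification.
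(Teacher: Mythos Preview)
Your proposal is correct and follows essentially the same route as the paper: the paper's proof consists of the single integration-by-parts identity
\[
\int_\Sigma \Delta_\perp (|\vec{H}|^2\vec{H})\cdot\vec{H}\,\gamma^p\,d\mu = -\int_\Sigma \nabla^j(|\vec{H}|^2\vec{H})\cdot\pi_{\vec{n}}\nabla_j\vec{H}\,\gamma^p\,d\mu - p\int_\Sigma \nabla^j(|\vec{H}|^2\vec{H})\cdot\vec{H}\,\gamma^{p-1}\nabla_j\gamma\,d\mu,
\]
justified by ``integration by parts, divergence theorem and \eqref{bdry}'', together with the two algebraic contractions you identified. Your write-up is in fact more explicit than the paper's in verifying that the boundary term vanishes.
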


\noindent
\begin{proof}
Identity \eqref{sdfa22} easily follows from 
\begin{align}
\int_\Sigma \Delta_\perp (|\vec{H}|^2\vec{H})\cdot\vec{H} \gamma^p d\mu =-\int_\Sigma \nabla^j(|\vec{H}|^2\vec{H})\cdot\pi_{\vec{n}}\nabla_j\vec{H} \gamma^p d\mu  -p\int_\Sigma \nabla^j(|\vec{H}|^2\vec{H})\cdot\vec{H}\gamma^{p-1} \nabla_j\gamma  d\mu  \nonumber
\end{align}
where we have used integration by parts, divergence theorem and \eqref{bdry}.
\end{proof}

\subsection{Interchaning Covariant Derivatives} \label{interchange}
By splitting into normal and tangential components, we compute
\begin{align}
g^{ij}\nabla_i\pi_{\vec{n}} \nabla_j\pi_{\vec{n}}\nabla_k\vec{H} &=g^{ij} \nabla_i\nabla_j\pi_{\vec{n}} \nabla_k\vec{H} - \nabla^j\pi_T\nabla_j\pi_{\vec{n}} \nabla_k\vec{H}        \nonumber
\\&= g^{ij} \nabla_i\nabla_j \nabla_k\vec{H} - \Delta\pi_{T} \nabla_k\vec{H} -\nabla^j\pi_T\nabla_j\pi_{\vec{n}} \nabla_k\vec{H} \nonumber   
\\&=   g^{ij} \nabla_i\nabla_k \nabla_j\vec{H} - \Delta\pi_{T} \nabla_k\vec{H} -\nabla^j\pi_T\nabla_j\pi_{\vec{n}} \nabla_k\vec{H}                   \nonumber
\\&=    g^{ij} \nabla_k\nabla_i \nabla_j\vec{H} +g^{ij} g^{el}R_{ikjl}\nabla_e\vec{H}- \Delta\pi_{T} \nabla_k\vec{H} -\nabla^j\pi_T\nabla_j\pi_{\vec{n}} \nabla_k\vec{H}                   \nonumber  
\\& =  \nabla_k\Delta\vec{H} +g^{ij} g^{el}R_{ikjl}\nabla_e\vec{H}- \Delta\pi_{T} \nabla_k\vec{H} -\nabla^j\pi_T\nabla_j\pi_{\vec{n}} \nabla_k\vec{H} .                                    \nonumber        
\end{align}

\noindent
By splitting into tangential and normal components, we compute
\begin{align}
\Delta_\perp\pi_{\vec{n}} \nabla_j \vec{h}_{kl}&=g^{ia}\pi_{\vec{n}} \nabla_i\pi_{\vec{n}} \nabla_a \pi_{\vec{n}} \nabla_j \vec{h}_{kl} = g^{ia}\pi_{\vec{n}} \nabla_i\pi_{\vec{n}} \nabla_a  \nabla_j \vec{h}_{kl}  - \pi_{\vec{n}} \nabla_i\pi_{\vec{n}} \nabla^i \pi_{T} \nabla_j \vec{h}_{kl}   \nonumber
\\& = g^{ia}\pi_{\vec{n}} \nabla_i \nabla_a  \nabla_j \vec{h}_{kl}- \pi_{\vec{n}} \nabla_i\pi_{T} \nabla^i \nabla_j \vec{h}_{kl}  - \pi_{\vec{n}} \nabla_i\pi_{\vec{n}} \nabla^i \pi_{T} \nabla_j \vec{h}_{kl}   \nonumber
\\& =     g^{ia} \nabla_i \nabla_a  \nabla_j \vec{h}_{kl}- \pi_{T} \Delta  \nabla_j \vec{h}_{kl}    - \pi_{\vec{n}} \nabla_i\pi_{T} \nabla^i \nabla_j \vec{h}_{kl}  - \pi_{\vec{n}} \nabla_i\pi_{\vec{n}} \nabla^i \pi_{T} \nabla_j \vec{h}_{kl}   \nonumber               
\\& = g^{ia}\nabla_j \nabla_i  \nabla_a \vec{h}_{kl} + (\vec{h}* \pi_{\vec{n}}\nabla\vec{h} * \vec{h} + \vec{h}*  \vec{h}*\pi_{\vec{n}}\nabla\vec{h} )_{jkl}   \nonumber
\\& \quad \quad - \pi_{T} \Delta  \nabla_j \vec{h}_{kl}    - \pi_{\vec{n}} \nabla_i\pi_{T} \nabla^i \nabla_j \vec{h}_{kl}  - \pi_{\vec{n}} \nabla_i\pi_{\vec{n}} \nabla^i \pi_{T} \nabla_j \vec{h}_{kl}   \nonumber       
\\&   =      g^{ia}\nabla_j \nabla_i \pi_{\vec{n}} \nabla_a \vec{h}_{kl}  +  \nabla_j \nabla_i \pi_{T} \nabla^i \vec{h}_{kl}+ (\vec{h}*\pi_{\vec{n}} \nabla\vec{h} * \vec{h} + \vec{h}*  \vec{h}*\pi_{\vec{n}}\nabla\vec{h} )_{jkl}   \nonumber
\\& \quad \quad - \pi_{T} \Delta  \nabla_j \vec{h}_{kl}    - \pi_{\vec{n}} \nabla_i\pi_{T} \nabla^i \nabla_j \vec{h}_{kl}  - \pi_{\vec{n}} \nabla_i\pi_{\vec{n}} \nabla^i \pi_{T} \nabla_j \vec{h}_{kl}   \nonumber   
\\&    =   \pi_{\vec{n}} \nabla_j \Delta_\perp \vec{h}_{kl} + \pi_{T} \nabla_j \Delta_\perp \vec{h}_{kl} +\nabla_j \pi_T \nabla_i \pi_{\vec{n}} \nabla^i \vec{h}_{kl}      +  \nabla_j \nabla_i \pi_{T} \nabla^i \vec{h}_{kl}  \nonumber
\\&   \quad\quad+ (\vec{h}* \pi_{\vec{n}}\nabla\vec{h} * \vec{h} + \vec{h}*  \vec{h}*\pi_{\vec{n}}\nabla\vec{h} )_{jkl}   \nonumber
\\& \quad \quad - \pi_{T} \Delta  \nabla_j \vec{h}_{kl}    - \pi_{\vec{n}} \nabla_i\pi_{T} \nabla^i \nabla_j \vec{h}_{kl}  - \pi_{\vec{n}} \nabla_i\pi_{\vec{n}} \nabla^i \pi_{T} \nabla_j \vec{h}_{kl}.  \nonumber
\end{align}

\end{document}